\newtheorem{teor}{Theorem}[section]
\newtheorem{cor}[teor]{Corollary}
\newtheorem{lem}[teor]{Lemma}
\newtheorem{prop}[teor]{Proposition}
\theoremstyle{definition}
\newtheorem{defn}[teor]{Definition}
\theoremstyle{remark}
\newtheorem{rem}[teor]{Remark}
\numberwithin{equation}{section}
\begin{document}

\title{Gradient flows of time-dependent functionals in metric spaces and applications for PDEs \vspace{0.5cm}}
\author{{{Lucas C. F. Ferreira} {\thanks{L. Ferreira was supported by FAPESP and CNPQ,
Brazil. (corresponding author)}}}\\{\small Universidade Estadual de Campinas, Departamento de Matem\'{a}tica,}\\{\small {\ CEP 13083-859, Campinas-SP, Brazil.}}\\{\small \texttt{E-mail:lcff@ime.unicamp.br}}\vspace{0.5cm}\\{{Julio C. Valencia-Guevara }{\thanks{J. Valencia-Guevara was supported by CNPQ, Brazil.}}}\\{\small Universidade Estadual de Campinas, Departamento de Matem\'{a}tica,}\\{\small {\ CEP 13083-859, Campinas-SP, Brazil.}}\\{\small \texttt{E-mail:julioguevara08@gmail.com}}}
\date{}
\maketitle

\begin{abstract}
We develop a gradient-flow theory for time-dependent functionals defined in
abstract metric spaces. Global well-posedness and asymptotic behavior of
solutions are provided. Conditions on functionals and metric spaces allow to
consider the Wasserstein space $\mathscr{P}_{2}(\mathbb{R}^{d})$ and apply the
results for a large class of PDEs with time-dependent coefficients like
confinement and interaction potentials and diffusion. Our results can be seen
as an extension of those in Ambrosio-Gigli-Savar\'{e} (2005)\cite{Ambrosio} to the case of time-dependent
functionals. For that matter, we need to consider some residual terms,
time-versions of concepts like $\lambda$-convexity, time-differentiability of
minimizers for Moreau-Yosida approximations, and a priori estimates with
explicit time-dependence for De Giorgi interpolation. Here, functionals can be
unbounded from below and satisfy a type of $\lambda$-convexity that changes
as the time evolves.

\

\

\noindent\textbf{AMS MSC2010:} 35R20, 34Gxx, 58Exx, 49Q20, 49J40, 35Qxx, 35K15, 60J60, 28A33. \vspace{0.1cm}

\noindent\textbf{Keywords:} Gradient flows, Optimal transport, Time-dependent
functionals, Measure solutions.

\end{abstract}

\

\

\section{Introduction}

\hspace{0.5cm}We consider the gradient flow equation%

\begin{align}
u^{\prime}(t)  &  =-\nabla\mathcal{E}(t,u(t)),\text{ }%
t>0,\label{eq:sist-grad-flow}\\
u(0)  &  =u_{0}, \label{eq:sist-grad-flow-initial-data}%
\end{align}
where $\mathcal{E}:[0,\infty)\times X\rightarrow(-\infty,\infty]$ is a
time-dependent functional and $(X,d)$ is a complete separable metric space.
Our aim is to construct a general theory in metric spaces that can be applied
for PDEs with time-dependent coefficients. In fact, with this theory in hand,
we obtain global-in-time existence and asymptotic behavior of solutions in the
Wasserstein space $\mathscr{P}_{2}(\Omega)$ for a number of PDEs with density of internal
energy $U$, confinement potential $V$ and interaction potential $W$ depending
on the time-variable. That space consists of probability measures on $\Omega$
with finite second moment endowed with the so-called Wasserstein metric
$\mathbf{d}_{2}(\mu,\nu)$. Here we will focus on the whole space
$\Omega=\mathbb{R}^{d}$.

Gradient flows theory has been successfully developed for the case of
time-independent functionals $\mathcal{E}(u)$ in general metric spaces $(X,d)$
(see \cite{De Giorgi-1},\cite{De Giorgi-2},\cite{Ambrosio},\cite{Ambrosio-2}%
,\cite{Marino-1}). Two basic tools in the theory are the concept of curves of
maximal slopes (see \cite{De Giorgi-2},\cite{Marino-1}) and a time-discrete
approximation scheme (see \cite{De Giorgi-1},\cite{Ambrosio-2}). The latter is
based on the implicit variational scheme
\begin{equation}
U_{\boldsymbol{\tau}}^{n}\in\underset{v\in X}{\text{Argmin}}\{\frac{1}{2\tau
}d^{2}(U_{\boldsymbol{\tau}}^{n-1},v)+\mathcal{E}(v)\}, \label{esquema-0}%
\end{equation}
where $\tau>0$ is a time step. Notice that (\ref{esquema-0}) consists in
finding minimizers for interactive values of the Moreau-Yosida approximation
$\mathscr{E}_{\tau}(u):=\inf_{v\in X}\{\frac{1}{2\tau}d^{2}(u,v)+\mathcal{E}%
(v)\}$ of $\mathcal{E}$ in $(X,d)$. Speak generally, basic hypotheses assumed
on $\mathcal{E}$ are lower semicontinuity and some type of convexity and
coercivity (see \cite{Ambrosio}). For the analysis of PDEs as a gradient
flows, a suitable metric space is $\mathscr{P}_{2}$ in which the above theory
has demonstrated to be particularly very fruitful. The idea of using the above
discrete scheme in $\mathscr{P}_{2}$ goes back to the work \cite{Jordan-Otto}
for the linear Fokker-Plank equation and \cite{Otto} for the porous medium
equation. Subsequently, several authors extended this approach to a general
class of continuity equations (see \cite{Ambrosio},\cite{Agueh}%
,\cite{Carrillo1}) with velocity field given by the gradient of the
variational derivative of a time-independent functional, namely
\begin{equation}
\frac{\partial u}{\partial t}=\mbox{div }\left(  u\nabla\frac{\delta
{\mathcal{E}}}{\delta u}\right),\; \mbox{in}\; (0,+\infty
)\times\mathbb{R}^{d}, \label{generalequations}%
\end{equation}
where $\mathcal{E}$ is the free energy associated
to PDE dealt with. Under some basic assumptions, they considered $\mathcal{E}$ with the form
\begin{equation}
{\mathcal{E}}[u]:=\int_{\mathbb{R}^{d}}U(u(x))\,dx+\int_{\mathbb{R}^{d}%
}u(x)\,V(x)\,dx+\frac{1}{2}\iint_{\mathbb{R}^{d}\times\mathbb{R}^{d}%
}W(x-y)\,u(x)\,u(y)\,dx\,dy, \label{generalfunctionals}%
\end{equation}
where $U:\mathbb{R}^{+}\rightarrow\mathbb{R}$ is the density of internal energy,
$V:\mathbb{R}^{d}\rightarrow\mathbb{R}$ is a confinement potential and
$W:\mathbb{R}^{d}\rightarrow\mathbb{R}$ is an interaction potential. The
functional (\ref{generalfunctionals}) has the classical form given by the sum
of the internal energy, potential energy and interaction energy functionals that is verified by
a wide number of physical models. Beside existence of global-in-time flows,
the literature contains results on uniqueness, global contraction, regularity,
and asymptotic stability of solutions (see e.g. \cite{Ambrosio}). We also
quote the paper \cite{Car-Fer-Pre} where a 1D non-local fluid mechanics model
with velocity coupled via Hilbert transform was analyzed by using gradient
flow theory in $\mathscr{P}_{2}$.

In \cite{Lisini}, the authors dealt with nonlinear diffusion equations in the
form
\[
\partial_{t}u-\text{div}(A(\nabla(f(u))+u\nabla V))=0,
\]
where $A$ is a symmetric matrix-valued function of the spatial variables
satisfying a uniform elliptic condition and $f$, $V$ are functions satisfying
suitable hypotheses. They also analyzed the contraction property for solutions.

On the other hand, from a theoretical and applied point of view, it is natural
to consider a time-dependence on the coefficients of some equations. For
instance, a version of the stochastic Fokker-Plank equation (the one
considered in \cite{Jordan-Otto}) is
\begin{equation}
dX_{t}=-\nabla V(t,X_{t})dt+\sqrt{2\kappa(t)}dB_{t}, \label{stoc-1}%
\end{equation}
where the term $\sqrt{2\kappa(t)}$ is known as the diffusion coefficient and
$B_{t}$ stands for the classical Brownian motion. For (\ref{stoc-1}), it is
well-known that the law of processes is modeled by the PDE
\[
\partial_{t}u=\kappa(t)\Delta u+\nabla\cdot(\nabla V(t,x)u).
\]
Another example is the version of the Mckean-Vlasov equation \cite{Veretennikov}
\[
dX_{t}=b(t,\mu_{t},X_{t})dt+\sqrt{2\kappa(t)}dB_{t},\text{ with }%
b(t,\mu,x)=-\nabla W(t,\cdot)\ast\mu,
\]
where $\mu_{t}$ is the law of the processes $X_{t}$ that obeys the PDE
\[
\partial_{t}u=\kappa\Delta u-\nabla\cdot(b(t,u,x)u)
\]
with $\kappa$ depending on the time $t$. The term $b(t,u,x)u$ corresponds to an
interaction between particles with time-dependent potential.

For a bounded convex domain $\Omega\subset\mathbb{R}^{d}$ and $0<T<\infty,$
Petrelli and Tudorascu \cite{Petrelli-Tudorasco} considered the
non-homogeneous Fokker-Plank equations
\begin{equation}
u_{t}-\nabla_{x}\cdot(u\nabla_{x}\psi(t,x))-\Delta_{x}(P(t,u))=g(t,x,u)\text{
in }\Omega\times(0,T) \label{aux-eq-1}%
\end{equation}
with Neumman boundary conditions and nonnegative $u_{0}\in L^{\infty}(\Omega)$
such that $\int u_{0}dx=1$. They proved existence of nonnegative bounded weak
solutions by constructing approximate solutions via time-interpolants of
minimizers arising from Wasserstein-type implicit schemes. \textbf{ }Let us
point out that, when $P(t,z)=\kappa(t)z$, the conditions in
\cite{Petrelli-Tudorasco} require that the viscosity $\kappa$ is bounded away
from zero, while here we allow $\kappa$ to be arbitrarily near zero (see
Theorem \ref{teor:kappa-time-depend} in subsection
\ref{subsect:variable-diffusion}).

In \cite{Savare}, Rossi, Mielke and Savar\'{e} analyzed the doubly nonlinear
evolution equation
\begin{equation}
\partial\psi(u^{\prime}(t))+\partial_{u}\mathcal{E}(t,u(t))\ni0\text{ in
}B^{\prime},\text{ a.e. }t\in(0,T), \label{aux-eq-2}%
\end{equation}
where $B$ is a separable Banach space, $0<T<\infty,$ and $u(0)=u_{0}.$ They
proposed a formulation for (\ref{aux-eq-2}) in a separable metric space
$(X,d)$ that extends the notion of curve of maximal slope for gradient flows
in metric spaces. Existence of solutions is proved by means of a time-discrete
approximation scheme in $(X,d)$ defined as
\begin{equation}
U_{\boldsymbol{\tau}}^{n}\in\underset{v\in X}{\text{Argmin}}\{\tau
\psi(d(U_{\boldsymbol{\tau}}^{n-1},v)/\tau)+\mathcal{E}(t_{n},v)\},
\label{esquema-1}%
\end{equation}
where $\boldsymbol{\tau}$ is a partition for $[0,T]$ and $\tau
=|\boldsymbol{\tau}|$ is the time step. Among others, the authors of
\cite{Savare} assumed that $\mathcal{E}$ satisfies the chain rule, is locally
(in time) uniformly bounded from below, and differentiable in the $t$-variable
with the derivative satisfying the condition%

\begin{equation}
|\partial_{t}\mathcal{E}(t,u)|\leq C(\mathcal{E}(t,u)+d(u^{\ast},u)+2C_{0}),
\label{aux-func-1}%
\end{equation}
for some $u^{\ast}\in X$, where
$$C_{0}=-\displaystyle\inf_{t\in\lbrack0,T],u\in X}\mathcal{E}(t,u).$$
In fact, functionals in \cite{Savare} are the sum of two
time-dependent functionals $\mathcal{E}_{1}$ and $\mathcal{E}_{2}$ where
$\mathcal{E}_{1}$ is bounded from below and $\lambda_{0}$-convex (uniformly
with respect to $t\in\lbrack0,T]$), and $\mathcal{E}_{2}$ is a dominated
concave perturbation of $\mathcal{E}_{1}.$ For a bounded domain $\Omega$ and
$u_{0}\in H_{0}^{1}(\Omega),$ using the above approach for $\partial
\psi(u^{\prime}(t))=u^{\prime}(t)$ (gradient flow case), they also analyzed
(\ref{eq:sist-grad-flow}) with $\partial_{u}\mathcal{E}(t,u)=-\Delta
u+F^{\prime}(u)-l(t)$ in the $L^{1}(\Omega)$-metric. These results were
improved in \cite{Mielke-Savare} by considering more general dissipation
$\psi$. Moreover, in \cite{Mielke-Savare} the condition (\ref{aux-func-1}) was
relaxed to $|\partial_{t}\mathcal{E}(t,u)|\leq C\mathcal{E}(t,u).$ We also
refer the reader to \cite{Mielke-Rossi-Savare-2, Roche-Rossi} for stability
results for doubly nonlinear equations in Banach spaces.

Since our functionals are not bounded from below neither satisfies a estimate
like (\ref{aux-func-1}), we can not to apply the theory from \cite{Savare} and
\cite{Mielke-Savare}. Here we assume the conditions \textbf{E1, E2, E3, E4
}and \textbf{E5 }given in Section \ref{set:preliminaries} (see pages 6 and 7).
Notice that \textbf{E4} gives some local-in-time control from below for
$\mathcal{E}$ but allows it to be unbounded from below at each $t>0$. In
(\ref{aux-func-1}) it is required some control of the time-derivative of
$\mathcal{E}$ in terms of the functional itself. Instead of such estimate, we
work with a condition on the difference of $\mathcal{E}$ in two different
times (see \textbf{E3}). In order to recover the contraction property,
inspired by the convexity used in \cite{Ambrosio}, we propose a type of
$\lambda$-convexity that changes as the time evolves (see \textbf{E5}). Thus,
functionals could \textquotedblleft lose convexity\textquotedblright\ in a
such way that the approximation between two solutions for large times still
holds, because the contraction property depends only on the mass accumulated
by $\lambda$, i.e. $\int_{0}^{t}\lambda(s)ds$. In general the function $\lambda(t)$ can be unbounded both from above and below in $[0,\infty)$ but, for the contraction, it is assumed to be continuous. In Section \ref{sect:applications-wasserstein}, we show how to extend
results for the case of ${\mathcal{E}}(t,u)$ having a more general density of internal
energy $U(t,u)$ and viscous term $-\Delta_{x}(P(t,u))$ (see Theorem
\ref{teor:internal energy} and Remark \ref{remark:flexibility} in subsection
\ref{sub-general}). There, the conditions on potentials prevent ${\mathcal{E}%
}(t,\rho)$ to satisfy \textbf{E3}. In the case $P(t,z)=\kappa(t)z$, the diffusion coefficient $\kappa$ is non-increasing. This condition is necessary in order to have the uniform limit of the approximate solutions (\ref{eq:sol-interpolante1}) in all finite interval
$[0,T]$.

Another application of time-dependent gradient flows appears in the context of
pursuit-evasion games. Jun \cite{Jun} considered gradient flows in suitable
playing fields and investigated existence and uniqueness of continuous pursuit
curves that are downward gradient curves for the distance from a moving
evader, i.e. a time-dependent gradient flows. In fact, his result works well
in $CAT(K)$-spaces (with $K=0$) that are complete metric spaces such that no
triangle is fatter than the triangle with same edge lengths in the model space
of constant curvature $K$. Also, he assumed that $\mathcal{E}(t,u)$ is
Lipschitz in $t,$ locally Lipschitz in $u,$ and $\lambda_{0}$-convex for all
$t>0$ where $\lambda_{0}$ is a fixed constant (i.e. $\lambda_{0}$-convex
uniformly in $t$). Another basic hypothesis used by him is that
$\mathscr{E}_{t,\tau}(u)$ given by
\[
\mathscr{E}_{t,\tau}(u):=\inf_{v\in X}\{\frac{1}{2\tau}d^{2}(u,v)+\mathcal{E}%
(t,v)\}
\]
is $C\tau$-Lipschitz in $t,$ for all $u\in X=$ $CAT(0)$ and $\tau>0,$ where
$C>0$ is a constant. For the time-independent case $\mathcal{E}(u)$, we refer
the reader to \cite{Mayer-1} for $X=$ $CAT(0)$ (see also \cite{Ambrosio}) and
\cite{Lytchak} for a geometric approach in\ $X=$ $CAT(K).$

In this paper we follow the program in the book \cite{Ambrosio} that contains
a relatively complete gradient flows theory in general metric spaces and its
applications for the non-vectorial space $\mathscr{P}_{2}$ by using optimal
transport tools. So, our results can be seen as an extension of those in
\cite{Ambrosio} in order to consider time-dependent functionals. For that
matter, due to time-dependence of $\mathcal{E}$, we need to handle some
residual terms (see e.g. (\ref{eq:residuo-G}) and the estimate
(\ref{eq:proposition-est-G})) and to consider time-versions of concepts like
$\lambda$-convexity (see \textbf{E5}) and interpolation functions as (\ref{eq:tempo-disc}) to
(\ref{eq:functional-interpolation}). One of these functions is the
interpolation (\ref{eq:lambda-discret}) that corresponds to the time-dependent
convexity parameter $\lambda(t)$. Thus, some adaptations from arguments in
\cite{Ambrosio} made here is not a straightforward matter and involves certain
care. Also, the time-differentiability of the minimizer for the Moreau-Yosida
approximation of $\mathcal{E}$ needs to be analyzed (see Proposition
\ref{prop:differentiability-property1}) and, in order to get the convergence
of the approximate solutions, a priori estimates with explicit dependence on the $t$-variable are performed in Proposition
\ref{prop:DiGiorgi-interp-converg} for which the aforementioned condition
\textbf{E3} plays a key role.

The plan of this paper is the following. In Section \ref{set:preliminaries} we
recall some concepts such as proper functional and local slope, and some
results on gradient flow theory in metric spaces. Also, we give the metric
formulation for
\eqref{eq:sist-grad-flow}-\eqref{eq:sist-grad-flow-initial-data} and the basic
assumptions for the functional $\mathcal{E}$. In Section
\ref{sect:construc-and-propert}, we construct the approximate solutions,
provide some properties for the minimizer of the Moreau-Yosida approximation,
and give estimates for approximate solutions. In Section
\ref{sect:apriori-estimates}, we derive \textit{a priori} estimates for the
approximate solutions and show their locally uniform convergence in
$[0,\infty)$. In Section \ref{sect:regularity}, we show that the curve, which
is the limit of the approximate solutions, is in fact a solution of
\eqref{eq:sist-grad-flow}-\eqref{eq:sist-grad-flow-initial-data} in the sense
of Section \ref{set:preliminaries} and obtain the contraction property for
solutions. Section \ref{sect:applications-wasserstein} is devoted to applying
the general theory in the Wasserstein space for PDEs with time-dependent
functionals as those mentioned above.

\section{Metric formulation and implicit scheme}

\label{set:preliminaries}\hspace{0.5cm}Let $(X,d)$ be a complete separable
metric space and consider the functional $\mathcal{E}:X\rightarrow(-\infty,+\infty]$. Recall that $\mathcal{E}$ is said to be proper whether there is
$u_{0}\in X$ such that $\mathcal{E}(u_{0})<\infty$, and its domain is defined by
\begin{equation}
\text{Dom}(\mathcal{E})=\{u\in X:\mathcal{E}(u)<\infty\}. \label{eq:domain}%
\end{equation}
Thus, a functional $\mathcal{E}$ is proper when $\text{Dom}(\mathcal{E}%
)\neq\emptyset$. Let $f^{+}$ and $f^{-}$ denote the positive and negative
parts of an extended real-valued function $f$. The following concept is
crucial in the theory of gradient flows, and we will use it for the case of
time-dependent functionals.

\begin{defn}
\label{defn:local-slope} Let $\mathcal{E}$ be a proper functional in a metric
space $X$. The \textit{local slope }$|\partial\mathcal{E}|$ of $\mathcal{E}$
at the point $u\in X$ is defined as
\begin{equation}
|\partial\mathcal{E}|(u)=\limsup_{v\rightarrow u}\frac{(\mathcal{E}%
(u)-\mathcal{E}(v))^{+}}{d(u,v)}. \label{eq:local-slope}%
\end{equation}

\end{defn}

In what follows, we recall a technical lemma that will be useful in our calculations.

\begin{lem}
[{\cite[Lemma 2.2.1]{Ambrosio}}]\label{lem:ambrosio-desig} Let $\mathcal{E}%
:X\rightarrow(\infty,\infty]$ be a functional such that there is $\tau^{\ast
}>0$ and $u^{\ast}\in X$ with
\[
\mathscr{E}_{\tau^{\ast}}(u^{\ast}):=\inf_{v\in X}\left\{  \mathcal{E}%
(v)+\frac{d^{2}(v,u^{\ast})}{2\tau^{\ast}}\right\}  >-\infty.
\]
Then
\[
\mathscr{E}_{\tau}(u)\geq\mathscr{E}_{\tau^{\ast}}(u^{\ast})-\frac{1}%
{\tau^{\ast}-\tau}d^{2}(u^{\ast},u),\text{ for all }0<\tau<\tau^{\ast}\text{
and }u\in X,
\]
and
\[
d^{2}(u,v)\leq\frac{4\tau^{\ast}\tau}{\tau^{\ast}-\tau}\left(  \mathcal{E}%
(v)+\frac{d^{2}(u,v)}{2\tau}-\mathscr{E}_{\tau^{\ast}}(u^{\ast})+\frac{1}%
{\tau^{\ast}-\tau}d^{2}(u^{\ast},u)\right)  .
\]
In particular, the sub-levels of the map $v\rightarrow\mathcal{E}%
(v)+\frac{d^{2}(u,v)}{2\tau}$ are bounded.
\end{lem}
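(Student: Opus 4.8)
The plan is to follow the classical argument for Lemma 2.2.1 in \cite{Ambrosio}, which is an elementary manipulation of the defining inequalities for the Moreau--Yosida infimum. First I would fix $0<\tau<\tau^{\ast}$ and $u\in X$, and start from the trivial observation that for any $v\in X$,
\[
\mathcal{E}(v)+\frac{d^{2}(v,u^{\ast})}{2\tau^{\ast}}\geq\mathscr{E}_{\tau^{\ast}}(u^{\ast})>-\infty .
\]
Hence $\mathcal{E}(v)\geq\mathscr{E}_{\tau^{\ast}}(u^{\ast})-\frac{1}{2\tau^{\ast}}d^{2}(v,u^{\ast})$, so that
\[
\mathcal{E}(v)+\frac{d^{2}(v,u)}{2\tau}\;\geq\;\mathscr{E}_{\tau^{\ast}}(u^{\ast})+\frac{d^{2}(v,u)}{2\tau}-\frac{d^{2}(v,u^{\ast})}{2\tau^{\ast}} .
\]
The key step is to control the quadratic expression $\frac{1}{2\tau}d^{2}(v,u)-\frac{1}{2\tau^{\ast}}d^{2}(v,u^{\ast})$ from below. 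Using the triangle inequality $d(v,u^{\ast})\leq d(v,u)+d(u,u^{\ast})$ together with the elementary inequality $(a+b)^{2}\leq(1+\varepsilon)a^{2}+(1+\varepsilon^{-1})b^{2}$ — equivalently a Young-type estimate — one obtains a bound of the form
\[
\frac{d^{2}(v,u)}{2\tau}-\frac{d^{2}(v,u^{\ast})}{2\tau^{\ast}}\;\geq\;c_{1}\,d^{2}(v,u)-c_{2}\,d^{2}(u,u^{\ast})
\]
with $c_{1}>0$ precisely because $\tau<\tau^{\ast}$; choosing the free parameter $\varepsilon$ optimally (namely $\varepsilon=\tau^{\ast}/\tau-1$) yields the sharp constants, giving $\frac{1}{2\tau}d^{2}(v,u)-\frac{1}{2\tau^{\ast}}d^{2}(v,u^{\ast})\geq -\frac{1}{2(\tau^{\ast}-\tau)}d^{2}(u,u^{\ast})$. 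Taking the infimum over $v$ in the resulting inequality produces exactly
\[
\mathscr{E}_{\tau}(u)\;\geq\;\mathscr{E}_{\tau^{\ast}}(u^{\ast})-\frac{1}{\tau^{\ast}-\tau}\,d^{2}(u^{\ast},u),
\]
which is the first claim (the factor is $\frac1{\tau^\ast-\tau}$ rather than $\frac1{2(\tau^\ast-\tau)}$, absorbing a harmless slack).

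For the second inequality I would not take the infimum but instead keep $v$ generic and solve the chain of inequalities for $d^{2}(u,v)$. Starting again from $\mathcal{E}(v)+\frac{d^{2}(v,u^{\ast})}{2\tau^{\ast}}\geq\mathscr{E}_{\tau^{\ast}}(u^{\ast})$ and inserting $d^{2}(v,u^{\ast})\leq 2d^{2}(v,u)+2d^{2}(u,u^{\ast})$, rearrange to isolate $\mathcal{E}(v)+\frac{d^2(u,v)}{2\tau}$ on one side and a positive multiple of $d^{2}(u,v)$ on the other; tracking the coefficient of $d^{2}(u,v)$, which is positive since $\frac{1}{2\tau}-\frac{1}{\tau^{\ast}}>0$, and dividing through gives the stated estimate
\[
d^{2}(u,v)\leq\frac{4\tau^{\ast}\tau}{\tau^{\ast}-\tau}\left(\mathcal{E}(v)+\frac{d^{2}(u,v)}{2\tau}-\mathscr{E}_{\tau^{\ast}}(u^{\ast})+\frac{1}{\tau^{\ast}-\tau}d^{2}(u^{\ast},u)\right).
\]
The boundedness of the sub-levels of $v\mapsto\mathcal{E}(v)+\frac{d^{2}(u,v)}{2\tau}$ is then immediate: if that map is $\leq c$ on a set, the right-hand side above is bounded by a constant depending only on $c,\tau,\tau^{\ast},u,u^{\ast}$, hence $d(u,v)$ is bounded there.

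The only real care needed — the ``main obstacle,'' such as it is — is bookkeeping the constants in the Young inequality so that the coefficients come out exactly as stated; there is no conceptual difficulty, since $(X,d)$ is only a metric space and all we use is the triangle inequality and nonnegativity of squares. One should also note at the outset that $\tau^{\ast}-\tau>0$ is what makes every sign work, and that the hypothesis $\mathscr{E}_{\tau^{\ast}}(u^{\ast})>-\infty$ is exactly what prevents the right-hand sides from being vacuous. I expect the whole argument to be three or four displayed lines once the constants are pinned down.
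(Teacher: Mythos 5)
The paper itself gives no proof of this lemma (it is quoted verbatim from \cite[Lemma 2.2.1]{Ambrosio}), so your proposal can only be measured against the standard argument. Your first half is correct: the parametrized Young inequality with $\varepsilon=\tau^{\ast}/\tau-1$ gives $\frac{d^{2}(v,u)}{2\tau}-\frac{d^{2}(v,u^{\ast})}{2\tau^{\ast}}\geq-\frac{1}{2(\tau^{\ast}-\tau)}d^{2}(u,u^{\ast})$, which after taking the infimum over $v$ yields the first estimate (with an even better constant). The second half, however, has a genuine gap. You replace the free parameter by the crude split $d^{2}(v,u^{\ast})\leq 2d^{2}(v,u)+2d^{2}(u,u^{\ast})$ and then assert that the resulting coefficient of $d^{2}(u,v)$, namely $\frac{1}{2\tau}-\frac{1}{\tau^{\ast}}$, ``is positive since $\tau<\tau^{\ast}$.'' That is false for $\tau^{\ast}/2\leq\tau<\tau^{\ast}$, where the coefficient is $\leq 0$ and nothing can be solved for $d^{2}(u,v)$; and even on the range $\tau<\tau^{\ast}/2$ the constant you get is $\frac{2\tau\tau^{\ast}}{\tau^{\ast}-2\tau}$, which exceeds the stated $\frac{4\tau\tau^{\ast}}{\tau^{\ast}-\tau}$ unless $\tau\leq\tau^{\ast}/3$. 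So as written the second displayed inequality is not established on the full range $0<\tau<\tau^{\ast}$.

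The repair is to keep the same parametrized inequality you used in the first half but spend only \emph{half} of the slack: with $\varepsilon=\frac{\tau^{\ast}-\tau}{2\tau}$ one gets, for all $v$,
\begin{equation*}
\frac{d^{2}(v,u)}{2\tau}-\frac{d^{2}(v,u^{\ast})}{2\tau^{\ast}}
\;\geq\;\frac{\tau^{\ast}-\tau}{4\tau\tau^{\ast}}\,d^{2}(v,u)-\frac{\tau^{\ast}+\tau}{2\tau^{\ast}(\tau^{\ast}-\tau)}\,d^{2}(u,u^{\ast})
\;\geq\;\frac{\tau^{\ast}-\tau}{4\tau\tau^{\ast}}\,d^{2}(v,u)-\frac{1}{\tau^{\ast}-\tau}\,d^{2}(u,u^{\ast}),
\end{equation*}
so that $\mathcal{E}(v)+\frac{d^{2}(v,u)}{2\tau}\geq\mathscr{E}_{\tau^{\ast}}(u^{\ast})+\frac{\tau^{\ast}-\tau}{4\tau\tau^{\ast}}d^{2}(v,u)-\frac{1}{\tau^{\ast}-\tau}d^{2}(u,u^{\ast})$. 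This single inequality gives both conclusions at once: dropping the nonnegative middle term and taking the infimum over $v$ gives the first estimate with the stated constant $\frac{1}{\tau^{\ast}-\tau}$, and rearranging gives exactly $d^{2}(u,v)\leq\frac{4\tau^{\ast}\tau}{\tau^{\ast}-\tau}\bigl(\mathcal{E}(v)+\frac{d^{2}(u,v)}{2\tau}-\mathscr{E}_{\tau^{\ast}}(u^{\ast})+\frac{1}{\tau^{\ast}-\tau}d^{2}(u^{\ast},u)\bigr)$ for every $0<\tau<\tau^{\ast}$; the boundedness of sub-levels then follows as you say.
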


\subsection{Metric formulation}

\label{subsect:metric-formulation}\hspace{0.5cm}Let $\mathcal{E}:[0,+\infty)\times
X\rightarrow(-\infty,+\infty]$ be a time-dependent functional. It is well known that the
problem (\ref{eq:sist-grad-flow})-(\ref{eq:sist-grad-flow-initial-data})
admits a metric reformulation by using the concept of local slope (see
\cite{Savare}). This is given by the variational inequality
\begin{equation}
\frac{d}{dt}(\mathcal{E}(t,u(t)))\leq\partial_{t}\mathcal{E}(t,u(t))-\frac
{1}{2}|\partial\mathcal{E}(t)|^{2}(u(t))-\frac{1}{2}|u^{\prime}|^{2}%
(t),\label{eq:variational-inequality}%
\end{equation}
where $|\partial\mathcal{E}(t)|$ stands for the local slope of the functional
$u\rightarrow\mathcal{E}(t,u)$, for each fixed $t>0$, and
\[
|u^{\prime}|(t)\lim_{s\rightarrow t}\frac{d(u(s),u(t))}{|t-s|}%
\]
stands for the metric derivative of an absolutely continuous curve $u$.

Below we state the principal assumptions on the family of functionals
$\mathcal{E}(t,\cdot)$ on $X$, for $t\in\lbrack0,\infty)$:

\begin{description}
\item[E1.-] For each $t\geq0$, $\mathcal{E}(t,\cdot)$ is proper and lower
semicontinuous with respect to the metric $d(\cdot,\cdot)$.

\item[E2.-] The domain of the functionals, $\mathbf{D}:=\text{Dom}%
(\mathcal{E}(t,\cdot))$, is time-independent.

\item[E3.-] There exist $u^{\ast}\in X$ and a function $\beta:[0,\infty
)\rightarrow\lbrack0,\infty)$ with $\beta\in L_{loc}^{1}([0,\infty))$ such
that, for each $u\in\mathbf{D}$, the function $t\rightarrow\mathcal{E}(t,u)$
satisfies
\begin{equation}
|\mathcal{E}(t,u)-\mathcal{E}(s,u)|\leq\int_{s}^{t}{\beta(r)\ dr}%
(1+d^{2}(u,u^{\ast})). \label{eq:E3-hypothesis}%
\end{equation}

\end{description}

Note that if the condition (\ref{eq:E3-hypothesis}) is valid for some
$u^{\ast}\in X$ then it is in fact valid for all $u^{\ast}\in X$. Also, for each $u\in\mathbf{D}$, the function $t\rightarrow\mathcal{E}(t,u)$ is differentiable a.e. in $[0,\infty)$ and its set of differentiability
points may depend on $u$.

Now we are ready to give the notion of solution for (\ref{eq:sist-grad-flow}%
)-(\ref{eq:sist-grad-flow-initial-data}) that we deal with.

\begin{defn}
\label{defn:def-solution} Let $u_{0}\in X$ and $\mathcal{E}:[0,+\infty)\times
X\rightarrow(-\infty,+\infty]$ be a functional satisfying the assumptions
\textbf{E1}, \textbf{E2} and \textbf{E3}. We say that an absolutely continuous
curve $u:[0,+\infty)\rightarrow X$ is a solution for (\ref{eq:sist-grad-flow}%
)-(\ref{eq:sist-grad-flow-initial-data}), if $u(0)=u_{0}$, the function
$t\rightarrow\mathcal{E}(t,u(t))$ is absolutely continuous,
\begin{equation}
|u^{\prime}|,\ |\partial\mathcal{E}(\cdot)|(u(\cdot))\in L_{loc}^{2}%
([0,\infty)), \label{eq:def-solution-1}%
\end{equation}
and the variational inequality (\ref{eq:variational-inequality}) holds true.
\end{defn}

\subsection{Implicit variational scheme}

\label{sucsect:euler-scheme}\hspace{0.5cm} We start by recalling the Moreau-Yosida approximation of $\mathcal{E}$. For $\tau>0$ and $t\geq0$, this approximation is defined as
\begin{equation}
\mathscr{E}_{t,\tau}(u):=\inf_{v\in X}\{\mathbf{E}(t,\tau,u;v)\},
\label{eq:moreau-yosida-approx1}%
\end{equation}
where the functional $\mathbf{E}(t,\tau,u;\cdot)$ is given by
\begin{equation}
\mathbf{E}(t,\tau,u;v):=\mathcal{E}(t,v)+\frac{d^{2}(u,v)}{2\tau}.
\label{eq:moreau-yosida-approx}%
\end{equation}

Next, take a partition $\boldsymbol{\tau}=\{0=t_{\boldsymbol{\tau}}^{0}<t_{\boldsymbol{\tau}}%
^{1}<\cdots<t_{\boldsymbol{\tau}}^{n}<\cdots\}$ of $[0,\infty)$ with
$\lim_{n\rightarrow\infty}t_{\boldsymbol{\tau}}^{n}=\infty$. Defining the
step size $\tau_{n}:=t_{\boldsymbol{\tau}}^{n}-t_{\boldsymbol{\tau}}^{n-1}$, one can construct the sequence
\begin{equation}
U_{\boldsymbol{\tau}}^{n}\in\underset{v\in X}{\text{Argmin}}\{\mathbf{E}%
(t_{\boldsymbol{\tau}}^{n},\tau_{n},U_{\boldsymbol{\tau}}^{n-1};v)\},
\label{eq:minimization-problem}%
\end{equation}
for a given family of initial data $U_{\boldsymbol{\tau}}^{0}\in X$.

Since the convergence results are locally in time, we can fix $T>0$ arbitrary and
analyze the convergence in $[0,T]$. In order to analyze rigorously the problem
of minimization (\ref{eq:minimization-problem}), we give two additional
assumptions that will allow to obtain uniqueness and a nice behavior of the minimizers.

\begin{description}
\item[E4.-] For each $T>0$, there exist a $u^{\ast}\in X$ and $\tau^{\ast
}(T)=\tau^{\ast}>0$ such that the function $t\rightarrow\mathscr{E}_{t,\tau
^{\ast}}(u^{\ast})$ is bounded from below in $[0,T]$.

\item[E5.-] \label{E5} There is a function $\lambda:[0,\infty)\rightarrow
\mathbb{R}$ in $L_{loc}^{\infty}([0,\infty))$ such that: given points
$u,v_{0},v_{1}\in X$, there exists a curve $\gamma:[0,1]\rightarrow X$
satisfying $\gamma(0)=v_{0}$, $\gamma(1)=v_{1}$ and
\begin{equation}
\mathbf{E}(t,\tau,u;\gamma(s))\leq(1-s)\mathbf{E}(t,\tau,u;v_{0}%
)+s\mathbf{E}(t,\tau,u;v_{1})-\frac{1+\tau\lambda(t)}{2\tau}s(1-s)d^{2}%
(v_{0}.v_{1}), \label{eq:convexity}%
\end{equation}
for $0<\tau<\frac{1}{\lambda_{T}^{-}}$ and $s\in\lbrack0,1]$, where
$\lambda_{T}^{-}=\max\{-\inf_{t\in[0,T]}\lambda(t),0\}$.
\end{description}

\begin{rem}
\label{Rem-aux-1} Note that by Lemma \ref{lem:ambrosio-desig}, for each
$0<\tau<\tau^{\ast}$ and $u\in X$, we have that the function $t\rightarrow
\mathscr{E}_{t,\tau}(u)$ is bounded from below in $[0,T]$. In view of the
assumptions \textbf{E4} and \textbf{E5} we assume by technical reasons that
$\tau^{\ast}<\min\{\frac{1}{\lambda_{T+1}^{-}},1\}$.
\end{rem}

\begin{rem}
\label{rem:aux-2} In \textbf{E5}, we consider the existence of curves $\gamma
:[0,1]\rightarrow X$ for all $v_{0},v_{1}\in X$ and not only for elements in
the domain $\mathbf{D}$. This will be necessary for the applications in Section
\ref{sect:applications-wasserstein} where we will use the concept of generalized
geodesics in the Wasserstein space $\mathscr{P}_{2}(\mathbb{R}^{d}).$ These
curves exist independently of the functionals that we will analyze in that section.
\end{rem}

\section{Construction and properties of the implicit scheme}

\label{sect:construc-and-propert}\hspace{0.5cm}In this section we provide some
results about the sequence defined in (\ref{eq:minimization-problem}). They
can be seen as extensions of some results in \cite{Ambrosio} to the case of
time-dependent functionals. We start with the following preliminary result.

\begin{lem}
\label{lemma:existence-minimizer} Suppose \textbf{E1}, \textbf{E4} and
\textbf{E5} and let $u\in X$, $0\leq t\leq T$, $0<\tau<\frac{1}{\lambda
_{T}^{-}}$. Then, the minimization problem
\[
\min_{v\in X}\{\mathbf{E}(t,\tau,u;v)\}
\]
has a unique minimizer $u_{\tau}^{t}$.
\end{lem}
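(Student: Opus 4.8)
The plan is to prove existence and uniqueness separately, following the direct method of the calculus of variations together with the strict convexity coming from \textbf{E5}. First I would fix $u\in X$, $0\le t\le T$, $0<\tau<1/\lambda_T^-$, and abbreviate $F(v):=\mathbf{E}(t,\tau,u;v)=\mathcal{E}(t,v)+\tfrac{d^2(u,v)}{2\tau}$. The goal of the existence part is to produce a minimizer via a minimizing sequence: pick $(v_n)\subset X$ with $F(v_n)\to \mathscr{E}_{t,\tau}(u):=\inf_{v\in X}F(v)$. The key point is that this infimum is finite, i.e. bounded below: by \textbf{E4} there are $u^\ast$ and $\tau^\ast>\tau$ (after possibly shrinking $\tau$ as in Remark \ref{Rem-aux-1}, $0<\tau<\tau^\ast$) such that $\mathscr{E}_{t,\tau^\ast}(u^\ast)>-\infty$ uniformly on $[0,T]$, hence by Lemma \ref{lem:ambrosio-desig} (applied to the fixed-time functional $\mathcal{E}(t,\cdot)$) we get $\mathscr{E}_{t,\tau}(u)\ge \mathscr{E}_{t,\tau^\ast}(u^\ast)-\tfrac{1}{\tau^\ast-\tau}d^2(u^\ast,u)>-\infty$, and moreover the same lemma tells us the sublevels of $F$ are bounded. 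Therefore $(v_n)$ is a bounded sequence.

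Next I would extract a convergent subsequence. Here I would use that $(X,d)$ is complete and separable, but boundedness alone does not give compactness in a general metric space; instead the standard trick in this setting is to use Lemma \ref{lem:ambrosio-desig} again to control $d^2(v_n,v_m)$ in terms of $F(v_n)+F(v_m)$ minus twice the infimum — more precisely the second inequality of the lemma shows that along a minimizing sequence $d^2(v_n,v_m)\to 0$, so $(v_n)$ is Cauchy and hence converges to some $u_\tau^t\in X$. Then lower semicontinuity of $\mathcal{E}(t,\cdot)$ from \textbf{E1}, together with continuity of $v\mapsto d^2(u,v)$, gives $F(u_\tau^t)\le\liminf_n F(v_n)=\mathscr{E}_{t,\tau}(u)$, so $u_\tau^t$ is a minimizer.

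For uniqueness I would invoke \textbf{E5}. Suppose $v_0$ and $v_1$ are both minimizers, so $F(v_0)=F(v_1)=\mathscr{E}_{t,\tau}(u)=:m$. Apply \textbf{E5} with this $u$, $v_0$, $v_1$: there is a curve $\gamma$ with $\gamma(0)=v_0$, $\gamma(1)=v_1$ and, for $s\in(0,1)$,
\begin{equation}
F(\gamma(s))\le (1-s)F(v_0)+sF(v_1)-\frac{1+\tau\lambda(t)}{2\tau}s(1-s)d^2(v_0,v_1)=m-\frac{1+\tau\lambda(t)}{2\tau}s(1-s)d^2(v_0,v_1).\nonumber
\end{equation}
Since $0<\tau<1/\lambda_T^-$ we have $1+\tau\lambda(t)>0$ for $t\in[0,T]$ (because $\tau\lambda(t)\ge -\tau\lambda_T^->-1$), so if $d^2(v_0,v_1)>0$ the right-hand side is strictly less than $m$, contradicting that $m$ is the infimum. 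Hence $v_0=v_1$.

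The main obstacle I anticipate is the compactness/Cauchy step: in an abstract metric space one cannot simply pass to a weakly convergent subsequence, so the whole argument hinges on squeezing a Cauchy property out of the quadratic-plus-convex structure via Lemma \ref{lem:ambrosio-desig}; one must be careful that the hypothesis of that lemma ($\mathscr{E}_{\tau^\ast}(u^\ast)>-\infty$) is genuinely supplied by \textbf{E4} for the fixed-time functional $\mathcal{E}(t,\cdot)$, uniformly for $t\in[0,T]$, and that the admissible range $0<\tau<\tau^\ast$ is compatible with $0<\tau<1/\lambda_T^-$ after the normalization in Remark \ref{Rem-aux-1}. Everything else — lower semicontinuity, positivity of $1+\tau\lambda(t)$ — is routine bookkeeping with the stated assumptions.
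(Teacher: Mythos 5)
Your uniqueness argument is correct, and your use of \textbf{E4} together with Lemma \ref{lem:ambrosio-desig} to guarantee $\mathscr{E}_{t,\tau}(u)>-\infty$ and boundedness of sublevels is fine. But the crucial existence step — showing the minimizing sequence is Cauchy — is justified by the wrong tool, and as written it fails. The second inequality of Lemma \ref{lem:ambrosio-desig} bounds $d^{2}(u,v)$, the distance from the \emph{base point} $u$ to an arbitrary $v$, by $\frac{4\tau^{\ast}\tau}{\tau^{\ast}-\tau}\bigl(\mathbf{E}(t,\tau,u;v)-\mathscr{E}_{t,\tau^{\ast}}(u^{\ast})+\frac{1}{\tau^{\ast}-\tau}d^{2}(u^{\ast},u)\bigr)$; the subtracted constant is \emph{not} the infimum $\mathscr{E}_{t,\tau}(u)$, so this quantity does not tend to zero along a minimizing sequence — it only yields boundedness of the sublevels, which you yourself note is not enough for compactness in a general metric space. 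In particular the lemma gives no control of $d^{2}(v_{n},v_{m})$ in terms of $F(v_{n})+F(v_{m})-2\mathscr{E}_{t,\tau}(u)$, and no triangle-inequality manipulation of the bounded distances $d(u,v_{n})$, $d(u,v_{m})$ produces such a control.

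The estimate you need is exactly the one you wrote down for uniqueness, applied to approximate minimizers rather than exact ones: by \textbf{E5} with $s=1/2$, for any $v_{n},v_{m}$ there is a connecting curve $\gamma$ with
\begin{equation}
\frac{1+\tau\lambda(t)}{4\tau}\,d^{2}(v_{n},v_{m})\leq\bigl(\mathbf{E}(t,\tau,u;v_{n})-\mathscr{E}_{t,\tau}(u)\bigr)+\bigl(\mathbf{E}(t,\tau,u;v_{m})-\mathscr{E}_{t,\tau}(u)\bigr),\nonumber
\end{equation}
since $\mathscr{E}_{t,\tau}(u)\leq\mathbf{E}(t,\tau,u;\gamma(1/2))$. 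Because $1+\tau\lambda(t)>0$ for $\tau<1/\lambda_{T}^{-}$ and the right-hand side tends to $0$ along a minimizing sequence (here the finiteness of $\mathscr{E}_{t,\tau}(u)$ from \textbf{E4} and Lemma \ref{lem:ambrosio-desig} is what makes the differences meaningful), $(v_{n})$ is Cauchy; completeness and the lower semicontinuity argument you gave then finish the proof. This is precisely the paper's proof, so the repair is minimal — but as stated your Cauchy step rests on a claim Lemma \ref{lem:ambrosio-desig} does not provide.
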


\begin{proof}
Let $v_n\in \mathbf{D}$ be a minimizing sequence, i.e., $\displaystyle\lim_{n\to\infty}\mathbf{E}(t,\tau,u;v_n)=
\mathscr{E}_{t,\tau}(u)$. Given $m,n\in\mathbb{N}$, by the convexity property {\bf E5}, there is a curve $\gamma:[0,1]\to X$,
with $\gamma(0)=v_n$, $\gamma(1)=v_m$, and
\begin{eqnarray*}
\mathscr{E}_{t,\tau}(u)&\leq&\mathbf{E}(t,\tau,u;\gamma(1/2))\\
&\leq&\frac{1}{2}\mathbf{E}(t,\tau,u;v_n)+\frac{1}{2}\mathbf{E}(t,\tau,u;v_m)-\frac{\tau^{-1}+\lambda(t)}{8}d^2(v_n,v_m).
\end{eqnarray*}
Thus
\begin{eqnarray*}
\frac{\tau^{-1}+\lambda(t)}{4}d^2(v_n,v_m)&\leq&\left(\mathbf{E}(t,\tau,u;v_n)-\mathscr{E}_
{t,\tau}(u)\right)+\left(\mathbf{E}(t,\tau,u;v_m)-\mathscr{E}_{t,\tau}(u)\right).
\end{eqnarray*}
It follows from the above estimative that $v_n$ is a Cauchy sequence in $X$ and hence it converges to some $u_{\tau}^t\in X$. From the lower semicontinuity, we get that $u_{\tau}^t$ is a minimizer of the functional $\mathbf{E}(t,\tau,u;\cdot)$. The uniqueness follows from {\bf E5} and is left to the reader.
\end{proof}

In the next lemma we show that $\mathscr{E}_{\tau}^{t}(u)$ and $u_{\tau}^{t}$
depend continuously on $(\tau,t,u).$

\begin{lem}
\label{lem:continuity-minimizer} Assume the properties \textbf{E1} to
\textbf{E5}. Then, the following statements hold true:

\begin{description}
\item[a)] The map $(\tau,t,u)\in(0,\tau^{\ast}(T))\times\lbrack0,T]\times
X\rightarrow\mathscr{E}_{\tau}^{t}(u)\in\mathbb{R}$ is continuous.

\item[b)] The map $(\tau,t,u)\in(0,\tau^{\ast}(T))\times\lbrack0,T]\times X\to
u_{\tau}^{t}\in X$ is continuous.
\end{description}
\end{lem}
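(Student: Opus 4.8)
The plan is to prove both continuity statements together, deriving (b) as essentially a quantitative refinement of the Cauchy estimate already used in Lemma \ref{lemma:existence-minimizer}, and obtaining (a) along the way. Throughout I fix $T>0$ and work with $0<\tau<\tau^\ast(T)$, keeping in mind the technical normalization $\tau^\ast<\min\{1/\lambda_{T+1}^-,1\}$ from Remark \ref{Rem-aux-1}, so that the coefficient $1+\tau\lambda(t)$ stays bounded below by a fixed positive constant $c_T>0$ for all $t\in[0,T]$ and all admissible $\tau$.

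First I would establish a local Lipschitz-type bound for the minimizer $u_\tau^t$ when only $u$ varies. Fix $\tau,t$ and take $u_0,u_1\in X$ with minimizers $w_0=u_\tau^{t,u_0}$, $w_1=u_\tau^{t,u_1}$. Apply \textbf{E5} to the functional $\mathbf E(t,\tau,u_0;\cdot)$ along a curve $\gamma$ from $w_0$ to $w_1$, evaluate at $s=\tfrac12$, and use minimality of $w_0$; this gives $\tfrac{1+\tau\lambda(t)}{8\tau}d^2(w_0,w_1)\le \tfrac12[\mathbf E(t,\tau,u_0;w_1)-\mathbf E(t,\tau,u_0;w_0)]$, and then bound the right side by expanding $d^2(u_0,w_1)-d^2(u_1,w_1)$ via the triangle inequality plus the fact (from Lemma \ref{lem:ambrosio-desig} together with \textbf{E4}) that sublevels, hence the $w_i$, stay in a bounded set as $u$ ranges over a bounded set. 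Symmetrizing in $u_0,u_1$ yields $d(w_0,w_1)\le C_T\, d(u_0,u_1)^{1/2}\,(\text{local factor})$, which is enough for continuity in $u$, uniformly on bounded sets and locally uniformly in $(\tau,t)$. A cleaner alternative I would try first is to use the full strength of Lemma \ref{lem:ambrosio-desig}: its second inequality, applied to $v=w_1$, already controls $d^2(u_0,w_1)$ in terms of $\mathbf E(t,\tau,u_0;w_1)-\mathscr E_{\tau^\ast}(u^\ast)$, giving the needed a priori bounds directly.

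Next I would handle continuity in $(\tau,t)$ for $\mathscr E_\tau^t(u)$, which feeds both parts. For the $t$-dependence, condition \textbf{E3} gives, for any $v\in\mathbf D$, $|\mathcal E(t,v)-\mathcal E(s,v)|\le \big(\int_s^t\beta(r)\,dr\big)(1+d^2(v,u^\ast))$; taking the infimum over $v$ in a bounded set (again justified by Lemma \ref{lem:ambrosio-desig}) shows $t\mapsto \mathscr E_\tau^t(u)$ is continuous, in fact absolutely continuous, with modulus controlled by $\int_s^t\beta$. For the $\tau$-dependence, I would use the standard monotonicity/concavity of the Moreau--Yosida map in $\tau$: $\mathscr E_\tau^t(u)$ is nonincreasing and concave in $1/\tau$ (a one-line computation from the definition as an infimum of affine functions of $1/\tau$), hence continuous on the open interval $(0,\tau^\ast)$; quantitatively $0\le \mathscr E_{\tau'}^t(u)-\mathscr E_{\tau}^t(u)\le \big(\tfrac{1}{2\tau'}-\tfrac{1}{2\tau}\big)d^2(u,u_\tau^t)$ for $\tau'<\tau$, with $d^2(u,u_\tau^t)$ bounded on the relevant sets. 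Combining the three one-variable estimates gives joint continuity in (a).

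Finally, for (b) I would combine the pieces: write $d(u_{\tau}^{t},u_{\tau_0}^{t_0})\le d(u_\tau^{t,u},u_{\tau}^{t,u_0}) + d(u_{\tau}^{t,u_0},u_{\tau_0}^{t,u_0}) + d(u_{\tau_0}^{t,u_0},u_{\tau_0}^{t_0,u_0})$ and treat each term. The first is the $u$-continuity just proved. For the second and third, I would run the same convexity trick as in Lemma \ref{lemma:existence-minimizer} but comparing two minimizers of the \emph{same} base point $u_0$ for parameters $(\tau,t)$ versus $(\tau_0,t_0)$: along the \textbf{E5}-curve $\gamma$ between them, evaluated at $s=\tfrac12$, minimality gives $\tfrac{c_T}{8\tau}d^2\le \tfrac12\big(\mathbf E(\tau,t;\gamma(1))-\mathbf E(\tau,t;\gamma(0))\big)+\tfrac12\big(\mathbf E(\tau_0,t_0;\gamma(0))-\mathbf E(\tau_0,t_0;\gamma(1))\big)$, and the right-hand side collapses, after canceling, into a sum of $\big|\tfrac{1}{2\tau}-\tfrac{1}{2\tau_0}\big|\,d^2(\cdot,\cdot)$ terms plus $|\mathcal E(t,\cdot)-\mathcal E(t_0,\cdot)|$ terms, both of which are small by the estimates above and the uniform boundedness of the minimizers. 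I expect the main obstacle to be purely bookkeeping: making sure the boundedness of all the minimizers involved is genuinely \emph{uniform} as $(\tau,t,u)$ ranges over a compact subset of $(0,\tau^\ast(T))\times[0,T]\times\{d(\cdot,u^\ast)\le R\}$, so that constants like $c_T$ and the bounds from Lemma \ref{lem:ambrosio-desig} can be chosen once and for all; none of the individual steps is deep, but the estimate in Lemma \ref{lem:ambrosio-desig} must be invoked with care near $\tau=\tau^\ast$ since the constant $1/(\tau^\ast-\tau)$ blows up there — this is why we stay on compact subsets of the open interval.
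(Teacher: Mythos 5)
Your proposal is correct, but it follows a genuinely different route from the paper. The paper argues sequentially: for $(\tau_n,t_n,u_n)\to(\tau_0,t_0,u_0)$ with minimizers $v_n$, it gets $\limsup_n\mathscr{E}_{t_n,\tau_n}(u_n)\le\mathscr{E}_{t_0,\tau_0}(u_0)$ by testing against an arbitrary $v$, and the matching $\liminf$ by expanding $d^2(v_n,u_n)\ge\big(d(u_n,u_0)-d(v_n,u_0)\big)^2$, isolating the term $\mathbf{E}(t_0,\tau_0,u_0;v_n)\ge\mathscr{E}_{t_0,\tau_0}(u_0)$ and absorbing the time shift through \textbf{E3} and the boundedness of $v_n$ from Lemma \ref{lem:ambrosio-desig}; part b) then comes almost for free, since a) shows $\mathbf{E}(t_0,\tau_0,u_0;v_n)-\mathscr{E}_{t_n,\tau_n}(u_n)\to0$, so $v_n$ is a minimizing sequence for the limit functional and the Cauchy/convexity argument of Lemma \ref{lemma:existence-minimizer} forces $v_n\to(u_0)_{\tau_0}^{t_0}$. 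You instead build quantitative, locally uniform moduli: a H\"older-$1/2$ estimate in $u$ for the minimizer via \textbf{E5} plus cross-minimality, an \textbf{E3}-modulus in $t$ and the elementary $\tau$-comparison $0\le\mathscr{E}_{\tau'}^{t}(u)-\mathscr{E}_{\tau}^{t}(u)\le\big(\tfrac{1}{2\tau'}-\tfrac{1}{2\tau}\big)d^2(u,u_\tau^t)$ for $\mathscr{E}$, and then the same midpoint-convexity trick comparing minimizers of a fixed base point at two parameter values; all the estimates you display check out, and the uniform boundedness you flag is exactly what Lemma \ref{lem:ambrosio-desig} together with \textbf{E3}--\textbf{E4} provides on compact subsets of $(0,\tau^\ast)\times[0,T]\times\{d(\cdot,u^\ast)\le R\}$. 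Your approach buys explicit local rates (useful if one later wants quantitative stability of the scheme), at the price of bookkeeping; the paper's sequential argument is shorter and recycles the existence lemma directly. Three small points to tidy up: the Moreau--Yosida value is nonincreasing in $\tau$, i.e.\ nondecreasing in $1/\tau$ (your wording inverts this, though your displayed bound has the right sign); for a) you should also record the one-line local Lipschitz estimate of $\mathscr{E}_{\tau}^{t}$ in $u$ coming from the same cross-minimality expansion, since joint continuity of $\mathscr{E}$, not of the minimizer, is what a) asserts; and since the curve in \textbf{E5} may depend on the functional, the two-sided comparison in b) should be read as two separate applications of \textbf{E5} (one per functional), which suffices because only endpoint values survive after using minimality.
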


\begin{proof}
We start with item a). Let $(\tau_n,t_n,u_n)$ be a sequence converging to $(\tau_0,t_0,u_0)$ in $(0,\tau^{\ast}(T))\times\lbrack0,T]\times X$. Denote
by $v_n=(u_n)_{\tau_n}^{t_n}$ the minimizer of $\mathbf{E}(t_n,\tau_n,u_n;\cdot)$ given in Lemma \ref{lemma:existence-minimizer}. It follows that
\begin{eqnarray*}
\limsup_{n\to\infty}\mathscr{E}_{t_n,\tau_n}(u_n)&=&\limsup_{n\to\infty}\mathbf{E}(t_n,\tau_n,u_n;v_n)\\
&\leq&\limsup_{n\to\infty}\mathbf{E}(t_n,\tau_n,u_n;v)\\
&=&\mathbf{E}(t_0,\tau_0,u_0;v),
\end{eqnarray*}
for all $v\in X$. Taking the infimum in the right hand side, we obtain $\displaystyle\limsup_{n\to\infty}\mathscr{E}_{t_n,
\tau_n}(u_n)\leq\mathscr{E}_{t_0,\tau_0}(u_0)$. In view of Lemma \ref{lem:ambrosio-desig}, the sequence $v_n$ is bounded. So, we can estimate
\begin{eqnarray*}
\liminf_{n\to\infty}\mathscr{E}_{t_n,\tau_n}(u_n)&\geq&\liminf_{n\to\infty}\left\{\frac{1}{2\tau_n}(d(u_n,u_0)-
d(v_n,u_0))^2 +\mathcal{E}(t_n,v_n)\right\}\\
&=&\liminf\left\{\frac{d^2(u_n,u_0)}{2\tau_n}-\frac{d(u_n,u_0)d(u_0,v_n)}{\tau_n}+
\frac{\tau_0-\tau_n}{2\tau_0\tau_n}d^2(v_n,u_0)\right.\\
&\ &\left. +\mathbf{E}(t_0,\tau_0,u_0;v_n)+
(\mathcal{E}(t_n,v_n)-\mathcal{E}(t_0,v_n))\right\}\\
&\geq& \liminf_{n\to\infty}\left\{\mathscr{E}_{t_0,\tau_0}(u_0)-\int_{[t_n,t_0]}{\beta(r)\ dr}
(1+d^2(u^*,v_n))\right\}\\
&=&\mathscr{E}_{t_0,\tau_0}(u_0),
\end{eqnarray*}
and thus $\displaystyle\lim_{n\to\infty}\mathscr{E}_{t_n,\tau_n}(u_n)=\mathscr{E}_{t_0,\tau_0}(u_0)$. For item b),
note that
\begin{eqnarray*}
\mathbf{E}(t_0,\tau_0,u_0;v_n)-\mathscr{E}_{t_n,\tau_n}(u_n)&=&\left(\frac{d^2(u_0,v_n)}{\tau_0}
-\frac{d^2(v_n,u_n)}{\tau_n}\right)\\
&\ &+(\mathcal{E}(t_0,v_n)-\mathcal{E}(t_n,v_n)).
\end{eqnarray*}
Using \textbf{E3}, the boundedness of $v_n$ and the convergence $(\tau_n,t_n,u_n)\rightarrow (\tau_0,t_0,u_0)$,
we get $$\lim_{n\to \infty}(\mathbf{E}(t_0,\tau_0,u_0;v_n)-\mathscr{E}_{t_n,\tau_n}(u_n))=0,$$
and, by the item a),
$$\lim_{n\to \infty}\mathbf{E}(t_0,\tau_0,u_0;v_n)=\mathscr{E}_{t_0,\tau_0}(u_0).$$
It follows that $v_n$ is also a minimizing sequence for $\mathscr{E}_{t_0,\tau_0}(u_0)$ and then, by the same arguments in the proof of Lemma \ref{lemma:existence-minimizer}, it converges to $(u_0)_{\tau_0}^{t_0}$, as required.
\end{proof}

Because of Lemma \ref{lemma:existence-minimizer}, for each family of initial data $U_{\boldsymbol{\tau}}^{0}\in X$
associated to a partition $\boldsymbol{\tau}$ of $[0,+\infty),$ we have that
the sequence (\ref{eq:minimization-problem}) is well-defined for each
$n\in\mathbb{N}$ such that $t_{\boldsymbol{\tau} }^{n}<T+\tau^{*}$, if
$\tau^{*}<\frac{1}{\lambda_{T+1}^{-}}$. In what follows, we give some
estimates for the minimizer of the Moreau-Yosida approximation \eqref{eq:moreau-yosida-approx1}. These will
play an important role in the convergence of approximate solutions.

\begin{lem}
\label{lem:minimizer-estimative1} Assume that $\mathcal{E}$ satisfies the
properties \textbf{E1} to \textbf{E5}. Let $0<\tau<\tau^{\ast}$, $0\leq t \leq
T$ and $u\in\mathbf{D}$. Then
\begin{equation}
d^{2}(u_{\tau}^{t+\tau},u^{\ast})-d^{2}(u,u^{\ast})\leq\epsilon\frac
{d^{2}(u_{\tau}^{t+\tau},u)}{\tau}+\tau\frac{d^{2}(u_{\tau}^{t+\tau},u^{\ast
})}{\epsilon}, \label{eq:estimative-aux1}%
\end{equation}
for all $\epsilon>0$. If $\tau\leq\tau^{\ast}/8$, we have
\begin{align}
d^{2}(u_{\tau}^{t+\tau},u^{\ast})  &  \leq4\tau^{\ast}\left(  \mathcal{E}%
(t,u)+\int_{t}^{t+\tau^{\ast}}{\beta(r)\ dr}(1+d^{2}(u,u^{\ast}))-\inf_{0\leq
r\leq t+\tau^{\ast}}\mathscr{E}_{r,\tau^{\ast}}(u^{\ast}))\right) \nonumber\\
&  +4d^{2}(u,u^{\ast}). \label{eq:estimative-aux2}%
\end{align}

\end{lem}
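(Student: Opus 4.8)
The plan is to prove the two displayed inequalities independently. The first, \eqref{eq:estimative-aux1}, is purely metric: writing $w=u_{\tau}^{t+\tau}$, the triangle inequality gives $d(u,u^{\ast})\ge d(w,u^{\ast})-d(w,u)$, and squaring (the case in which the right-hand side is negative being trivial) yields $d^{2}(u,u^{\ast})\ge d^{2}(w,u^{\ast})-2\,d(w,u^{\ast})\,d(w,u)$, hence $d^{2}(w,u^{\ast})-d^{2}(u,u^{\ast})\le 2\,d(w,u^{\ast})\,d(w,u)$; one then applies Young's inequality $2xy\le\frac{\tau}{\epsilon}x^{2}+\frac{\epsilon}{\tau}y^{2}$ with $x=d(w,u^{\ast})$, $y=d(w,u)$. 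No property of $\mathcal{E}$ enters here.

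For \eqref{eq:estimative-aux2}, keep $w=u_{\tau}^{t+\tau}$ (well defined and unique by Lemma \ref{lemma:existence-minimizer}, since $\tau<\tau^{\ast}<1/\lambda_{T+1}^{-}$ and $t+\tau<T+1$). Comparing the functional $\mathbf{E}(t+\tau,\tau,u;\cdot)$ at $w$ with its value at $v=u$, and then using \textbf{E3} together with $\beta\ge0$ and $\tau<\tau^{\ast}$, gives
\[
\mathcal{E}(t+\tau,w)+\frac{d^{2}(u,w)}{2\tau}\le\mathcal{E}(t+\tau,u)\le\mathcal{E}(t,u)+\int_{t}^{t+\tau^{\ast}}\beta(r)\,dr\,(1+d^{2}(u,u^{\ast}))=:M ,
\]
which I will call (A). Separately, since $\mathscr{E}_{t+\tau,\tau^{\ast}}(u^{\ast})=\inf_{v}\{\mathcal{E}(t+\tau,v)+d^{2}(u^{\ast},v)/(2\tau^{\ast})\}$, testing with $v=w$ gives
\[
\mathcal{E}(t+\tau,w)\ge\mathscr{E}_{t+\tau,\tau^{\ast}}(u^{\ast})-\frac{d^{2}(u^{\ast},w)}{2\tau^{\ast}}\ge I-\frac{d^{2}(u^{\ast},w)}{2\tau^{\ast}},\qquad I:=\inf_{0\le r\le t+\tau^{\ast}}\mathscr{E}_{r,\tau^{\ast}}(u^{\ast}),
\]
where $I$ is finite by \textbf{E4} and Remark \ref{Rem-aux-1}; call this (B). Adding (A) and (B) eliminates $\mathcal{E}(t+\tau,w)$ and, after multiplying by $4\tau^{\ast}$, leaves
\[
\frac{2\tau^{\ast}}{\tau}\,d^{2}(u,w)-2\,d^{2}(u^{\ast},w)\le 4\tau^{\ast}(M-I).
\]

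Adding $4\,d^{2}(u,u^{\ast})$ to both sides, the bound \eqref{eq:estimative-aux2} follows once one verifies the purely metric inequality $3\,d^{2}(u^{\ast},w)\le\frac{2\tau^{\ast}}{\tau}\,d^{2}(u,w)+4\,d^{2}(u,u^{\ast})$. Using $d(u^{\ast},w)\le d(u,w)+d(u,u^{\ast})$, the left-hand side is at most $3\big(d(u,w)+d(u,u^{\ast})\big)^{2}$, so it suffices that $6\,d(u,w)\,d(u,u^{\ast})\le\big(\frac{2\tau^{\ast}}{\tau}-3\big)d^{2}(u,w)+d^{2}(u,u^{\ast})$, and this holds by the arithmetic--geometric mean inequality since $\tau\le\tau^{\ast}/8$ makes $\frac{2\tau^{\ast}}{\tau}-3\ge 13\ge 9$. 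I expect the one genuinely delicate point to be exactly this last manoeuvre: one has to combine (A) and (B) and only afterwards isolate $d^{2}(u^{\ast},w)$; isolating $d^{2}(u,w)$ first and then invoking the triangle inequality is too lossy and, moreover, would require a lower bound on the residual $M-I$, which can be negative. Everything else is bookkeeping.
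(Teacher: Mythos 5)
Your proof is correct and follows essentially the same route as the paper: both parts rest on the same ingredients, namely the triangle-plus-Young estimate for \eqref{eq:estimative-aux1}, the comparison of $\mathbf{E}(t+\tau,\tau,u;\cdot)$ at $u_{\tau}^{t+\tau}$ and at $u$, the lower bound $\mathcal{E}(t+\tau,u_{\tau}^{t+\tau})\geq\mathscr{E}_{t+\tau,\tau^{\ast}}(u^{\ast})-d^{2}(u^{\ast},u_{\tau}^{t+\tau})/(2\tau^{\ast})$, and \textbf{E3}, with the condition $\tau\leq\tau^{\ast}/8$ used to absorb the $d^{2}(u_{\tau}^{t+\tau},u^{\ast})$ term. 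The only (cosmetic) difference is the final absorption: the paper simply takes $\epsilon=\tau^{\ast}/2$ in the chain leading to \eqref{eq:estimative-aux1}, which bounds $d^{2}(u_{\tau}^{t+\tau},u^{\ast})-d^{2}(u,u^{\ast})$ directly and leaves a term $\tfrac{2\tau}{\tau^{\ast}}d^{2}(u_{\tau}^{t+\tau},u^{\ast})\leq\tfrac14 d^{2}(u_{\tau}^{t+\tau},u^{\ast})$ to absorb, whereas you add your inequalities (A) and (B) first and then recover $d^{2}(u_{\tau}^{t+\tau},u^{\ast})$ by a separate triangle-inequality/AM--GM step; both give the stated constants.
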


\begin{proof} We have that
\begin{eqnarray}
d^2(u^{t+\tau}_{\tau},u^*)-d^2(u,u^*)&=&-2d(u^{t+\tau}_{\tau},u^*)(d(u,u^*)-d(u^{t+\tau}_{\tau},u^*))\nonumber\\
&\ &-(d(u,u^*)-d(u^{t+\tau}_{\tau},u^*))^2\nonumber\\
&\leq&2d(u^{t+\tau}_{\tau},u^*)d(u^{t+\tau}_{\tau},u)\nonumber\\
&\leq&\epsilon\frac{d^2(u^{t+\tau}_{\tau},u)}{\tau}+\tau\frac{d^2(u^{t+\tau}_{\tau},u^*)}
{\epsilon}\label{eq:est-aux}\\
&\leq&2\epsilon(\mathcal{E}(t+\tau,u)-\mathcal{E}(t+\tau,u^{t+\tau}_{\tau}))+\tau\frac{d^2(u^{t+\tau}_{\tau},u^*)}
{\epsilon}\nonumber\\
&\leq&2\epsilon(\mathcal{E}(t+\tau,u)-\mathscr{E}_{t+\tau,\tau^*}(u^*))\nonumber\\
&\ &+\epsilon\frac{d^2(u^{t+\tau}_{\tau},u^*)}{\tau^*}
+\tau\frac{d^2(u^{t+\tau}_{\tau},u^*)}{\epsilon}.\label{aux-est-1}
\end{eqnarray}
Notice that we have already obtained (\ref{eq:estimative-aux1}) in (\ref{eq:est-aux}). Now, choosing $\epsilon=\frac{\tau^*}{2}$ in (\ref{aux-est-1}), we get
\begin{eqnarray*}
\frac{1}{2}d^2(u^{t+\tau}_{\tau},u^*)&\leq&\tau^*\left(\mathcal{E}(t,u)+\int_t^{t+\tau^*}{\beta(r)\
dr}(1+d^2(u,u^*))-\mathscr{E}_{t+\tau,\tau^*}(u^*))\right)\\
&\ &+d^2(u,u^*)+\frac{2\tau}{\tau^*}d^2(u^{t+\tau}_{\tau},u^*),
\end{eqnarray*}
which implies (\ref{eq:estimative-aux2}) when $\tau\leq\tau^*/8$.
\end{proof}The next result gives a time-differentiability property for
$\mathscr{E}_{t,\tau}(u)$.

\begin{prop}
\label{prop:differentiability-property1} Assume \textbf{E1} to
\textbf{E5}. For $0<\tau\leq\frac{\tau^{\ast}}{8}$, the function
$\tau\rightarrow\mathscr{E}_{t+\tau,\tau}(u)$ is locally absolutely continuous
in $(0,\frac{\tau^{\ast}}{8}]$ and then is differentiable almost everywhere
in that interval. For each $u\in\mathbf{D,}$ assume further that the set of
differentiability points of $t\rightarrow\mathcal{E}(t,u)$ does not depend on
$u$ (e.g., when $t\rightarrow\mathcal{E}(t,u)$ is differentiable). Then
\begin{equation}
\frac{d}{d\tau}\mathscr{E}_{t+\tau,\tau}(u)=\partial_{t}\mathcal{E}%
(t+\tau,u_{\tau}^{t+\tau})-\frac{d^{2}(u,u_{\tau}^{t+\tau})}{2\tau^{2}}
\label{eq:formula-of-derivative}%
\end{equation}
in the set of differentiability points.
\end{prop}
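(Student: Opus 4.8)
The plan is to prove the two claims separately: first the local absolute continuity of $\tau\mapsto\mathscr{E}_{t+\tau,\tau}(u)$ on $(0,\tau^{\ast}/8]$, and then the explicit differentiation formula \eqref{eq:formula-of-derivative} at points where the derivative exists. For the absolute continuity, I would fix $u\in\mathbf{D}$ and a compact subinterval $[a,b]\subset(0,\tau^{\ast}/8]$, and compare $\mathscr{E}_{t+\tau_1,\tau_1}(u)$ with $\mathscr{E}_{t+\tau_2,\tau_2}(u)$ for $\tau_1<\tau_2$ in $[a,b]$. Writing each as $\mathbf{E}(t+\tau_i,\tau_i,u;u_{\tau_i}^{t+\tau_i})$ and using the minimality of each minimizer against the other's argument, I get two-sided bounds of the form
\begin{align*}
\mathscr{E}_{t+\tau_2,\tau_2}(u)-\mathscr{E}_{t+\tau_1,\tau_1}(u)
&\le \mathbf{E}(t+\tau_2,\tau_2,u;u_{\tau_1}^{t+\tau_1})-\mathbf{E}(t+\tau_1,\tau_1,u;u_{\tau_1}^{t+\tau_1})\\
&= \big(\mathcal{E}(t+\tau_2,u_{\tau_1}^{t+\tau_1})-\mathcal{E}(t+\tau_1,u_{\tau_1}^{t+\tau_1})\big)
+\Big(\frac{1}{2\tau_2}-\frac{1}{2\tau_1}\Big)d^2(u,u_{\tau_1}^{t+\tau_1}),
\end{align*}
and symmetrically for the lower bound with $\tau_1,\tau_2$ swapped in the arguments. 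The first term on the right is controlled by \textbf{E3}, namely by $\int_{t+\tau_1}^{t+\tau_2}\beta(r)\,dr\,(1+d^2(u_{\tau_1}^{t+\tau_1},u^{\ast}))$, and the second by $|\tau_2-\tau_1|/(2ab)\cdot d^2(u,u_{\tau_1}^{t+\tau_1})$. The key input making these uniform is Lemma \ref{lem:minimizer-estimative1}, specifically \eqref{eq:estimative-aux2}, which bounds $d^2(u_{\tau}^{t+\tau},u^{\ast})$ — and hence, by the triangle inequality, $d^2(u,u_{\tau}^{t+\tau})$ — uniformly for $\tau\in[a,b]$ in terms of $\mathcal{E}(t,u)$, the local $L^1$ norm of $\beta$, and $\inf_{0\le r\le t+\tau^{\ast}}\mathscr{E}_{r,\tau^{\ast}}(u^{\ast})$ (finite by \textbf{E4} and Remark \ref{Rem-aux-1}). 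With that uniform bound, say $d^2(u,u_{\tau}^{t+\tau})\le M$ for $\tau\in[a,b]$, the increment is dominated by $\int_{t+\tau_1}^{t+\tau_2}\beta(r)\,dr\,(1+M') + \frac{M}{2a^2}|\tau_2-\tau_1|$, whose total variation over any finite collection of disjoint intervals is controlled by the total length and the absolute continuity of $r\mapsto\int_0^r\beta$; this gives local absolute continuity, hence a.e. differentiability.

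For the formula \eqref{eq:formula-of-derivative}, I would work at a point $\tau$ where both $\tau\mapsto\mathscr{E}_{t+\tau,\tau}(u)$ and (by the extra hypothesis, uniformly in the spatial argument) $r\mapsto\mathcal{E}(r,\cdot)$ are differentiable, and compute the difference quotient in two directions. For the upper Dini derivative, insert the competitor $u_{\tau}^{t+\tau}$ into $\mathbf{E}(t+h+\tau+h,\cdot)$ — more precisely use minimality of $\mathscr{E}_{t+\tau+h,\tau+h}(u)$ tested against $v=u_{\tau}^{t+\tau}$:
\[
\mathscr{E}_{t+\tau+h,\tau+h}(u)-\mathscr{E}_{t+\tau,\tau}(u)
\le \big(\mathcal{E}(t+\tau+h,u_{\tau}^{t+\tau})-\mathcal{E}(t+\tau,u_{\tau}^{t+\tau})\big)
+\Big(\frac{1}{2(\tau+h)}-\frac{1}{2\tau}\Big)d^2(u,u_{\tau}^{t+\tau}).
\]
Dividing by $h>0$ and letting $h\to0^+$, the first term tends to $\partial_t\mathcal{E}(t+\tau,u_{\tau}^{t+\tau})$ (here is where one needs the differentiability of $r\mapsto\mathcal{E}(r,u_{\tau}^{t+\tau})$ at $r=t+\tau$, guaranteed because the differentiability set is $u$-independent) and the second tends to $-\frac{1}{2\tau^2}d^2(u,u_{\tau}^{t+\tau})$. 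For the reverse inequality, test $\mathscr{E}_{t+\tau,\tau}(u)$ against the minimizer $v=u_{\tau+h}^{t+\tau+h}$:
\[
\mathscr{E}_{t+\tau,\tau}(u)-\mathscr{E}_{t+\tau+h,\tau+h}(u)
\le \big(\mathcal{E}(t+\tau,u_{\tau+h}^{t+\tau+h})-\mathcal{E}(t+\tau+h,u_{\tau+h}^{t+\tau+h})\big)
+\Big(\frac{1}{2\tau}-\frac{1}{2(\tau+h)}\Big)d^2(u,u_{\tau+h}^{t+\tau+h}),
\]
divide by $-h$ and let $h\to0^+$; by the continuity of the minimizer map from Lemma \ref{lem:continuity-minimizer}(b), $u_{\tau+h}^{t+\tau+h}\to u_{\tau}^{t+\tau}$, so $d^2(u,u_{\tau+h}^{t+\tau+h})\to d^2(u,u_{\tau}^{t+\tau})$, and the first term again produces $\partial_t\mathcal{E}(t+\tau,u_{\tau}^{t+\tau})$. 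Combining the two one-sided bounds pins the derivative to the claimed value.

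The main obstacle is passing to the limit in the term $\mathcal{E}(t+\tau\pm\text{(something)},u_{\tau'}^{t+\tau'})$ where \emph{both} the time argument and the spatial point move with $h$: one cannot simply invoke $\partial_t\mathcal{E}$ at a fixed point. The resolution is exactly the combination used above — continuity of $\tau'\mapsto u_{\tau'}^{t+\tau'}$ from Lemma \ref{lem:continuity-minimizer} to freeze the spatial argument in the limit, together with the hypothesis that the differentiability set of $r\mapsto\mathcal{E}(r,w)$ does not depend on $w$, so that the a.e.\ time-differentiability can be applied at the specific point $w=u_{\tau}^{t+\tau}$. One should also check that the $\liminf$/$\limsup$ one-sided arguments are set up so the uncontrolled spatial variation only appears in a term multiplied by a quantity that vanishes or converges, which the estimate in Lemma \ref{lem:minimizer-estimative1} and the uniform local bound on $d^2(u,u_{\tau}^{t+\tau})$ ensure. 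A minor point is handling the difference quotient of $\mathcal{E}$ in time via \textbf{E3} to get an $L^1$-type majorant valid uniformly near $\tau$, which lets one interchange limit and the estimates without a dominated-convergence subtlety.
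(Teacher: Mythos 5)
Your absolute-continuity argument coincides with the paper's: the two-sided comparison by minimality, the use of \textbf{E3}, and the uniform control of $d^{2}(u,u_{\tau}^{t+\tau})$ via \eqref{eq:estimative-aux2} are exactly the estimates (\ref{eq:differentiability-property-aux1})--(\ref{eq:differentiability-property-aux2}) in the paper.

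The differentiation step, however, has a genuine gap in your reverse inequality. There you must identify the limit of
\[
\frac{\mathcal{E}(t+\tau+h,u_{\tau+h}^{t+\tau+h})-\mathcal{E}(t+\tau,u_{\tau+h}^{t+\tau+h})}{h},
\]
in which the spatial argument moves with $h$. Continuity of the minimizer (Lemma \ref{lem:continuity-minimizer} b)) gives $d^{2}(u,u_{\tau+h}^{t+\tau+h})\to d^{2}(u,u_{\tau}^{t+\tau})$ and, via \textbf{E1}, only lower semicontinuity of $\mathcal{E}(t,\cdot)$; combined with the hypothesis that the set of differentiability points of $t\mapsto\mathcal{E}(t,w)$ is $w$-independent (which is not uniform differentiability in $w$), this does not force the displayed quotient to converge to $\partial_{t}\mathcal{E}(t+\tau,u_{\tau}^{t+\tau})$. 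What \textbf{E3} yields is merely the bound $\frac{1}{h}\int_{t+\tau}^{t+\tau+h}\beta(r)\,dr\,(1+d^{2}(u_{\tau+h}^{t+\tau+h},u^{\ast}))$; the implication you need is essentially condition \eqref{eq:lower-semicon-of-derivative}, which the paper assumes only later, in Theorem \ref{theor:sol-flux-grad}, not in this proposition. The repair stays inside your toolkit and is what the paper's lateral-limit argument with (\ref{eq:differentiability-property-aux1})--(\ref{eq:differentiability-property-aux2}) accomplishes: take the lower bound from the left, comparing $\tau-h$ with $\tau$ through the estimate whose minimizer sits at the larger parameter, namely
\[
\mathscr{E}_{t+\tau,\tau}(u)-\mathscr{E}_{t+\tau-h,\tau-h}(u)\;\geq\;\mathcal{E}(t+\tau,u_{\tau}^{t+\tau})-\mathcal{E}(t+\tau-h,u_{\tau}^{t+\tau})-\frac{h}{2\tau(\tau-h)}\,d^{2}(u,u_{\tau}^{t+\tau}),
\]
so that the spatial argument is frozen at $u_{\tau}^{t+\tau}$ in both one-sided bounds; since the derivative of $\tau\mapsto\mathscr{E}_{t+\tau,\tau}(u)$ exists at $\tau$, the right-sided upper bound and this left-sided lower bound together pin it to \eqref{eq:formula-of-derivative}.
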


\begin{proof}
Let $0<\tau_0<\tau_1\leq\frac{\tau^*}{8}$. Recalling that $u_{\tau_0}^{t+\tau_0}$ minimizes $E(t+\tau_0,\tau_0, u; \cdot)$, and using \textbf{E3}, we have that
\begin{eqnarray}
\mathscr{E}_{t+\tau_1,\tau_1}(u)-\mathscr{E}_{t+\tau_0,\tau_0}(u)&\leq&\mathcal{E}(t+\tau_1,u_{\tau_0}^{t+\tau_0})
-\mathcal{E}(t+\tau_0,u_{\tau_0}^{t+\tau_0})\nonumber\\
&\ &+\frac{\tau_0-\tau_1}{2\tau_1\tau_0}d^2(u,u_{\tau_0}^{t+\tau_0})\label{eq:differentiability-property-aux1}\\
&\leq&\int_{t+\tau_0}^{t+\tau_1}{\beta(r)\ dr}(1+d^2(u^{t+\tau_0}_{\tau_0},u^*))+
\frac{\tau_0-\tau_1}{2\tau_1\tau_0}d^2(u,u_{\tau_0}^{t+\tau_0}).\nonumber
\end{eqnarray}
Similarly, but now using $u_{\tau_1}^{t+\tau_1}$, it follows that
\begin{eqnarray}
\mathscr{E}_{t+\tau_1,\tau_1}(u)-\mathscr{E}_{t+\tau_0,\tau_0}(u)&\geq&\mathcal{E}(t+\tau_1,u_{\tau_1}^{t+\tau_1})
-\mathcal{E}(t+\tau_0,u_{\tau_1}^{t+\tau_1})\nonumber\\
&\ &+\frac{\tau_0-\tau_1}{2\tau_1\tau_0}d^2(u,u_{\tau_1}^{t+\tau_1})\label{eq:differentiability-property-aux2}\\
&\geq&\frac{\tau_0-\tau_1}{2\tau_1\tau_0}d^2(u,u_{\tau_1}^{t+\tau_1})
-\int_{t+\tau_0}^{t+\tau_1}{\beta(r)\ dr}(1+d^2(u^{t+\tau_1}_{\tau_1},u^*)).\nonumber
\end{eqnarray}
Notice that (\ref{eq:estimative-aux2}) allows us to estimate
the terms $d(u^*,u^{t+\tau_i}_{\tau_i})$ and $d(u,u^{t+\tau_i}_{\tau_i})$, $i=0,1$, by an expression independent of $\tau$, which gives the absolute continuity in each compact interval of
$(0,\frac{\tau^*}{8}]$. Now take a point $\tau\in(0,\frac{\tau^*}{8}]$ where the derivative of $\tau\to\mathscr{E}_{t+\tau,\tau}$ exists. Considering lateral limits, the equality (\ref{eq:formula-of-derivative}) follows by using estimates (\ref{eq:differentiability-property-aux1})-(\ref{eq:differentiability-property-aux2}) and that $d(u_{\tau}^{t+\tau},u_{\tau_k}^{t+\tau_k})\to0$ as $\tau_k\rightarrow\tau$ (see Lemma \ref{lem:continuity-minimizer} b)).
\end{proof}

As a consequence, we have the following corollary.

\begin{cor}
\label{cor:integral-identity} Assume the same hypotheses of Proposition
\ref{prop:differentiability-property1}. Then, for $u\in\mathbf{D}$, we have
the identity
\begin{equation}
\label{eq:integral-identity}\frac{d^{2}(u,u_{\tau}^{t+\tau})}{2\tau}+\int
_{0}^{\tau}{\frac{d^{2}(u,u_{r}^{t+r})}{2r^{2}}\ dr}=\int_{0}^{\tau}%
{\partial_{t} \mathcal{E}(t+r,u_{r}^{t+r})\ dr}+\mathcal{E}(t,u)-\mathcal{E}%
(t+\tau,u_{\tau}^{t+\tau}).
\end{equation}

\end{cor}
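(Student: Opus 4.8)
The plan is to integrate the differentiability formula (\ref{eq:formula-of-derivative}) from Proposition \ref{prop:differentiability-property1} over the parameter interval $(0,\tau]$ and then take care of the boundary behavior as the lower limit tends to $0$. Concretely, write $g(r):=\mathscr{E}_{t+r,r}(u)$ for $r\in(0,\tau^{\ast}/8]$. By the proposition, $g$ is locally absolutely continuous on that interval, so for any $0<\delta<\tau$ the fundamental theorem of calculus gives
\[
g(\tau)-g(\delta)=\int_{\delta}^{\tau}g'(r)\,dr=\int_{\delta}^{\tau}\partial_{t}\mathcal{E}(t+r,u_{r}^{t+r})\,dr-\int_{\delta}^{\tau}\frac{d^{2}(u,u_{r}^{t+r})}{2r^{2}}\,dr.
\]
Since $g(r)=\mathcal{E}(t+r,u_{r}^{t+r})+\frac{d^{2}(u,u_{r}^{t+r})}{2r}$ by definition of the Moreau-Yosida approximation (recall $u_{r}^{t+r}$ is the minimizer, which exists by Lemma \ref{lemma:existence-minimizer}), substituting $g(\tau)$ and $g(\delta)$ yields the desired identity up to the terms evaluated at $\delta$.

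The main point is then to show that $g(\delta)=\mathcal{E}(t+\delta,u_{\delta}^{t+\delta})+\frac{d^{2}(u,u_{\delta}^{t+\delta})}{2\delta}\to\mathcal{E}(t,u)$ as $\delta\to0^{+}$, and that the integral $\int_{0}^{\tau}\frac{d^{2}(u,u_{r}^{t+r})}{2r^{2}}\,dr$ converges (is finite). First I would note that, comparing $\mathbf{E}(t+\delta,\delta,u;u_{\delta}^{t+\delta})$ with the competitor $v=u\in\mathbf{D}$, one gets $\mathcal{E}(t+\delta,u_{\delta}^{t+\delta})+\frac{d^{2}(u,u_{\delta}^{t+\delta})}{2\delta}\le\mathcal{E}(t+\delta,u)$, and by \textbf{E3} the right side tends to $\mathcal{E}(t,u)$. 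For the lower bound, use Lemma \ref{lem:ambrosio-desig} together with \textbf{E4} (and Remark \ref{Rem-aux-1}) to bound $\mathcal{E}(t+\delta,u_{\delta}^{t+\delta})$ from below by an expression controlled by $\mathscr{E}_{t+\delta,\tau^{\ast}}(u^{\ast})$ and $d^{2}(u^{\ast},u_{\delta}^{t+\delta})$; combined with the estimate (\ref{eq:estimative-aux2}) of Lemma \ref{lem:minimizer-estimative1}, which bounds $d^{2}(u_{\delta}^{t+\delta},u^{\ast})$ uniformly in $\delta$, this gives $\liminf_{\delta\to0^{+}}g(\delta)\ge\mathcal{E}(t,u)$. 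Sandwiching forces $\lim_{\delta\to0^{+}}g(\delta)=\mathcal{E}(t,u)$; in particular $\frac{d^{2}(u,u_{\delta}^{t+\delta})}{2\delta}\to0$, so $d(u,u_{\delta}^{t+\delta})\to0$.

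Having established $\lim_{\delta\to0^{+}}g(\delta)=\mathcal{E}(t,u)$, taking $\delta\to0^{+}$ in the displayed identity yields
\[
\mathcal{E}(t+\tau,u_{\tau}^{t+\tau})+\frac{d^{2}(u,u_{\tau}^{t+\tau})}{2\tau}-\mathcal{E}(t,u)=\int_{0}^{\tau}\partial_{t}\mathcal{E}(t+r,u_{r}^{t+r})\,dr-\int_{0}^{\tau}\frac{d^{2}(u,u_{r}^{t+r})}{2r^{2}}\,dr,
\]
where the improper integral on the right is well-defined as a limit; rearranging gives exactly (\ref{eq:integral-identity}). To justify passing to the limit inside $\int_{\delta}^{\tau}\partial_{t}\mathcal{E}(t+r,u_{r}^{t+r})\,dr$, I would use \textbf{E3}: it bounds $|\partial_{t}\mathcal{E}(t+r,\cdot)|$ pointwise by $\beta(t+r)(1+d^{2}(\cdot,u^{\ast}))$ a.e., and since $\beta\in L^{1}_{loc}$ while $d^{2}(u_{r}^{t+r},u^{\ast})$ is bounded on $(0,\tau]$ by (\ref{eq:estimative-aux2}), dominated convergence applies; the remaining integral $\int_{0}^{\tau}\frac{d^{2}(u,u_{r}^{t+r})}{2r^{2}}\,dr$ then inherits finiteness from the identity itself (all other terms being finite).

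I expect the main obstacle to be the boundary analysis as $\delta\to0^{+}$: showing the lower semicontinuity bound $\liminf g(\delta)\ge\mathcal{E}(t,u)$ cleanly. The subtlety is that $\mathcal{E}(t,\cdot)$ need not be bounded below, so one cannot just drop the $\mathcal{E}$-term; one must genuinely combine the lower bound from Lemma \ref{lem:ambrosio-desig}/\textbf{E4} with the uniform quadratic-distance control (\ref{eq:estimative-aux2}) and the already-established $d(u,u_{\delta}^{t+\delta})\to0$ (which itself comes from the upper bound plus lower semicontinuity \textbf{E1} of $\mathcal{E}(t,\cdot)$). Once the convergence $g(\delta)\to\mathcal{E}(t,u)$ is secured, the rest is a routine integration-and-limit argument.
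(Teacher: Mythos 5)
Your overall skeleton coincides with the paper's proof: integrate \eqref{eq:formula-of-derivative} over $[\delta,\tau]$, use the local absolute continuity from Proposition \ref{prop:differentiability-property1}, and reduce everything to showing $g(\delta)=\mathscr{E}_{t+\delta,\delta}(u)\to\mathcal{E}(t,u)$; your upper bound (competitor $v=u$ plus \textbf{E3}) and your treatment of the $\partial_t\mathcal{E}$ integral by \textbf{E3}-domination are exactly in the paper's spirit. The gap is in the lower bound, which is the decisive step. The ingredients you invoke for it --- Lemma \ref{lem:ambrosio-desig} with \textbf{E4} and the uniform bound \eqref{eq:estimative-aux2} on $d^2(u^*,u_\delta^{t+\delta})$ --- only bound $\mathcal{E}(t+\delta,u_\delta^{t+\delta})$, hence $g(\delta)$, from below by a constant independent of $\delta$; they cannot yield $\liminf_{\delta\to0^+}g(\delta)\ge\mathcal{E}(t,u)$, since $\mathcal{E}(t,u)$ may exceed any such constant (the functional is not bounded below). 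What that uniform bound does buy, combined with your upper bound, is $d^2(u,u_\delta^{t+\delta})\le C\delta\to0$; this is how the paper obtains convergence of the minimizers (via Lemma \ref{lem:ambrosio-desig} and \textbf{E4}), and it does not follow from ``the upper bound plus lower semicontinuity \textbf{E1}'' as you assert, because on a non-compact metric space lower semicontinuity gives no uniform lower bound along a bounded sequence.

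The step you never actually perform is the paper's key one: having $u_\delta^{t+\delta}\to u$, use \textbf{E3} to write $\mathcal{E}(t+\delta,u_\delta^{t+\delta})\ge\mathcal{E}(t,u_\delta^{t+\delta})-\int_t^{t+\delta}{\beta(r)\,dr}\,(1+d^2(u^*,u_\delta^{t+\delta}))$ and then apply the lower semicontinuity \textbf{E1} of $\mathcal{E}(t,\cdot)$ at $u$ to get $\liminf_{\delta\to0^+}\mathcal{E}(t+\delta,u_\delta^{t+\delta})\ge\mathcal{E}(t,u)$, whence $\liminf_{\delta\to0^+}g(\delta)\ge\mathcal{E}(t,u)$. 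Note also that your ``in particular $d^2(u,u_\delta^{t+\delta})/2\delta\to0$'' drawn from $g(\delta)\to\mathcal{E}(t,u)$ secretly uses this same lsc estimate, while your closing paragraph uses $d(u,u_\delta^{t+\delta})\to0$ as an input to the lower bound; as written the argument is therefore circular precisely at the point where the corollary is proved. With the order fixed (constant lower bound $\Rightarrow$ $d\to0$ $\Rightarrow$ \textbf{E3}+\textbf{E1} $\Rightarrow$ sharp lower bound), the rest of your proposal is the paper's argument.
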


\begin{proof}
By integrating (\ref{eq:formula-of-derivative}) from $\tau_0$ to $\tau\leq\frac{\tau^*}{8}$, it follows that
\begin{eqnarray*}
\mathscr{E}_{t+\tau,\tau}(u)-\mathscr{E}_{t+\tau_0,\tau_0}(u)+\int_{\tau_0}^{\tau}{\frac{d^2(u,u_{r}^{t+r})}{2r^2}\
dr}=\int_{\tau_0}^{\tau}{\partial_t\mathcal{E}(t+r,u_{r}^{t+r})\ dr}.
\end{eqnarray*}
In view of the definitions of $\mathscr{E}_{t,\tau}(u)$ and $u_{\tau}^{t}$, and since the above integrals are finite as $\tau_0\to0$, the remainder of the proof is to show that $\mathscr{E}_{t+\tau_0,\tau_0}(u)\to
\mathcal{E}(t,u)$ as $\tau_0\to0$, for each fixed $t>0$. In fact, note that
$$\mathscr{E}_{t+\tau_0,\tau_0}(u)\leq\mathcal{E}(t+\tau_0,u),$$
and so
\begin{equation}
\limsup_{\tau_0\to0^+}\mathscr{E}_{t+\tau_0,\tau_0}(u)\leq\mathcal{E}(t,u).\label{aux-gen-10}
\end{equation}
Also, we can conclude from \eqref{aux-gen-10} and Lemma \ref{lem:ambrosio-desig} that $d(u,u_{\tau_0}^{t+\tau_0})\to0$, as $\tau_0
\to0$. Using the lower semicontinuity of $\mathcal{E}$, we get
\begin{eqnarray*}
\mathcal{E}(t,u)&\geq&\limsup_{\tau_0\to0}\mathscr{E}_{t+\tau_0,\tau_0}(u)\\
&\geq&\liminf_{\tau_0\to0}\left(\mathcal{E}(t,u_{\tau_0}^{t+\tau_0})-\int_t^{t+\tau_0}{\beta(r)\ dr}(1+d^2(u^*,
u_{\tau_0}^{t+\tau_0}))\right)\\
&\geq&\mathcal{E}(t,u),
\end{eqnarray*}
as desired.
\end{proof}

\begin{rem}
\label{remark:continuity-in-zero-of-minimizer} In the last proof, we have
showed in particular that $u_{\tau}^{t+\tau}\to u$ as $\tau\to0$, when
$u\in\mathbf{D}$.
\end{rem}

Now, we recall a discrete Gronwall lemma.

\begin{lem}
[\cite{Ambrosio} Lemma 3.2.4]\label{lem:discrete-gronwall} Let $A,\alpha
\in[0,\infty)$ and, for $n\geq1$, let $a_{n},\beta_{n}\in[0,\infty)$ satisfy
\[
a_{n}\leq A+\alpha\sum_{j=1}^{n}\beta_{j}a_{j},\ \forall n\geq1,\ \text{with
}m=\sup_{n\in\mathbb{N}}\alpha\beta_{n}<1.
\]
Then, denoting $B:=A/(1-m)$, $\theta:=\alpha/(1-m)$ and $\beta_{0}=0$, we
have that
\[
a_{n}\leq B e^{\theta\sum_{i=0}^{n-1}{\beta_{i}}},\ n\geq1.
\]

\end{lem}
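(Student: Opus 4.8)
The plan is a two-stage argument: first absorb the diagonal term $\alpha\beta_{n}a_{n}$ into the left-hand side, then run an induction on $n$. For the first stage, split the sum in the hypothesis as $\alpha\sum_{j=1}^{n}\beta_{j}a_{j}=\alpha\sum_{j=1}^{n-1}\beta_{j}a_{j}+\alpha\beta_{n}a_{n}$; since $\alpha\beta_{n}\le m<1$ we have $1-\alpha\beta_{n}\ge 1-m>0$, so dividing and using $\frac{1}{1-\alpha\beta_{n}}\le\frac{1}{1-m}$ together with the nonnegativity of all the quantities involved yields
\[
a_{n}\le B+\theta\sum_{j=1}^{n-1}\beta_{j}a_{j},\qquad n\ge 1,
\]
with the convention that the empty sum (at $n=1$) is zero. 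This is the form on which the induction acts.

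For the second stage I would prove by induction on $n$ the claim $a_{n}\le B\,e^{\theta\sigma_{n-1}}$, where $\sigma_{k}:=\sum_{i=0}^{k}\beta_{i}$ and $\beta_{0}=0$ (so $\sigma_{0}=0$). The base case $n=1$ is immediate from the displayed inequality, since the sum there is empty: $a_{1}\le B=B\,e^{\theta\sigma_{0}}$. For the inductive step, substituting the hypothesis $a_{j}\le B\,e^{\theta\sigma_{j-1}}$ for $1\le j\le n-1$ into the displayed bound gives $a_{n}\le B\bigl(1+\theta\sum_{j=1}^{n-1}\beta_{j}e^{\theta\sigma_{j-1}}\bigr)$, so it suffices to establish the elementary inequality
\[
1+\theta\sum_{j=1}^{k}\beta_{j}e^{\theta\sigma_{j-1}}\le e^{\theta\sigma_{k}},\qquad k\ge 0 .
\]
This is itself proved by a short induction on $k$: it is an equality at $k=0$, and the step follows from $1+x\le e^{x}$ with $x=\theta\beta_{k+1}\ge 0$, since $e^{\theta\sigma_{k}}+\theta\beta_{k+1}e^{\theta\sigma_{k}}=e^{\theta\sigma_{k}}(1+\theta\beta_{k+1})\le e^{\theta\sigma_{k}}e^{\theta\beta_{k+1}}=e^{\theta\sigma_{k+1}}$. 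Combining the two inductions yields $a_{n}\le B\,e^{\theta\sigma_{n-1}}=B\,e^{\theta\sum_{i=0}^{n-1}\beta_{i}}$, which is the assertion.

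The argument is entirely elementary and I do not expect any genuine obstacle; the only point requiring care is the index bookkeeping — keeping track of which sums are empty (those at $n=1$ and $k=0$) and of the role of the convention $\beta_{0}=0$, which is exactly what makes $e^{\theta\sigma_{0}}=1$ and aligns the exponent $\sum_{i=0}^{n-1}\beta_{i}$ in the statement with the running index of the induction. Beyond $1+x\le e^{x}$, the hypothesis $m<1$ is used exactly once, to divide by $1-\alpha\beta_{n}\ge 1-m>0$.
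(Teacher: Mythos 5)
Your proof is correct: the absorption of the diagonal term $\alpha\beta_{n}a_{n}$ using $1-\alpha\beta_{n}\ge 1-m>0$, followed by the double induction with $1+x\le e^{x}$, is exactly the standard argument, and the index bookkeeping (empty sums at $n=1$, the convention $\beta_{0}=0$) is handled properly. Note that the paper itself gives no proof — it simply quotes the lemma from Ambrosio--Gigli--Savar\'e \cite[Lemma 3.2.4]{Ambrosio} — and your argument is essentially the one found in that reference, so there is nothing further to reconcile.
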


The variational scheme (\ref{eq:minimization-problem}) will be the base for
constructing approximate solutions for (\ref{eq:sist-grad-flow}%
)-(\ref{eq:sist-grad-flow-initial-data}). The below lemma can be seen as a
version of \cite[Lemma 3.2.2]{Ambrosio} for the case of time-dependent
functionals and gives a first set of estimates in order to control approximations.

\begin{lem}
\label{lemma:limit-sec-discret} Assume \textbf{E1} to \textbf{E5}. Let
$\boldsymbol{\tau}=\{0=t_{\boldsymbol{\tau}}^{0}<t_{\boldsymbol{\tau}}%
^{1}<\cdots<t_{\boldsymbol{\tau}}^{j}<\cdots\}$ be a partition of $[0,\infty
)$, $\tau_{j}:=t_{\boldsymbol{\tau}}^{j}-t_{\boldsymbol{\tau}}^{j-1},$ and
$|\boldsymbol{\tau}|=\sup_{j}\left\vert \tau_{j}\right\vert $. For $T>0$ and
$\tau^{*}<\frac{1}{\lambda_{T+ 1}^{-}}\,$, choose $N\in\mathbb{N}$ such that
$T\in\lbrack t_{\boldsymbol{\tau}}^{N-1},t_{\boldsymbol{\tau}}^{N})$. Suppose
that there is a constant $S>0$ satisfying
\begin{equation}
\mathcal{E}(0,U_{\boldsymbol{\tau}}^{0})\leq S\,\ \text{and }d^{2}(u^{\ast
},U_{\boldsymbol{\tau}}^{0})\leq S. \label{eq:limit-sec-discret}%
\end{equation}
Then, there exists a constant $C=C(S,T,\tau^{\ast},\mathcal{E})>0$ such that
\begin{equation}
d^{2}(u^{\ast},U_{\boldsymbol{\tau}}^{n})\leq C,\hspace{0.5cm}\sum_{j=1}%
^{n}{\frac{d^{2}(U_{\boldsymbol{\tau}}^{j},U_{\boldsymbol{\tau}}^{j-1})}%
{2\tau_{j}}}\leq\sum_{j=1}^{n}{\left(  \mathcal{E}(t_{\boldsymbol{\tau}}%
^{j},U_{\boldsymbol{\tau}}^{j-1})-\mathcal{E}(t_{\boldsymbol{\tau}}%
^{j},U_{\boldsymbol{\tau}}^{j})\right)  }\leq C, \label{eq:limit-sec-discret1}%
\end{equation}
for all $1\leq n\leq N$ and $|\boldsymbol{\tau}|$ sufficiently small.
\end{lem}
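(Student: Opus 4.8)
The plan is to mimic the classical argument in \cite[Lemma 3.2.2]{Ambrosio}, using the minimality of $U_{\boldsymbol{\tau}}^{j}$ together with the time-version \textbf{E3} of the estimates to absorb the residual terms produced by the $t$-dependence of $\mathcal{E}$. First I would exploit the fact that $U_{\boldsymbol{\tau}}^{j}$ is the minimizer of $v\mapsto \mathbf{E}(t_{\boldsymbol{\tau}}^{j},\tau_{j},U_{\boldsymbol{\tau}}^{j-1};v)$, so in particular
\[
\mathcal{E}(t_{\boldsymbol{\tau}}^{j},U_{\boldsymbol{\tau}}^{j})+\frac{d^{2}(U_{\boldsymbol{\tau}}^{j-1},U_{\boldsymbol{\tau}}^{j})}{2\tau_{j}}\leq \mathcal{E}(t_{\boldsymbol{\tau}}^{j},U_{\boldsymbol{\tau}}^{j-1}),
\]
which rearranges to $\dfrac{d^{2}(U_{\boldsymbol{\tau}}^{j},U_{\boldsymbol{\tau}}^{j-1})}{2\tau_{j}}\leq \mathcal{E}(t_{\boldsymbol{\tau}}^{j},U_{\boldsymbol{\tau}}^{j-1})-\mathcal{E}(t_{\boldsymbol{\tau}}^{j},U_{\boldsymbol{\tau}}^{j})$; this already gives the middle inequality of \eqref{eq:limit-sec-discret1}. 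Summing over $j=1,\dots,n$ produces a telescoping-like sum, except the energy is evaluated at shifting times: $\sum_{j}\big(\mathcal{E}(t_{\boldsymbol{\tau}}^{j},U_{\boldsymbol{\tau}}^{j-1})-\mathcal{E}(t_{\boldsymbol{\tau}}^{j},U_{\boldsymbol{\tau}}^{j})\big)$. I would insert and subtract $\mathcal{E}(t_{\boldsymbol{\tau}}^{j-1},U_{\boldsymbol{\tau}}^{j-1})$ to rewrite this as $\mathcal{E}(0,U_{\boldsymbol{\tau}}^{0})-\mathcal{E}(t_{\boldsymbol{\tau}}^{n},U_{\boldsymbol{\tau}}^{n})+\sum_{j=1}^{n}\big(\mathcal{E}(t_{\boldsymbol{\tau}}^{j},U_{\boldsymbol{\tau}}^{j-1})-\mathcal{E}(t_{\boldsymbol{\tau}}^{j-1},U_{\boldsymbol{\tau}}^{j-1})\big)$, and bound the last sum by $\int_{0}^{t_{\boldsymbol{\tau}}^{n}}\beta(r)\,dr\cdot\max_{j<n}(1+d^{2}(U_{\boldsymbol{\tau}}^{j},u^{\ast}))$ via \textbf{E3}.

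Next I would control $-\mathcal{E}(t_{\boldsymbol{\tau}}^{n},U_{\boldsymbol{\tau}}^{n})$ from below (equivalently, bound the energy from below along the scheme) and simultaneously control $d^{2}(u^{\ast},U_{\boldsymbol{\tau}}^{n})$. For this I would use Lemma \ref{lem:ambrosio-desig} / assumption \textbf{E4}: since $\mathscr{E}_{t,\tau^{\ast}}(u^{\ast})$ is bounded below on $[0,T]$ (and, via Remark \ref{Rem-aux-1}, so is $\mathscr{E}_{t,\tau}(u^{\ast})$ for $0<\tau<\tau^{\ast}$), the inequality $d^{2}(u,v)\leq \frac{4\tau^{\ast}\tau}{\tau^{\ast}-\tau}\big(\mathcal{E}(v)+\frac{d^{2}(u,v)}{2\tau}-\mathscr{E}_{\tau^{\ast}}(u^{\ast})+\frac{1}{\tau^{\ast}-\tau}d^{2}(u^{\ast},u)\big)$ applied with $u=U_{\boldsymbol{\tau}}^{j-1}$, $v=U_{\boldsymbol{\tau}}^{j}$, $\tau=\tau_{j}$ yields $d^{2}(U_{\boldsymbol{\tau}}^{j-1},U_{\boldsymbol{\tau}}^{j})\leq C\tau_{j}\big(\mathcal{E}(t_{\boldsymbol{\tau}}^{j},U_{\boldsymbol{\tau}}^{j-1})+\text{const}+d^{2}(u^{\ast},U_{\boldsymbol{\tau}}^{j-1})\big)$ for $|\boldsymbol{\tau}|$ small enough that $\tau_{j}\leq\tau^{\ast}/2$, say. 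Combining this with the triangle inequality $d^{2}(u^{\ast},U_{\boldsymbol{\tau}}^{j})\leq (1+\epsilon)d^{2}(u^{\ast},U_{\boldsymbol{\tau}}^{j-1})+(1+\epsilon^{-1})d^{2}(U_{\boldsymbol{\tau}}^{j-1},U_{\boldsymbol{\tau}}^{j})$ and with the energy estimate above, I expect to arrive at a recursive inequality of the form
\[
a_{n}\leq A+\alpha\sum_{j=1}^{n}\beta_{j}a_{j},\qquad a_{n}:=d^{2}(u^{\ast},U_{\boldsymbol{\tau}}^{n})+\text{(energy-type term)},
\]
with $\beta_{j}$ essentially $\tau_{j}$ or $\int_{t_{\boldsymbol{\tau}}^{j-1}}^{t_{\boldsymbol{\tau}}^{j}}\beta(r)\,dr$, so that $\sup_{j}\alpha\beta_{j}<1$ once $|\boldsymbol{\tau}|$ is small (using $\beta\in L^{1}_{loc}$, so that the individual increments $\int_{t_{\boldsymbol{\tau}}^{j-1}}^{t_{\boldsymbol{\tau}}^{j}}\beta$ are small). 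The discrete Gronwall Lemma \ref{lem:discrete-gronwall} then gives $a_{n}\leq B e^{\theta \sum_{i<n}\beta_{i}}$, and since $\sum_{i<N}\beta_{i}\lesssim T+\int_{0}^{T}\beta(r)\,dr$ is a fixed finite quantity, this bounds $d^{2}(u^{\ast},U_{\boldsymbol{\tau}}^{n})$ by a constant $C=C(S,T,\tau^{\ast},\mathcal{E})$, which is the first assertion of \eqref{eq:limit-sec-discret1}. Feeding this bound back into the energy sum estimate of the first paragraph gives the remaining inequality $\sum_{j=1}^{n}\frac{d^{2}(U_{\boldsymbol{\tau}}^{j},U_{\boldsymbol{\tau}}^{j-1})}{2\tau_{j}}\leq C$.

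The main obstacle I anticipate is the bookkeeping needed to close the Gronwall loop: the residual term from \textbf{E3} involves $d^{2}(u^{\ast},U_{\boldsymbol{\tau}}^{j})$ with the \emph{same} index $j$ as the quantity being estimated (not just $j-1$), so I must be careful that the coefficient multiplying $a_{n}$ on the right is strictly less than $1$ — this is exactly where the smallness of $|\boldsymbol{\tau}|$ and of the increments of $\int\beta$ enters, and where the restriction $\tau^{\ast}<\frac{1}{\lambda_{T+1}^{-}}$ (ensuring all minimizers are well defined up to time slightly beyond $T$, via Remark \ref{Rem-aux-1}) is used. A secondary technical point is that the times $t_{\boldsymbol{\tau}}^{j}$ range up to $t_{\boldsymbol{\tau}}^{N}$, which may exceed $T$; one keeps everything within $[0,T+\tau^{\ast}]$ and absorbs the extra interval into the constant. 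Apart from these, every step is a routine rearrangement using minimality, the triangle inequality, \textbf{E3}, and Lemma \ref{lem:ambrosio-desig}.
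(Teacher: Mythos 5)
Your proposal is correct and follows essentially the paper's own argument: minimality of $U^{j}_{\boldsymbol{\tau}}$ gives the middle inequality, telescoping plus \textbf{E3} controls the time-shifted energy sum, the lower bound coming from \textbf{E4} on $[0,T+\tau^{*}]$ handles $-\mathcal{E}(t^{n}_{\boldsymbol{\tau}},U^{n}_{\boldsymbol{\tau}})$, and the discrete Gronwall Lemma \ref{lem:discrete-gronwall} with $|\boldsymbol{\tau}|$ small closes the loop. The only cosmetic deviation is that the paper obtains the recursion for $d^{2}(u^{\ast},U^{n}_{\boldsymbol{\tau}})$ from the prepared estimate \eqref{eq:estimative-aux1} of Lemma \ref{lem:minimizer-estimative1} with $\epsilon=\tau^{*}/2$ (so the distance increments are absorbed directly by the telescoping energy differences), whereas you route through the second inequality of Lemma \ref{lem:ambrosio-desig} plus a weighted triangle inequality, which forces the extra per-step bound on $\mathcal{E}(t^{j}_{\boldsymbol{\tau}},U^{j-1}_{\boldsymbol{\tau}})$ you fold into the Gronwall quantity.
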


\begin{proof}
By the minimizer property of $U^j_{\boldsymbol{\tau}}$ and {\bf E3}, we get
\begin{eqnarray}
\sum_{j=1}^{n}{\frac{d^2(U_{\boldsymbol{\tau}}^{j},U_{\boldsymbol{\tau}}^{j-1})}{2\tau_j}}
&\leq&\sum_{j=1}^{n}{\left(  \mathcal{E}(t_{\boldsymbol{\tau}}%
^{j},U_{\boldsymbol{\tau}}^{j-1})-\mathcal{E}(t_{\boldsymbol{\tau}}%
^{j},U_{\boldsymbol{\tau}}^{j})\right)}\nonumber\\
&\leq&\mathcal{E}(0,U_{\boldsymbol{\tau}}^{0})-\mathcal{E}(t_{\boldsymbol{\tau}}^{n},U_{\boldsymbol{\tau}}^{n})\nonumber\\
&\ &+\sum_{j=1}^n{\int_{t_{\boldsymbol{\tau}}^{j-1}}^{t_{\boldsymbol{\tau}}^{j}}{\beta(r)\ dr(1+d^2(u^*,U_
{\boldsymbol{\tau}}^{j-1}))}},\label{eq:limit-sec-discret2}
\end{eqnarray}
for $1\leq n\leq N$. Using the first estimate in Lemma \ref{lem:minimizer-estimative1} with $u=U
_{\boldsymbol{\tau}}^{j-1}$, $u_{\tau}^{t+\tau}=U_{\boldsymbol{\tau}}^{j}$ and $\epsilon=\frac{\tau^*}{2}$, we obtain
\begin{eqnarray*}
\frac{1}{2}d^2(u^*,U_{\boldsymbol{\tau}}^{n})-\frac{1}{2}d^2(u^*,U_{\boldsymbol{\tau}}^{0})&=&
\sum_{j=1}^n{\frac{1}{2}d^2(u^*,U_{\boldsymbol{\tau}}^{j})-\frac{1}{2}d^2(u^*,U_{\boldsymbol{\tau}}^{j-1})}\\
&\leq&\frac{\tau^*}{2}\left(\mathcal{E}(0,U_{\boldsymbol{\tau}}^{0})-\inf_{0\leq t\leq T+\tau^*}\mathscr{E}_{t,\tau^*}(u^*)
\right)+\frac{d^2(u^*,U_{\boldsymbol{\tau}}^{n})}{4}\\
&\ &+\sum_{j=1}^n{\tau_j\frac{d^2(U_{\boldsymbol{\tau}}^{j},u^*)}{\tau^*}
+\left(\frac{\tau^*}{2}\int_{t_{\boldsymbol{\tau}}^{j-1}}^{t_{\boldsymbol{\tau}}^{j}}{\beta(r)\ dr}\right){d^2(u^*,U_{\boldsymbol{\tau}}^{j-1})}}\\
&\ &+\frac{\tau^*}{2}\int_{0}^{T+\tau^*}{\beta(r)\ dr}.
\end{eqnarray*}
Rearranging terms, it follows that
\begin{eqnarray}
d^2(u^*,U_{\boldsymbol{\tau}}^{n})&\leq&2\tau^*\left(S-
\inf_{0\leq t\leq T+\tau^*}\mathscr{E}_{t,\tau^*}(u^*)\right)+2\left(1+\tau^*\int_
{0}^{\tau^*}{\beta(r)\ dr}\right)S\nonumber\\
&\ &+2\tau^*\int_{0}^{T+\tau^*}{\beta(r)\ dr}
+4\sum_{j=1}^{n}{\left(\frac{\tau_j}{\tau^*}+\frac{\tau^*}{2}\int_
{t_{\boldsymbol{\tau}}^{j}}^{t_{\boldsymbol{\tau}}^{j+1}}{\beta(r)\ dr}\right)d^2(u^*,U_{\boldsymbol{\tau}}^{j})}\nonumber\\
&\leq& A(S,T,\tau^*,\mathcal{E})+4\sum_{j=1}^{n}{\beta_j d^2(u^*,U_{\boldsymbol{\tau}}^{j})}, \label{aux-est-2}
\end{eqnarray}
for some constant $A=A(S,T,\tau^*,\mathcal{E})>0$, where $\beta_j:=\frac{\tau_j}{\tau^*}+\frac{\tau^*}{2}\int_
{t_{\boldsymbol{\tau}}^{j}}^{t_{\boldsymbol{\tau}}^{j+1}}{\beta(r)\ dr}$. By using an argument of absolute continuity, we have that $\displaystyle\max_{1\leq n\leq N}{4\beta_j}<1$, for $\vert\boldsymbol{\tau}\vert$ small enough. Then, the first estimate in (\ref{eq:limit-sec-discret1}) follows by using Lemma \ref{lem:discrete-gronwall} in (\ref{aux-est-2}). For the second one, we use (\ref{eq:limit-sec-discret2}) and observe that
\begin{eqnarray*}
\mathcal{E}(0,U_{\boldsymbol{\tau}}^{0})-\mathcal{E}(t_{\boldsymbol{\tau}}^{n},U_{\boldsymbol{\tau}}^{n})&\leq&
S-\inf_{0\leq t\leq T+\tau^*}\mathscr{E}_{t,\tau^*}(u^*)+\frac{d^2(u^*,U_{\boldsymbol{\tau}}^{n})}{2\tau^*},
\end{eqnarray*}
which is bounded. This concludes the proof.
\end{proof}

\section{A priori estimates}

\label{sect:apriori-estimates}\hspace{0.5cm}It is well known that, under
convexity hypotheses, the problem (\ref{eq:sist-grad-flow}%
)-(\ref{eq:sist-grad-flow-initial-data}) admits a formulation based in a
differential inequality. In fact, in the case when $X$ is a Euclidean space
and the functional $\mathcal{E}(t,\cdot)$ is $\lambda(t)-$convex, the curve
solution $u(t)$ satisfies
\begin{equation}
\frac{1}{2}\frac{d}{dt}\Vert u(t)-v\Vert^{2}+\frac{\lambda(t)}{2}\Vert
u(t)-v\Vert^{2}+\mathcal{E}(t,u(t))\leq\mathcal{E}(t,v),
\label{eq:formulation-convex}%
\end{equation}
for all $v\in X$. Assuming the hypothesis of convexity \textbf{E5}$,$ one can
derive a discrete version of (\ref{eq:formulation-convex}). In fact, for each
fixed $t>0,$ we have (see \cite[Theorem 4.1.2]{Ambrosio})
\begin{equation}
\frac{1}{2\tau}d^{2}(u_{\tau}^{t},v)-\frac{1}{2\tau}d^{2}(u,v)+\frac{1}%
{2}\lambda(t)d^{2}(u_{\tau}^{t},v)\leq\mathcal{E}(t,v)-\mathscr{E}_{t,\tau
}(u). \label{eq:formulation-convex-discret}%
\end{equation}

Now we define a set of interpolating functions that will be useful in the
convergence of approximate solutions. In comparison with \cite{Ambrosio}, the time-dependence of
$\mathcal{E}$ generates new residual terms in the estimates and leads us to
define the interpolations $\mathcal{T}_{\boldsymbol{\tau}}$ and $\widetilde
{\lambda}_{\boldsymbol{\tau}}(t)$ in \eqref{eq:tempo-disc}-\eqref{eq:lambda-discret} below. The function $\widetilde
{\lambda}_{\boldsymbol{\tau}}(t)$ is necessary in order to deal with the time-dependence on the parameter $\lambda$.

Let $\boldsymbol{\tau}=\{0=t_{\boldsymbol{\tau}}^{0}<t_{\boldsymbol{\tau}}%
^{1}<\cdots<t_{\boldsymbol{\tau}}^{n}<\cdots\}$ be a partition of $[0,\infty)$
and $\tau_{n}:=t_{\boldsymbol{\tau}}^{n}-t_{\boldsymbol{\tau}}^{n-1}$. Consider $T>0$, $N\in\mathbb{N}$ such that $T\in (t_{\boldsymbol{\tau}}^{N-1},t_{\boldsymbol{\tau}}^{N}]$, and the following functions defined on the interval $[0,T]$:
\begin{align}
\mathcal{T}_{\boldsymbol{\tau}}(t)  &  :=t_{\boldsymbol{\tau}}^{n}, \text{ \ for \ } t\in(t_{\boldsymbol{\tau}}^{n-1},t_{\boldsymbol{\tau}}^{n}], \label{eq:tempo-disc}\\
\widetilde{\lambda}_{\boldsymbol{\tau}}(t)  &  :=\lambda(t_{\boldsymbol{\tau}}^{n}), \text{ \ for \ } t\in(t_{\boldsymbol{\tau}}^{n-1},t_{\boldsymbol{\tau}}^{n}], \label{eq:lambda-discret}\\
l_{\boldsymbol{\tau}}(t)  &  :=\frac{t-t_{\boldsymbol{\tau}}^{n-1}}{\tau_{n}}, \text{ \ for \ } t\in(t_{\boldsymbol{\tau}}^{n-1},t_{\boldsymbol{\tau}}^{n}], \label{eq:Ltau}\\
d_{\boldsymbol{\tau}}^{2}(t;V)  &  :=(1-l_{\boldsymbol{\tau}}(t))d^{2}(U_{\boldsymbol{\tau}}^{n-1},V)+l_{\boldsymbol{\tau}}(t)d^{2}%
(U_{\boldsymbol{\tau}}^{n},V), \text{ \ for \ } t\in(t_{\boldsymbol{\tau}}^{n-1},t_{\boldsymbol{\tau}}^{n}], \label{eq:dist-interpolat}\\
\mathcal{E}_{\boldsymbol{\tau}}(t)  &  :=(1-l_{\boldsymbol{\tau}}(t))\mathcal{E}(t_{\boldsymbol{\tau}}^{n-1},U_{\boldsymbol{\tau}}^{n-1})+l_{\boldsymbol{\tau}}(t)\mathcal{E}(t_{\boldsymbol{\tau}}^{n},U_{t_{\boldsymbol{\tau}}}^{n}), \text{ \ for \ } t\in(t_{\boldsymbol{\tau}}^{n-1},t_{\boldsymbol{\tau}}^{n}], \label{eq:functional-interpolation}\\
\underline{U}_{\boldsymbol{\tau}}(t)  &  :=U_{\boldsymbol{\tau}}^{n-1},\ \overline{U}_{\boldsymbol{\tau}}(t):=U_{\boldsymbol{\tau}}^{n}, \text{ \ for \ } t\in(t_{\boldsymbol{\tau}}^{n-1},t_{\boldsymbol{\tau}}^{n}]. \label{eq:sol-interpolante1}%
\end{align}
Also, we consider $\overline{U}_{\boldsymbol {\tau}}(0)=\underline{U}%
_{\boldsymbol{\tau}}(0):=U_{\boldsymbol{\tau}}^{0}$. The functions in
(\ref{eq:sol-interpolante1}) are called approximate solutions for
(\ref{eq:sist-grad-flow}) corresponding to the data $U_{\boldsymbol{\tau}}%
^{0}$.

Taking $u_{\tau}^{t}=U_{\boldsymbol{\tau}}^{n}$, $u=U_{\boldsymbol{\tau}}%
^{n-1}$ and $v=V,$ we can rewrite (\ref{eq:formulation-convex-discret}) as
\begin{align}
\frac{1}{2}\frac{d}{dt}d_{\boldsymbol{\tau}}^{2}(t;V)+\frac{\widetilde
{\lambda}_{\boldsymbol{\tau}}(t)}{2}d^{2}(\overline{U}_{\boldsymbol{\tau}}%
(t),V)+\mathcal{E}_{\boldsymbol{\tau}}(t)-\mathcal{E}(\mathcal{T}%
_{\boldsymbol{\tau}}(t),V)  &  \leq\label{eq:desig-var-interp}\\
\frac{1}{2}\mathscr{R}_{\boldsymbol{\tau}}(t)+(1-l_{\boldsymbol{\tau}}%
(t))(\mathcal{E}(t_{\boldsymbol{\tau}}^{n-1},U_{\boldsymbol{\tau}}%
^{n-1})-\mathcal{E}(t_{\boldsymbol{\tau}}^{n},U_{\boldsymbol{\tau}}^{n-1}))
&  ,\nonumber
\end{align}
for $t\in(t_{\boldsymbol{\tau}}^{n-1},t_{\boldsymbol{\tau}}^{n}]$, where
\begin{equation}
\frac{1}{2}\mathscr{R}_{\boldsymbol{\tau}}(t):=(1-l_{\boldsymbol{\tau}}%
(t))(\mathcal{E}(t_{\boldsymbol{\tau}}^{n},U_{\boldsymbol{\tau}}%
^{n-1})-\mathcal{E}(t_{\boldsymbol{\tau}}^{n},U_{\boldsymbol{\tau}}%
^{n}))-\frac{1}{2\tau_{n}}d^{2}(U_{\boldsymbol{\tau}}^{n-1}%
,U_{\boldsymbol{\tau}}^{n}). \label{eq-residuo-R}%
\end{equation}

With this notation, we have the next estimate.

\begin{lem}
\label{lem:desig-var-interp} Assume \textbf{E1} to \textbf{E5}. For a
partition $\boldsymbol{\tau}$ with $\vert\boldsymbol{\tau}\vert<\tau^{*}$,
define the residual term
\begin{equation}
\mathscr{D}_{\boldsymbol{\tau}}(t):=(1-l_{\boldsymbol{\tau}}(t))d(\overline
{U}_{\boldsymbol{\tau}}(t),\underline{U}_{\boldsymbol{\tau}}(t)).
\label{eq:residuo-D}%
\end{equation}
We have that
\begin{align}
&  \frac{1}{2}\frac{d}{dt}d_{\boldsymbol{\tau}}^{2}(t;V)+\frac{\widetilde
{\lambda}_{\boldsymbol{\tau}}(t)}{2}d_{\boldsymbol{\tau}}^{2}(t;V)-\left(
\widetilde{\lambda}_{\boldsymbol{\tau}}^{+}(t)d(\overline{U}%
_{\boldsymbol{\tau}}(t),\underline{U}_{\boldsymbol{\tau}}(t))+\widetilde
{\lambda}_{\boldsymbol{\tau}}^{-}(t)\mathscr{D}_{\boldsymbol{\tau}}(t)\right)
d_{\boldsymbol{\tau}}(t;V)\nonumber\label{eq:desig-var-interp1}\\
&  +\mathcal{E}_{\boldsymbol{\tau}}(t)-\mathcal{E}(\mathcal{T}%
_{\boldsymbol{\tau}}(t),V)\leq\frac{1}{2}\mathscr{R}_{\boldsymbol{\tau}}%
+\frac{\widetilde{\lambda}_{\boldsymbol{\tau}}^{-}(t)}{2}%
\mathscr{D}_{\boldsymbol{\tau}}^{2}+(1-l_{\boldsymbol{\tau}})(\mathcal{E}%
(t_{\boldsymbol{\tau}}^{n-1},\underline{U}_{\boldsymbol{\tau}})-\mathcal{E}%
(t_{\boldsymbol{\tau}}^{n},\underline{U}_{\boldsymbol{\tau}})),
\end{align}
for all $V\in\mathbf{D}$ and almost every point $t\in[0,T]$.
\end{lem}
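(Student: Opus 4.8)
The plan is to start from the already-established discrete variational inequality \eqref{eq:desig-var-interp}, which reads
\[
\frac{1}{2}\frac{d}{dt}d_{\boldsymbol{\tau}}^{2}(t;V)+\frac{\widetilde{\lambda}_{\boldsymbol{\tau}}(t)}{2}d^{2}(\overline{U}_{\boldsymbol{\tau}}(t),V)+\mathcal{E}_{\boldsymbol{\tau}}(t)-\mathcal{E}(\mathcal{T}_{\boldsymbol{\tau}}(t),V)\leq\frac{1}{2}\mathscr{R}_{\boldsymbol{\tau}}(t)+(1-l_{\boldsymbol{\tau}}(t))(\mathcal{E}(t_{\boldsymbol{\tau}}^{n-1},\underline{U}_{\boldsymbol{\tau}})-\mathcal{E}(t_{\boldsymbol{\tau}}^{n},\underline{U}_{\boldsymbol{\tau}})),
\]
and to replace the "wrong'' distance term $d^{2}(\overline{U}_{\boldsymbol{\tau}}(t),V)$, written in terms of the endpoint $\overline{U}_{\boldsymbol{\tau}}(t)=U_{\boldsymbol{\tau}}^{n}$, by the interpolated quantity $d_{\boldsymbol{\tau}}^{2}(t;V)$ that actually appears under the time derivative. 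The natural device is the triangle inequality together with the definition \eqref{eq:dist-interpolat} of $d_{\boldsymbol{\tau}}^{2}(t;V)$.

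First I would control the difference $d^{2}(\overline{U}_{\boldsymbol{\tau}}(t),V)-d_{\boldsymbol{\tau}}^{2}(t;V)$. By \eqref{eq:dist-interpolat},
\[
d_{\boldsymbol{\tau}}^{2}(t;V)-d^{2}(\overline{U}_{\boldsymbol{\tau}}(t),V)=(1-l_{\boldsymbol{\tau}}(t))\bigl(d^{2}(\underline{U}_{\boldsymbol{\tau}},V)-d^{2}(\overline{U}_{\boldsymbol{\tau}},V)\bigr),
\]
and writing $d(\underline{U}_{\boldsymbol{\tau}},V)\le d(\overline{U}_{\boldsymbol{\tau}},V)+d(\overline{U}_{\boldsymbol{\tau}},\underline{U}_{\boldsymbol{\tau}})$ (and symmetrically) gives
\[
\bigl|d^{2}(\underline{U}_{\boldsymbol{\tau}},V)-d^{2}(\overline{U}_{\boldsymbol{\tau}},V)\bigr|\le 2\,d(\overline{U}_{\boldsymbol{\tau}},\underline{U}_{\boldsymbol{\tau}})\,d_{\boldsymbol{\tau}}(t;V)+d^{2}(\overline{U}_{\boldsymbol{\tau}},\underline{U}_{\boldsymbol{\tau}}),
\]
where I use $d(\overline{U}_{\boldsymbol{\tau}},V)\le d_{\boldsymbol{\tau}}(t;V)$ for the linear-in-$V$ term — this follows from concavity of $\sqrt{\cdot}$, since $d(\overline{U}_{\boldsymbol{\tau}},V)^{2}\le d_{\boldsymbol{\tau}}^{2}(t;V)$ is false in general but $d(\overline{U}_{\boldsymbol{\tau}},V)\le\max\{d(\underline{U}_{\boldsymbol{\tau}},V),d(\overline{U}_{\boldsymbol{\tau}},V)\}$ and one bounds that max by $d_{\boldsymbol{\tau}}(t;V)+d(\overline{U}_{\boldsymbol{\tau}},\underline{U}_{\boldsymbol{\tau}})$; the surplus $d(\overline{U}_{\boldsymbol{\tau}},\underline{U}_{\boldsymbol{\tau}})$ is absorbed into the residual terms. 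Multiplying through by $\widetilde{\lambda}_{\boldsymbol{\tau}}(t)/2$ and splitting $\widetilde{\lambda}_{\boldsymbol{\tau}}=\widetilde{\lambda}_{\boldsymbol{\tau}}^{+}-\widetilde{\lambda}_{\boldsymbol{\tau}}^{-}$ (so that the sign of the correction is controlled: when $\widetilde{\lambda}_{\boldsymbol{\tau}}\ge 0$ one uses the $\ge$ direction, when $\widetilde{\lambda}_{\boldsymbol{\tau}}<0$ the $\le$ direction), the cross term $(1-l_{\boldsymbol{\tau}}(t))\,d(\overline{U}_{\boldsymbol{\tau}},\underline{U}_{\boldsymbol{\tau}})$ is exactly $\mathscr{D}_{\boldsymbol{\tau}}(t)$ from \eqref{eq:residuo-D}, producing the terms $\widetilde{\lambda}_{\boldsymbol{\tau}}^{+}(t)d(\overline{U}_{\boldsymbol{\tau}},\underline{U}_{\boldsymbol{\tau}})\,d_{\boldsymbol{\tau}}(t;V)$, $\widetilde{\lambda}_{\boldsymbol{\tau}}^{-}(t)\mathscr{D}_{\boldsymbol{\tau}}(t)\,d_{\boldsymbol{\tau}}(t;V)$ on the left, and $\tfrac{1}{2}\widetilde{\lambda}_{\boldsymbol{\tau}}^{-}(t)\mathscr{D}_{\boldsymbol{\tau}}^{2}(t)$ on the right. (One must be slightly careful that $1-l_{\boldsymbol{\tau}}(t)\le 1$ so that the square of the surplus indeed collapses to $\mathscr{D}_{\boldsymbol{\tau}}^{2}$ rather than merely being bounded by it; the remaining factor $(1-l_{\boldsymbol{\tau}}(t))$ not captured is nonnegative and can be dropped on the left or kept on the right.)

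Substituting this comparison into \eqref{eq:desig-var-interp} and moving everything appropriately yields exactly \eqref{eq:desig-var-interp1}, valid at every $t\in(t_{\boldsymbol{\tau}}^{n-1},t_{\boldsymbol{\tau}}^{n}]$ for each $n$ with $t_{\boldsymbol{\tau}}^{n}\le T$ — hence for almost every $t\in[0,T]$ (the time derivative exists a.e. because $d_{\boldsymbol{\tau}}^{2}(\cdot;V)$ is piecewise affine), for every $V\in\mathbf{D}$. The main obstacle I anticipate is purely bookkeeping: tracking the correct sign in the replacement of $d^{2}(\overline{U}_{\boldsymbol{\tau}},V)$ by $d_{\boldsymbol{\tau}}^{2}(t;V)$ depending on the sign of $\widetilde{\lambda}_{\boldsymbol{\tau}}(t)$, and verifying that each surplus produced by the triangle inequality lands exactly on one of the prescribed residual terms $\tfrac{1}{2}\mathscr{R}_{\boldsymbol{\tau}}$, $\tfrac{1}{2}\widetilde{\lambda}_{\boldsymbol{\tau}}^{-}(t)\mathscr{D}_{\boldsymbol{\tau}}^{2}$, $(1-l_{\boldsymbol{\tau}})(\mathcal{E}(t_{\boldsymbol{\tau}}^{n-1},\underline{U}_{\boldsymbol{\tau}})-\mathcal{E}(t_{\boldsymbol{\tau}}^{n},\underline{U}_{\boldsymbol{\tau}}))$ — rather than leaving a stray term with an uncontrolled sign. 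No new analytic input beyond \textbf{E5} (already used to get \eqref{eq:desig-var-interp}) and the triangle inequality is needed.
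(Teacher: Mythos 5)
Your starting point (take \eqref{eq:desig-var-interp} and trade $d^{2}(\overline{U}_{\boldsymbol{\tau}}(t),V)$ for $d_{\boldsymbol{\tau}}^{2}(t;V)$) is exactly the paper's, but the symmetric, absolute-value triangle-inequality bound you propose, namely $\bigl|d^{2}(\underline{U}_{\boldsymbol{\tau}},V)-d^{2}(\overline{U}_{\boldsymbol{\tau}},V)\bigr|\le 2\,d(\overline{U}_{\boldsymbol{\tau}},\underline{U}_{\boldsymbol{\tau}})\,d_{\boldsymbol{\tau}}(t;V)+C\,d^{2}(\overline{U}_{\boldsymbol{\tau}},\underline{U}_{\boldsymbol{\tau}})$, is too lossy: the surpluses it creates do not land on the residuals that \eqref{eq:desig-var-interp1} allows. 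In the case $\widetilde{\lambda}_{\boldsymbol{\tau}}(t)>0$ your estimate leaves, after multiplying by $\widetilde{\lambda}_{\boldsymbol{\tau}}^{+}/2$, a stray nonnegative term of the form $\widetilde{\lambda}_{\boldsymbol{\tau}}^{+}(t)(1-l_{\boldsymbol{\tau}}(t))\,d^{2}(\overline{U}_{\boldsymbol{\tau}},\underline{U}_{\boldsymbol{\tau}})$ on the right-hand side; the only square-type residual permitted in \eqref{eq:desig-var-interp1} carries the coefficient $\widetilde{\lambda}_{\boldsymbol{\tau}}^{-}$, which vanishes in this case, and $\tfrac12\mathscr{R}_{\boldsymbol{\tau}}$ is a fixed expression already consumed by \eqref{eq:desig-var-interp}, so there is nothing to absorb it into. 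In the case $\widetilde{\lambda}_{\boldsymbol{\tau}}(t)<0$ your surplus is $(1-l_{\boldsymbol{\tau}})d^{2}(\overline{U}_{\boldsymbol{\tau}},\underline{U}_{\boldsymbol{\tau}})=\mathscr{D}_{\boldsymbol{\tau}}(t)\,d(\overline{U}_{\boldsymbol{\tau}},\underline{U}_{\boldsymbol{\tau}})\ge\mathscr{D}_{\boldsymbol{\tau}}^{2}(t)$, i.e.\ it is in general strictly larger than the allowed $\mathscr{D}_{\boldsymbol{\tau}}^{2}$ (your parenthetical hope that the surplus ``collapses to $\mathscr{D}_{\boldsymbol{\tau}}^{2}$'' goes in the wrong direction, e.g.\ when $l_{\boldsymbol{\tau}}(t)$ is close to $1$). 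Also the claim that $d(\overline{U}_{\boldsymbol{\tau}},V)\le d_{\boldsymbol{\tau}}(t;V)$ follows from concavity of the square root is false in general, as you yourself note; the max-based patch is precisely what produces the unabsorbable surplus. So, as written, your argument proves a weaker inequality with larger residuals, not \eqref{eq:desig-var-interp1}.

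The paper's proof is sign-dependent and sharper. For $\widetilde{\lambda}_{\boldsymbol{\tau}}(t)<0$ (the case quoted from Ambrosio--Gigli--Savar\'e, p.~88) one uses $d_{\boldsymbol{\tau}}(t;V)\ge(1-l_{\boldsymbol{\tau}})d(\underline{U}_{\boldsymbol{\tau}},V)+l_{\boldsymbol{\tau}}d(\overline{U}_{\boldsymbol{\tau}},V)$ (Jensen) and the triangle inequality to get $d(\overline{U}_{\boldsymbol{\tau}},V)\le d_{\boldsymbol{\tau}}(t;V)+\mathscr{D}_{\boldsymbol{\tau}}(t)$; squaring gives exactly the cross term $2\mathscr{D}_{\boldsymbol{\tau}}d_{\boldsymbol{\tau}}$ and exactly $\mathscr{D}_{\boldsymbol{\tau}}^{2}$, with the factor $(1-l_{\boldsymbol{\tau}})$ correctly attached to the square. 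For $\widetilde{\lambda}_{\boldsymbol{\tau}}(t)>0$ one may assume $d(U_{\boldsymbol{\tau}}^{n},V)<d(U_{\boldsymbol{\tau}}^{n-1},V)$ (otherwise there is nothing to prove) and a finer chain of estimates yields $d^{2}(\overline{U}_{\boldsymbol{\tau}}(t),V)-d_{\boldsymbol{\tau}}^{2}(t;V)\ge-2\,d(\overline{U}_{\boldsymbol{\tau}},\underline{U}_{\boldsymbol{\tau}})\,d_{\boldsymbol{\tau}}(t;V)$ with no additive square term at all, which is what makes the right-hand side of \eqref{eq:desig-var-interp1} free of any $\widetilde{\lambda}^{+}$-residual. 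To repair your proposal you would have to reproduce these two sign-specific estimates; the crude two-sided bound cannot be massaged into the stated inequality (though the weaker inequality it yields could probably be pushed through the later convergence estimates, that would amount to changing the statement of the lemma).
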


\begin{proof}
A detailed proof for the case $\tilde{\lambda}_{\boldsymbol{\tau}}(t)<0$ can be found in \cite[pg.88]{Ambrosio}. Let us explicit the proof for $\tilde{\lambda}_{\boldsymbol{\tau}}(t)>0$. For that, we can suppose that
$$d(U^n_{\boldsymbol{\tau}},V)<d(U^{n-1}_{\boldsymbol{\tau}},V),$$
and estimate
$$d^2(\overline{U}_{\boldsymbol{\tau}}(t),V)-d^2_{\boldsymbol{\tau}}(t;V)=(1-l_{\boldsymbol{\tau}})
\left(d^2(\overline{U}_{\boldsymbol{\tau}}(t),V)-d^2(\underline{U}_{\boldsymbol{\tau}}(t),V)\right).$$
Thus, we have
\begin{eqnarray*}
d^2(\overline{U}_{\boldsymbol{\tau}}(t),V)-d^2_{\boldsymbol{\tau}}(t;V)&\geq&-d(U^n_{\boldsymbol{\tau}},
U^{n-1}_{\boldsymbol{\tau}})(d(U^n_{\boldsymbol{\tau}},V)+d(U^{n-1}_{\boldsymbol{\tau}},V))\\
& &+l_{\boldsymbol{\tau}}d(U^n_{\boldsymbol{\tau}},U^{n-1}_{\boldsymbol{\tau}})(d(U^{n-1}_{\boldsymbol{\tau}},V)
-d(U^{n}_{\boldsymbol{\tau}},V))\\
&=&-d(U^n_{\boldsymbol{\tau}},U^{n-1}_{\boldsymbol{\tau}})\left((1-l_{\boldsymbol{\tau}})d(U^{n-1}_{\boldsymbol{\tau}}
,V)\right.\\
& &\left.+l_{\boldsymbol{\tau}}d(U^{n}_{\boldsymbol{\tau}},V)+d(U^{n}_{\boldsymbol{\tau}},V)\right)\\
&\geq&-2d(U^n_{\boldsymbol{\tau}},U^{n-1}_{\boldsymbol{\tau}})\left((1-l_{\boldsymbol{\tau}})d(U^{n-1}_{\boldsymbol{\tau}}
,V)+l_{\boldsymbol{\tau}}d(U^{n}_{\boldsymbol{\tau}},V)\right)\\
&\geq&-2d(U^n_{\boldsymbol{\tau}},U^{n-1}_{\boldsymbol{\tau}})d_{\boldsymbol{\tau}}(t;V),
\end{eqnarray*}
which together with \eqref{eq:desig-var-interp} gives the desired result.
\end{proof}The next result is a slightly modified version of the Gronwall
Lemma in \cite[Lemma 4.1.8]{Ambrosio}. The proof is the same and we omit it.

\begin{lem}
\label{lem:Gronwall-lemma} Let $x:[0,\infty)\rightarrow\mathbb{R}$ be a
locally absolutely continuous function and let $a,b,\tilde{\lambda}\in L_{loc}%
^{1}([0,\infty))$ be such that
\begin{equation}
\frac{d}{dt}x^{2}(t)+2\widetilde{\lambda}(t)x^{2}(t)\leq a(t)+2b(t)x(t)\text{
a.e. }t\geq0. \label{eq:gronwall-version}%
\end{equation}
For $T>0,$ we have that
\[
e^{\alpha(T)}|x(T)|\leq\sqrt{\left(  x^{2}(0)+\sup_{t\in\lbrack0,T]}\int
_{0}^{t}{e^{2\alpha(s)}a(s)\ ds}\right)  ^{+}}+2\int_{0}^{T}{e^{\alpha
(t)}|b(t)|\ dt},
\]
where $\alpha(t)=\int_{0}^{t}{\widetilde{\lambda}(s)\ ds}$.
\end{lem}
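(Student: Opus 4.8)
The plan is to reduce the differential inequality \eqref{eq:gronwall-version} to a clean first-order linear form by multiplying through by the integrating factor $e^{2\alpha(t)}$, where $\alpha(t)=\int_0^t\widetilde{\lambda}(s)\,ds$. Indeed, since $x$ is locally absolutely continuous and $\widetilde{\lambda}\in L^1_{loc}$, the product $e^{2\alpha(t)}x^2(t)$ is locally absolutely continuous, and almost everywhere
\[
\frac{d}{dt}\left(e^{2\alpha(t)}x^2(t)\right)=e^{2\alpha(t)}\left(\frac{d}{dt}x^2(t)+2\widetilde{\lambda}(t)x^2(t)\right)\leq e^{2\alpha(t)}a(t)+2e^{2\alpha(t)}b(t)x(t).
\]
Integrating from $0$ to $t$ and writing $y(t):=e^{\alpha(t)}|x(t)|$, so that $e^{2\alpha(t)}x^2(t)=y^2(t)$ and $e^{2\alpha(t)}|b(t)x(t)|=e^{\alpha(t)}|b(t)|\,y(t)$, I obtain
\[
y^2(t)\leq x^2(0)+\int_0^t e^{2\alpha(s)}a(s)\,ds+2\int_0^t e^{\alpha(s)}|b(s)|\,y(s)\,ds.
\]

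Next I would bound the right-hand side uniformly on $[0,T]$: set $G:=x^2(0)+\sup_{t\in[0,T]}\int_0^t e^{2\alpha(s)}a(s)\,ds$ (note $a$ need not be signed, so this sup is the natural quantity, and since $y^2\geq 0$ the bound is only useful through its positive part $G^+$). Then for every $t\in[0,T]$,
\[
y^2(t)\leq G^+ + 2\int_0^t e^{\alpha(s)}|b(s)|\,y(s)\,ds.
\]
Now I invoke the standard Bihari–LaSalle / quadratic Gronwall argument: if a nonnegative function $y$ satisfies $y^2(t)\leq c^2+2\int_0^t h(s)y(s)\,ds$ with $c\geq 0$ and $h\geq 0$ integrable, then $y(t)\leq c+\int_0^t h(s)\,ds$. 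This is proved by applying the elementary inequality to $z_\varepsilon(t):=\sqrt{c^2+\varepsilon+2\int_0^t h(s)y(s)\,ds}\geq y(t)>0$: one has $z_\varepsilon$ absolutely continuous with $z_\varepsilon'=h y/z_\varepsilon\leq h$ a.e., hence $z_\varepsilon(t)\leq z_\varepsilon(0)+\int_0^t h=\sqrt{c^2+\varepsilon}+\int_0^t h$, and then $\varepsilon\to 0^+$. Applying this with $c=\sqrt{G^+}$ and $h(s)=e^{\alpha(s)}|b(s)|$ yields
\[
e^{\alpha(t)}|x(t)|=y(t)\leq\sqrt{G^+}+\int_0^t e^{\alpha(s)}|b(s)|\,ds,
\]
and specializing to $t=T$ and bounding $\int_0^T e^{\alpha(s)}|b(s)|\,ds$ trivially gives exactly the claimed estimate (the factor $2$ in front of the last integral in the statement is harmless slack).

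The only genuinely delicate point is the justification that the chain rule and the fundamental theorem of calculus apply to $e^{2\alpha(t)}x^2(t)$ — i.e., that it is locally absolutely continuous — which follows because $t\mapsto e^{2\alpha(t)}$ is locally Lipschitz composed with an $L^1_{loc}$-integral hence locally absolutely continuous and locally bounded, $x^2$ is locally absolutely continuous (as $x$ is and is locally bounded), and products of locally bounded locally absolutely continuous functions are locally absolutely continuous. Everything else is the routine integrating-factor computation and the classical quadratic Gronwall lemma; since the statement says ``the proof is the same'' as \cite[Lemma 4.1.8]{Ambrosio}, I would simply point to that reference after indicating these two ingredients. \fin
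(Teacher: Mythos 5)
Your argument is correct, and the reduction is the same one behind the proof the paper omits (it cites \cite[Lemma 4.1.8]{Ambrosio} and says the proof is unchanged): multiply by the integrating factor $e^{2\alpha(t)}$, justify local absolute continuity of $e^{2\alpha}x^{2}$ so that the chain rule and the fundamental theorem of calculus apply a.e., and integrate to get $y^{2}(t)\leq G^{+}+2\int_{0}^{t}e^{\alpha(s)}|b(s)|\,y(s)\,ds$ with $y=e^{\alpha}|x|$. The only point where you genuinely diverge is the last step: Ambrosio--Gigli--Savar\'e close the argument algebraically, taking $Y(t)=\sup_{[0,t]}y$, bounding $Y^{2}\leq G^{+}+2Y\int_{0}^{T}e^{\alpha}|b|$ and solving this quadratic inequality, which is where the factor $2$ in front of the last integral comes from; you instead run a quadratic (Ou--Iang/Bihari-type) differential comparison with $z_{\varepsilon}(t)=\sqrt{G^{+}+\varepsilon+2\int_{0}^{t}e^{\alpha}|b|\,y}$, $z_{\varepsilon}'\leq e^{\alpha}|b|$ a.e. Your route needs the extra (but correctly justified) absolute-continuity check for $z_{\varepsilon}$ and yields the sharper constant $1$, which trivially implies the stated bound with the factor $2$; the sup-and-solve route is slightly more elementary, requiring no further differentiation. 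Either way the lemma as stated follows.
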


\subsection{\bigskip More two interpolation terms}

\hspace{0.5cm}In this subsection we consider two interpolation functions that
depend on two partitions $\boldsymbol{\tau}$ and $\boldsymbol{\eta}$ of
$[0,\infty)$ with $|\boldsymbol{\tau}|,|\boldsymbol{\eta}|<\tau^{\ast}$. So
far, we have define two residual terms $\mathscr{R}_{\boldsymbol{\tau}}$ and
$\mathscr{D}_{\boldsymbol{\tau}}$ in (\ref{eq-residuo-R}) and
(\ref{eq:residuo-D}), respectively. Another one that we will work with is

\begin{align}
G_{\boldsymbol{\tau\eta}}(t)  &  :=2(1-l_{\boldsymbol{\tau}}(t))\left[
\mathcal{E}(\mathcal{T}_{\boldsymbol{\eta}}(t),\underline{U}%
_{\boldsymbol{\tau}}(t))-\mathcal{E}(\mathcal{T}_{\boldsymbol{\tau}}%
(t),\underline{U}_{\boldsymbol {\tau}}(t))\right] \nonumber\\
&  \ +2l_{\boldsymbol{\tau}}(t)\left[  \mathcal{E}(\mathcal{T}%
_{\boldsymbol{\eta}}(t),\overline{U}_{\boldsymbol{\tau}}(t))-\mathcal{E}%
(\mathcal{T}_{\boldsymbol{\tau}}(t),\overline{U}_{\boldsymbol{\tau}}(t))\right], \text{ \ for \ } t\in\lbrack0,T]. \label{eq:residuo-G}
\end{align}
Define also the interpolation function
$d_{\boldsymbol{\tau}\boldsymbol{\eta}}^{2}(t,s)$ as
\[
d_{\boldsymbol{\tau}\boldsymbol{\eta}}^{2}(t,s)=(1-l_{\boldsymbol{\eta}}%
(s))d_{\boldsymbol{\tau}}^{2}(t,\underline{U}_{\boldsymbol{\eta}}%
(s))+l_{\boldsymbol{\eta}}(s)d_{\boldsymbol{\tau}}^{2}(t,\overline
{U}_{\boldsymbol{\eta}}(s)).
\]
Taking in (\ref{eq:desig-var-interp1}) a convex combination, with coefficients
$(1-l_{\boldsymbol{\eta}}(t))$ and $l_{\boldsymbol{\eta}}(t)$ for
$V=\underline{U}_{\boldsymbol{\eta}}(t)$ and $V=\overline{U}%
_{\boldsymbol{\eta}}(t)$ respectively, we arrive at
\begin{align*}
\frac{d}{dt}d_{\boldsymbol{\tau\eta}}^{2}(t,t)+(\widetilde{\lambda
}_{\boldsymbol{\tau}}+\widetilde{\lambda}_{\boldsymbol {\eta}}%
)d_{\boldsymbol{\tau\eta}}^{2}(t,t)  &  \leq2\left[  \widetilde{\lambda
}_{\boldsymbol{\tau}}^{+}d(\overline{U}_{\boldsymbol{\tau}}(t),\underline
{U}_{\boldsymbol{\tau}}(t))+\widetilde{\lambda}_{\boldsymbol{\eta}}%
^{+}d(\overline{U}_{\boldsymbol{\eta}}(t),\underline{U}_{\boldsymbol{\eta}}%
(t))\right. \\
&  \left.  +\widetilde{\lambda}_{\boldsymbol{\tau}}^{-}%
(t)\mathscr{D}_{\boldsymbol{\tau}}(t)+\widetilde{\lambda}_{\boldsymbol{\eta}}%
^{-}(t)\mathscr{D}_{\boldsymbol{\eta}}(t)\right]  d_{\boldsymbol{\tau\eta}}%
(t,t)+\mathscr{R}_{\boldsymbol{\tau}}(t)\\
&  +\mathscr{R}_{\boldsymbol{\eta}}(t)+\widetilde{\lambda}_{\boldsymbol{\tau}}%
^{-}(t)\mathscr{D}_{\boldsymbol{\tau}}^{2}(t)+\widetilde{\lambda
}_{\boldsymbol{\eta}}^{-}(t)\mathscr{D}_{\boldsymbol{\eta}}^{2}%
(t)+G_{\boldsymbol{\tau\eta}}(t)\\
&  +G_{\boldsymbol{\eta\tau}}(t).
\end{align*}
Now we can use Lemma \ref{lem:Gronwall-lemma} in the last inequality in order
to estimate
\begin{align}
e^{\alpha_{\boldsymbol{\tau\eta}}(t)}d_{\boldsymbol{\tau\eta}}(t,t)  &
\leq\left(  d^{2}(U_{\boldsymbol{\tau}}^{0},U_{\boldsymbol{\eta}}^{0}%
)+\int_{0}^{t}{e^{2\alpha_{\boldsymbol{\tau\eta}}(s)}\sum
_{\boldsymbol{\theta}\in\{\boldsymbol {\tau,\eta}\}}{\left(
\mathscr{R}_{\boldsymbol{\theta}}^{+}(s)+\widetilde{\lambda}%
_{\boldsymbol{\theta}}^{-}(s)\mathscr{D}_{\boldsymbol{\theta}}^{2}(s)\right)
\ ds}}\right. \nonumber\\
&  \left.  +\int_{0}^{t}{e^{2\alpha_{\boldsymbol{\tau\eta}}(s)}\left(
G_{\boldsymbol{\eta\tau}}^{+}(s)+G_{\boldsymbol {\tau\eta}}^{+}(s)\right)
\ ds}\right)  ^{1/2}\nonumber\\
&  \ +\int_{0}^{t}{e^{\alpha_{\boldsymbol{\tau\eta}}(s)}\left(  \widetilde
{\lambda}_{\boldsymbol{\tau}}^{+}(s)d(\overline{U}_{\boldsymbol{\tau}}%
(s),\underline{U}_{\boldsymbol{\tau}}(s))+\widetilde{\lambda}%
_{\boldsymbol{\eta}}^{+}(s)d(\overline{U}_{\boldsymbol{\eta}}(s),\underline
{U}_{\boldsymbol{\eta}}(s))\right.  }\nonumber\\
&  \left.  +\widetilde{\lambda}_{\boldsymbol{\tau}}^{-}%
(s)\mathscr{D}_{\boldsymbol{\tau}}(s)+\widetilde{\lambda}_{\boldsymbol{\eta}}%
^{-}(s)\mathscr{D}_{\boldsymbol{\eta}}(s)\right)  \ ds,
\label{eq:estim-before-residuo}%
\end{align}
for all $t\geq0$, where $\alpha_{\boldsymbol{\tau\eta}}(t):=\int_{0}%
^{t}{\widetilde{\lambda}_{\boldsymbol{\tau}}(s)+\widetilde{\lambda
}_{\boldsymbol{\eta}}(s)\ ds}$.

\subsection{Convergence of the approximate solutions}

\label{subsect:convergence}\hspace{0.5cm}In this section, we deal with the
convergence of the approximate solutions $\overline{U}_{\boldsymbol{\tau}}$
and $\underline{U}_{\boldsymbol{\tau}}$. Using the minimizer property of
$U_{\boldsymbol{\tau}}^{j}$ and direct calculations, one can obtain
\begin{equation}
\int_{0}^{t}{e^{2\alpha_{\boldsymbol{\tau\eta}}(s)}%
(\mathscr{R}_{\boldsymbol{\tau}}^{+}(s)+\widetilde{\lambda}%
_{\boldsymbol{\tau}}^{-}(s)\mathscr{D}_{\boldsymbol{\tau}}^{2}(s))\ ds}\leq
C|\boldsymbol{\tau}|\sum_{j=1}^{n}(\mathcal{E}(t_{\boldsymbol{\tau}}%
^{j},U_{\boldsymbol{\tau}}^{j-1})-\mathcal{E}(t_{\boldsymbol{\tau}}%
^{j},U_{\boldsymbol{\tau}}^{j})) \label{eq:estimativa-residuos-R-D}%
\end{equation}%
\begin{equation}
\left(  \int_{0}^{t}{e^{\alpha_{\boldsymbol{\tau\eta}}(s)}\widetilde{\lambda
}_{\boldsymbol{\tau}}^{-}(s)\mathscr{D}_{\boldsymbol{\tau}}(s)\ ds}\right)
^{2}\leq C|\boldsymbol{\tau}|^{2}\sum_{j=1}^{n}(\mathcal{E}%
(t_{\boldsymbol{\tau}}^{j},U_{\boldsymbol{\tau}}^{j-1})-\mathcal{E}%
(t_{\boldsymbol{\tau}}^{j},U_{\boldsymbol{\tau}}^{j})) \label{eq:estimativa-D}%
\end{equation}%
\begin{equation}
\left(  \int_{0}^{t}{e^{\alpha_{\boldsymbol{\tau\eta}}(s)}\widetilde{\lambda
}_{\boldsymbol{\tau}}^{+}(s)d(\overline{U}_{\boldsymbol{\tau}}(s),\underline
{U}_{\boldsymbol{\tau}}(s))\ ds}\right)  ^{2}\leq C(T)|\boldsymbol{\tau}|^{2}%
\sum_{j=1}^{n}(\mathcal{E}(t_{\boldsymbol{\tau}}^{j},U_{\boldsymbol{\tau}}%
^{j-1})-\mathcal{E}(t_{\boldsymbol{\tau}}^{j},U_{\boldsymbol{\tau}}^{j})),
\label{eq:estimativa-d}%
\end{equation}
for $T\in(t_{\boldsymbol{\tau}}^{N-1},t_{\boldsymbol{\tau}}^{N}],$ $0\leq
t\leq T$, and $n\leq N.$

The above estimates give some control on the residual terms
$\mathscr{R}_{\boldsymbol{\tau}}$ and $\mathscr{D}_{\boldsymbol{\tau}}$. Next,
we provide an explicit estimate for the residual term (\ref{eq:residuo-G}).
This could be useful to obtain convergence rates of approximate solutions to
the gradient flow solutions. Recall the standard notations $a\wedge
b=min\{a,b\}$ and $a\vee b=max\{a,b\}$.

\begin{prop}
\label{prop:residuo-G} Assume \textbf{E1} to \textbf{E5} and the boundedness
condition (\ref{eq:limit-sec-discret}). Let
$\boldsymbol{\tau},\boldsymbol{\eta}$ be two partitions of $[0,+\infty)$ with
$\vert\boldsymbol{\tau}\vert$, $\vert\boldsymbol{\eta}\vert$ small enough as
in Lemma \ref{lemma:limit-sec-discret}. For $T>0,$ choose $N,K\in\mathbb{N}$,
such that $T\in(t_{\boldsymbol{\tau}}^{N-1},t_{\boldsymbol{\tau}}^{N}%
]\cap(t_{\boldsymbol{\tau}}^{K-1},t_{\boldsymbol{\tau}}^{K}]$. There is a
constant $C=C(T,S,\tau^{\ast},\mathcal{E})>0$ such that
\begin{equation}
\int_{0}^{T}{G_{\boldsymbol{\tau\eta}}^{+}(t)\ dt}\leq
C(|\boldsymbol{\tau}|+|\boldsymbol{\eta}|). \label{eq:proposition-est-G}%
\end{equation}

\end{prop}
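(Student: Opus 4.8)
The idea is to estimate $G_{\boldsymbol{\tau\eta}}^{+}(t)$ pointwise using the time-regularity hypothesis \textbf{E3}, and then integrate, converting the resulting time-integrals of $\beta$ over short intervals into factors of order $|\boldsymbol{\tau}|+|\boldsymbol{\eta}|$ via absolute continuity of $r\mapsto\int_0^r\beta$. First I would fix $t\in(t_{\boldsymbol{\tau}}^{n-1},t_{\boldsymbol{\tau}}^{n}]\cap(t_{\boldsymbol{\eta}}^{k-1},t_{\boldsymbol{\eta}}^{k}]$, so that $\mathcal{T}_{\boldsymbol{\tau}}(t)=t_{\boldsymbol{\tau}}^{n}$ and $\mathcal{T}_{\boldsymbol{\eta}}(t)=t_{\boldsymbol{\eta}}^{k}$, and observe that both these discrete times lie within distance $|\boldsymbol{\tau}|+|\boldsymbol{\eta}|$ of $t$, hence within $|\boldsymbol{\tau}|+|\boldsymbol{\eta}|$ of each other. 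Applying \textbf{E3} to each of the two brackets in \eqref{eq:residuo-G} with the fixed point $\underline{U}_{\boldsymbol{\tau}}(t)=U_{\boldsymbol{\tau}}^{n-1}$ (resp. $\overline{U}_{\boldsymbol{\tau}}(t)=U_{\boldsymbol{\tau}}^{n}$) gives
\begin{equation*}
G_{\boldsymbol{\tau\eta}}^{+}(t)\leq 2\Big|\int_{\mathcal{T}_{\boldsymbol{\tau}}(t)\wedge\mathcal{T}_{\boldsymbol{\eta}}(t)}^{\mathcal{T}_{\boldsymbol{\tau}}(t)\vee\mathcal{T}_{\boldsymbol{\eta}}(t)}\beta(r)\,dr\Big|\Big[(1-l_{\boldsymbol{\tau}}(t))(1+d^{2}(U_{\boldsymbol{\tau}}^{n-1},u^{\ast}))+l_{\boldsymbol{\tau}}(t)(1+d^{2}(U_{\boldsymbol{\tau}}^{n},u^{\ast}))\Big].
\end{equation*}
By Lemma \ref{lemma:limit-sec-discret} (using the boundedness assumption \eqref{eq:limit-sec-discret}), the quantities $d^{2}(u^{\ast},U_{\boldsymbol{\tau}}^{j})$ are bounded by a constant $C=C(S,T,\tau^{\ast},\mathcal{E})$ for all $j\leq N$, so the bracket is $\leq 1+C$ uniformly in $t\in[0,T]$; thus $G_{\boldsymbol{\tau\eta}}^{+}(t)\leq 2(1+C)\,|\int_{\mathcal{T}_{\boldsymbol{\tau}}(t)\wedge\mathcal{T}_{\boldsymbol{\eta}}(t)}^{\mathcal{T}_{\boldsymbol{\tau}}(t)\vee\mathcal{T}_{\boldsymbol{\eta}}(t)}\beta(r)\,dr|$.

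Next I would integrate this bound in $t$ over $[0,T]$. On each subinterval $(t_{\boldsymbol{\tau}}^{n-1},t_{\boldsymbol{\tau}}^{n}]\cap(t_{\boldsymbol{\eta}}^{k-1},t_{\boldsymbol{\eta}}^{k}]$ the endpoints of the inner integral are constant, and since $|\mathcal{T}_{\boldsymbol{\tau}}(t)-t|\leq|\boldsymbol{\tau}|$ and $|\mathcal{T}_{\boldsymbol{\eta}}(t)-t|\leq|\boldsymbol{\eta}|$ for $t\in[0,T]$, we have $[\mathcal{T}_{\boldsymbol{\tau}}(t)\wedge\mathcal{T}_{\boldsymbol{\eta}}(t),\,\mathcal{T}_{\boldsymbol{\tau}}(t)\vee\mathcal{T}_{\boldsymbol{\eta}}(t)]\subset[t-(|\boldsymbol{\tau}|\vee|\boldsymbol{\eta}|),\,t+(|\boldsymbol{\tau}|\vee|\boldsymbol{\eta}|)]$. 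Writing $B(r):=\int_0^r\beta$, which is (locally) absolutely continuous, the pointwise bound reads $G_{\boldsymbol{\tau\eta}}^{+}(t)\leq 2(1+C)\,|B(\mathcal{T}_{\boldsymbol{\tau}}(t))-B(\mathcal{T}_{\boldsymbol{\eta}}(t))|$, and I would then use
\begin{equation*}
\int_0^T |B(\mathcal{T}_{\boldsymbol{\tau}}(t))-B(\mathcal{T}_{\boldsymbol{\eta}}(t))|\,dt\leq \int_0^T |B(\mathcal{T}_{\boldsymbol{\tau}}(t))-B(t)|\,dt+\int_0^T |B(t)-B(\mathcal{T}_{\boldsymbol{\eta}}(t))|\,dt,
\end{equation*}
bounding each term by the modulus of continuity of $B$: the first integral is $\sum_{n}\int_{t_{\boldsymbol{\tau}}^{n-1}}^{t_{\boldsymbol{\tau}}^{n}\wedge T}(B(t_{\boldsymbol{\tau}}^{n})-B(t))\,dt\leq\sum_n \tau_n\big(B(t_{\boldsymbol{\tau}}^{n})-B(t_{\boldsymbol{\tau}}^{n-1})\big)\leq|\boldsymbol{\tau}|\,B(T+\tau^{\ast})$, using monotonicity of $B$ (as $\beta\geq0$), and similarly for the $\boldsymbol{\eta}$-term. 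This yields $\int_0^T G_{\boldsymbol{\tau\eta}}^{+}(t)\,dt\leq 2(1+C)\,B(T+\tau^{\ast})\,(|\boldsymbol{\tau}|+|\boldsymbol{\eta}|)$, which is the claimed estimate with $C(T,S,\tau^{\ast},\mathcal{E}):=2(1+C)\,\int_0^{T+\tau^{\ast}}\beta(r)\,dr$.

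The only genuinely delicate point is making sure the a priori bound on $d^{2}(u^{\ast},U_{\boldsymbol{\tau}}^{j})$ from Lemma \ref{lemma:limit-sec-discret} applies uniformly for all indices $j$ whose discrete times meet $[0,T]$ — this is where the hypothesis $|\boldsymbol{\tau}|<\tau^{\ast}<\frac{1}{\lambda_{T+1}^{-}}$ and the choice of $N$ (and analogously $K$ for $\boldsymbol{\eta}$) enter, and why the constant $C$ is allowed to depend on $T$, $S$, $\tau^{\ast}$ and $\mathcal{E}$; everything else is a routine telescoping/absolute-continuity computation. One should also note that $B$ need only be absolutely continuous on each compact interval, which is guaranteed by $\beta\in L^1_{loc}([0,\infty))$, and that the argument is symmetric in $\boldsymbol{\tau}$ and $\boldsymbol{\eta}$, so the analogous estimate for $G_{\boldsymbol{\eta\tau}}^{+}$ (which appears alongside $G_{\boldsymbol{\tau\eta}}^{+}$ in \eqref{eq:estim-before-residuo}) follows by exchanging the roles of the two partitions.
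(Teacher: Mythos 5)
Your proposal is correct, and its first half coincides with the paper's: you apply \textbf{E3} at the points $\underline{U}_{\boldsymbol{\tau}}(t)$, $\overline{U}_{\boldsymbol{\tau}}(t)$ and use the a priori bound $d^{2}(u^{\ast},U_{\boldsymbol{\tau}}^{j})\leq C(S,T,\tau^{\ast},\mathcal{E})$ from Lemma \ref{lemma:limit-sec-discret} to reduce everything to the estimate
$\int_{0}^{T}\int_{\mathcal{T}_{\boldsymbol{\tau}}(t)\wedge\mathcal{T}_{\boldsymbol{\eta}}(t)}^{\mathcal{T}_{\boldsymbol{\tau}}(t)\vee\mathcal{T}_{\boldsymbol{\eta}}(t)}\beta(s)\,ds\,dt\leq C(|\boldsymbol{\tau}|+|\boldsymbol{\eta}|)$, exactly as in \eqref{aux-est-3}. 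Where you diverge is in proving that estimate (the paper's \eqref{eq:proposicao-est-G3}): the paper runs an inductive block decomposition of $[0,T]$ into sets $J^{m}$ obtained by interleaving the two partitions, so that on each block the inner $\beta$-integral ranges over a piece $[t_{\boldsymbol{\eta}}^{k_{m-1}},t_{\boldsymbol{\eta}}^{k_m}]$ and the blocks have length $\lesssim|\boldsymbol{\tau}|+|\boldsymbol{\eta}|$, after which the disjointness of the $\beta$-ranges makes the sum telescope. You instead insert $B(t)=\int_{0}^{t}\beta$ between $B(\mathcal{T}_{\boldsymbol{\tau}}(t))$ and $B(\mathcal{T}_{\boldsymbol{\eta}}(t))$, and handle each partition separately by monotonicity of $B$ (here $\beta\geq0$ is essential, and is indeed part of \textbf{E3}) together with a telescoping sum $\sum_{n}\tau_{n}\bigl(B(t_{\boldsymbol{\tau}}^{n})-B(t_{\boldsymbol{\tau}}^{n-1})\bigr)\leq|\boldsymbol{\tau}|\,B(T+\tau^{\ast})$. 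This is a genuinely simpler and symmetric route to the same combinatorial fact, avoids the bookkeeping with the indices $n_m,k_m$, and yields the same form of constant, $C$ proportional to $(1+\sup_{j\le N}d^{2}(u^{\ast},U_{\boldsymbol{\tau}}^{j}))\int_{0}^{T+\tau^{\ast}}\beta(r)\,dr$; the paper's block argument buys nothing extra here beyond being stated directly in terms of the double integral rather than the primitive $B$. Your closing remarks (that the relevant discrete times stay in $[0,T+\tau^{\ast}]$, that Lemma \ref{lemma:limit-sec-discret} covers all indices met by $[0,T]$, and that $G^{+}_{\boldsymbol{\eta\tau}}$ is handled by symmetry) are precisely the points that need saying, so I see no gap.
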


\begin{proof}
Denote $I_{\boldsymbol{\tau}}^{n}=(t_{\boldsymbol{\tau}}^{n-1},t_{\boldsymbol{\tau}}^{n}]$. For
$t_{\boldsymbol{\tau}}^{1}$, let $k_1$ be the greatest integer satisfying $t_{\boldsymbol{\eta}}^{k_1-1}<
t_{\boldsymbol{\tau}}^{1}$. If $t_{\boldsymbol{\eta}}^{k_1}=t_{\boldsymbol{\tau}}^{1}$ define $J_1=I_{\boldsymbol{\tau}}
^{1}$. Otherwise, choose $n_1\leq N$ as the greatest integer with the property $t_{\boldsymbol{\tau}}^{n_1}<t_{\boldsymbol
{\eta}}^{k_1}$ and define $J_1=I_{\boldsymbol{\tau}}^1\cup\cdots\cup I_{\boldsymbol{\tau}}^{n_1}$. In both cases, we have $\mathcal{T}_{\boldsymbol{\eta}}(t_{\boldsymbol{\tau}}^{n_1})=t_{\boldsymbol{\eta}}^{k_1}$ and then
\begin{eqnarray*}
\int_{J^1_{\boldsymbol{\tau}}}{(1-l_{\boldsymbol{\tau}}(t)){\int_{\mathcal{T}_{\boldsymbol{\tau}}(t)\wedge\mathcal{T
}_{\boldsymbol{\eta}}(t)}^{\mathcal{T}_{\boldsymbol{\tau}}(t)\vee\mathcal{T}_{\boldsymbol{\eta}}(t)}}{\beta(s)\ ds}
\ dt}&\leq&\int_{J^1_{\boldsymbol{\tau}}}{(1-l_{\boldsymbol{\tau}}(t))\ dt}\int_{0}^{\mathcal{T}_{\boldsymbol{\tau}}
(t^{n_1}_{\boldsymbol{\tau}})\vee\mathcal{T}_{\boldsymbol{\eta}}(t^{n_1}_{\boldsymbol{\tau}})}{\beta(s)\ ds}\\
&\leq&(\vert\boldsymbol{\tau}\vert+\vert\boldsymbol{\eta}\vert)\int_{0}^{t^{k_1}_{\boldsymbol{\eta}}}{\beta(s)\ ds}.
\end{eqnarray*}
If $t^{n_1+1}_{\boldsymbol{\tau}}=t^{k_1}_{\boldsymbol{\eta}}$, define $J^2_{\boldsymbol{\tau}}
=I^{n_1+1}_{\boldsymbol{\tau}}$. Otherwise, take the greatest integer $k_2\in\mathbb{N}$ such that $t^{k_2-1}_{\boldsymbol{\eta}}
< t^{n_1+1}_{\boldsymbol{\tau}}$. In the case $t^{k_2}_{\boldsymbol{\eta}}= t^{n_1+1}_{\boldsymbol{\tau}}$, define $J^2_{\boldsymbol{\tau}}=I^{n_1+1}_{\boldsymbol{\tau}}$. Otherwise, take the greatest integer $n_2\leq N$ such that $t^{n_2}_{\boldsymbol{\tau}}< t^{k_2}_{\boldsymbol{\eta}}$ and define $J^2_{\boldsymbol{\tau}}=
I^{n_1+1}_{\boldsymbol{\tau}}\cup\cdots\cup I^{n_2}_{\boldsymbol{\tau}}$. Noting that $\mathcal{T}_{\boldsymbol{\eta}}
(t^{n_2}_{\boldsymbol{\tau}})=t^{k_2}_{\boldsymbol{\eta}}$ and $\mathcal{T}_{\boldsymbol{\tau}}(t)\geq
t^{n_1+1}_{\boldsymbol{\tau}}$, we get
\begin{eqnarray*}
\int_{J^2_{\boldsymbol{\tau}}}{(1-l_{\boldsymbol{\tau}}(t)){\int_{\mathcal{T}_{\boldsymbol{\tau}}(t)\wedge\mathcal{T
}_{\boldsymbol{\eta}}(t)}^{\mathcal{T}_{\boldsymbol{\tau}}(t)\vee\mathcal{T}_{\boldsymbol{\eta}}(t)}}{\beta(s)\ ds}
\ dt}&\leq&\int_{J^2_{\boldsymbol{\tau}}}{(1-l_{\boldsymbol{\tau}}(t))\ dt}\int_{t^{n_1+1}_{\boldsymbol{\tau}}\wedge\mathcal{T}_{\boldsymbol{\eta}}(t^{n_1}_{\boldsymbol{\tau}})}^{\mathcal{T}_
{\boldsymbol{\tau}}(t^{n_2}_{\boldsymbol{\tau}})\vee\mathcal{T}_{\boldsymbol{\eta}}(t^{n_2}_{\boldsymbol{\tau}})}{
\beta(s)\ ds}\\
&\leq&(\vert\boldsymbol{\tau}\vert+\vert\boldsymbol{\eta}\vert)\int_{t^{k_1}_{\boldsymbol{\eta}}}^{t^{k_2}_
{\boldsymbol{\eta}}}{\beta(s)\ ds}.
\end{eqnarray*}
Proceeding inductively, and adding estimates obtained in the process, we arrive at
\begin{equation}
\label{eq:proposicao-est-G1}
\int_{0}^T{(1-l_{\boldsymbol{\tau}}(t)){\int_{\mathcal{T}_{\boldsymbol{\tau}}(t)\wedge\mathcal{T
}_{\boldsymbol{\eta}}(t)}^{\mathcal{T}_{\boldsymbol{\tau}}(t)\vee\mathcal{T}_{\boldsymbol{\eta}}(t)}}{\beta(s)\ ds}
\ dt}\leq(\vert\boldsymbol{\tau}\vert+\vert\boldsymbol{\eta}\vert)\int_{0}^{T+\tau^*}{\beta(s)\ ds}.
\end{equation}
Analogously,
\begin{equation}
\label{eq:proposicao-est-G2}
\int_{0}^T{l_{\boldsymbol{\tau}}(t){\int_{\mathcal{T}_{\boldsymbol{\tau}}(t)\wedge\mathcal{T
}_{\boldsymbol{\eta}}(t)}^{\mathcal{T}_{\boldsymbol{\tau}}(t)\vee\mathcal{T}_{\boldsymbol{\eta}}(t)}}{\beta(s)\ ds}
\ dt}\leq(\vert\boldsymbol{\tau}\vert+\vert\boldsymbol{\eta}\vert)\int_{0}^{T+\tau^*}{\beta(s)\ ds}.
\end{equation}
Adding (\ref{eq:proposicao-est-G1}) and (\ref{eq:proposicao-est-G2}), we get
\begin{equation}
\label{eq:proposicao-est-G3}
\int_{0}^T{{\int_{\mathcal{T}_{\boldsymbol{\tau}}(t)\wedge\mathcal{T}_{\boldsymbol{\eta}}(t)}^{\mathcal{T}_{\boldsymbol{\tau}}(t)\vee\mathcal{T}_{\boldsymbol{\eta}}(t)}}{\beta(s)\ ds}
\ dt}\leq 2(\vert\boldsymbol{\tau}\vert+\vert\boldsymbol{\eta}\vert)\int_{0}^{T+\tau^*}{\beta(s)\ ds}.
\end{equation}
Now, recalling (\ref{eq:residuo-D}) and the property \textbf{E5}, and using the first estimate in (\ref{eq:limit-sec-discret1}), for $t\in[0,T]$ it follows that
\begin{eqnarray}
G^+_{\boldsymbol{\tau}\boldsymbol{\eta}}(t)&\leq&2(1-l_{\boldsymbol{\tau}}(t))\int_{\mathcal{T}_{\boldsymbol{\tau}}
(t)\wedge\mathcal{T}_{\boldsymbol{\eta}}(t)}^{\mathcal{T}_{\boldsymbol{\tau}}(t)\vee\mathcal{T}_{\boldsymbol{\eta}}
(t)}{\beta(s)\ ds}(1+d^2(u^*,\overline{U}_{\boldsymbol{\tau}})(t))\nonumber\\
&\ &+2l_{\boldsymbol{\tau}}(t)\int_{\mathcal{T}_{\boldsymbol{\tau}}
(t)\wedge\mathcal{T}_{\boldsymbol{\eta}}(t)}^{\mathcal{T}_{\boldsymbol{\tau}}(t)\vee\mathcal{T}_{\boldsymbol{\eta}}
(t)}{\beta(s)\ ds}(1+d^2(u^*,\underline{U}_{\boldsymbol{\tau}})(t))\nonumber\\
&\leq&C(S,T,\tau^*,\mathcal{E})\int_{\mathcal{T}_{\boldsymbol{\tau}}(t)\wedge\mathcal{T}_{\boldsymbol{\eta}}(t)}^{\mathcal{T}_{\boldsymbol{\tau}}(t)\vee\mathcal{T}
_{\boldsymbol{\eta}}(t)}{\beta(s)\ ds}.\label{aux-est-3}
\end{eqnarray}
Finally, we conclude by integrating \eqref{aux-est-3} over $[0,T]$ and using (\ref{eq:proposicao-est-G3}).
\end{proof}

In the present section and in Section \ref{sect:construc-and-propert}, we have
obtained some properties and estimates for $\mathcal{E}(t,u)$ and the implicit
variational scheme (\ref{eq:minimization-problem}) associated to the problem
(\ref{eq:sist-grad-flow})-(\ref{eq:sist-grad-flow-initial-data}). After doing
that, we are in position for proceeding as in \cite[pag. 91-92]{Ambrosio} and showing that the approximate solutions (\ref{eq:sol-interpolante1}) converge uniformly in $[0,T]$ as $|\boldsymbol{\tau}|\rightarrow0.$

\begin{teor}
\label{teor:convergence-scheme-discret} Assume \textbf{E1} to \textbf{E5} and
the condition
\begin{equation}
\lim_{|\boldsymbol{\tau}|\rightarrow0}d(U_{\boldsymbol{\tau}}^{0}%
,u_{0})=0,\hspace{1cm}\sup_{\boldsymbol{\tau}}\mathcal{E}%
(0,U_{\boldsymbol{\tau}}^{0})=S<\infty, \label{eq:exist-solucao}%
\end{equation}
for some $u_{0}\in\mathbf{D}$. Then, the approximate solutions $\overline
{U}_{\boldsymbol{\tau}}$ and $\underline{U}_{\boldsymbol{\tau}}$ converge
locally uniformly to a function $u:[0,\infty)\rightarrow X$ satisfying
$u(0)=u_{0}$. Moreover, $u$ is independent of the family
$U_{\boldsymbol{\tau}}^{0}$.
\end{teor}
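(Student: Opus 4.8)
The plan is to prove that the net $\{\overline{U}_{\boldsymbol{\tau}}\}$ (and likewise $\{\underline{U}_{\boldsymbol{\tau}}\}$) of approximate solutions in \eqref{eq:sol-interpolante1} is uniformly Cauchy on every compact interval $[0,T]$ as $|\boldsymbol{\tau}|\to 0$, and then to invoke the completeness of $(X,d)$ to get the limit curve. The starting point is the Gronwall-type estimate \eqref{eq:estim-before-residuo}, produced by Lemma \ref{lem:Gronwall-lemma}, which compares the schemes associated with two partitions $\boldsymbol{\tau},\boldsymbol{\eta}$ through the interpolant $d_{\boldsymbol{\tau\eta}}(t,t)$. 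Since $\lambda\in L^{\infty}_{loc}([0,\infty))$, the exponent $\alpha_{\boldsymbol{\tau\eta}}(t)=\int_0^t(\widetilde{\lambda}_{\boldsymbol{\tau}}+\widetilde{\lambda}_{\boldsymbol{\eta}})\,ds$ is uniformly bounded on $[0,T]$, so $e^{\pm\alpha_{\boldsymbol{\tau\eta}}}$ is bounded above and below there; dividing \eqref{eq:estim-before-residuo} by $e^{\alpha_{\boldsymbol{\tau\eta}}(t)}$, it then suffices to show that each term on its right-hand side tends to $0$ uniformly in $t\in[0,T]$ as $|\boldsymbol{\tau}|,|\boldsymbol{\eta}|\to 0$.

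The bookkeeping of these terms rests on the a priori bounds of Lemma \ref{lemma:limit-sec-discret}. Indeed, under \eqref{eq:exist-solucao} the hypothesis \eqref{eq:limit-sec-discret} holds with a common $S$, so \eqref{eq:limit-sec-discret1} gives, uniformly in $\boldsymbol{\tau}$, both $\sum_{j=1}^{n}\big(\mathcal{E}(t_{\boldsymbol{\tau}}^{j},U_{\boldsymbol{\tau}}^{j-1})-\mathcal{E}(t_{\boldsymbol{\tau}}^{j},U_{\boldsymbol{\tau}}^{j})\big)\le C$ and $\sum_{j=1}^{n}d^{2}(U_{\boldsymbol{\tau}}^{j},U_{\boldsymbol{\tau}}^{j-1})/(2\tau_j)\le C$. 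Feeding the first bound into \eqref{eq:estimativa-residuos-R-D}--\eqref{eq:estimativa-d} makes the contributions of $\mathscr{R}_{\boldsymbol{\tau}}$, $\mathscr{D}_{\boldsymbol{\tau}}$ (and of $\widetilde{\lambda}_{\boldsymbol{\tau}}^{-}\mathscr{D}_{\boldsymbol{\tau}}$, $\widetilde{\lambda}_{\boldsymbol{\tau}}^{+}d(\overline{U}_{\boldsymbol{\tau}},\underline{U}_{\boldsymbol{\tau}})$), and their $\boldsymbol{\eta}$-analogues, of order $O(|\boldsymbol{\tau}|)+O(|\boldsymbol{\eta}|)$; the remaining integral $\int_0^t e^{\alpha_{\boldsymbol{\tau\eta}}}\widetilde{\lambda}_{\boldsymbol{\tau}}^{+}d(\overline{U}_{\boldsymbol{\tau}},\underline{U}_{\boldsymbol{\tau}})\,ds$ equals (up to bounded factors) $\sum_j\tau_j d(U_{\boldsymbol{\tau}}^{j},U_{\boldsymbol{\tau}}^{j-1})$, which by Cauchy--Schwarz and the second bound above is $O(|\boldsymbol{\tau}|)$. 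The genuinely new term $\int_0^t e^{2\alpha_{\boldsymbol{\tau\eta}}}\big(G^{+}_{\boldsymbol{\tau\eta}}+G^{+}_{\boldsymbol{\eta\tau}}\big)\,ds$ is controlled exactly by Proposition \ref{prop:residuo-G}, giving $O(|\boldsymbol{\tau}|+|\boldsymbol{\eta}|)$. Finally $d^{2}(U_{\boldsymbol{\tau}}^{0},U_{\boldsymbol{\eta}}^{0})\le 2d^{2}(U_{\boldsymbol{\tau}}^{0},u_0)+2d^{2}(u_0,U_{\boldsymbol{\eta}}^{0})\to 0$ by \eqref{eq:exist-solucao}. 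Combining, $\sup_{t\in[0,T]}d_{\boldsymbol{\tau\eta}}(t,t)\to 0$.

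It remains to pass from $d_{\boldsymbol{\tau\eta}}(t,t)$ to $d\big(\overline{U}_{\boldsymbol{\tau}}(t),\overline{U}_{\boldsymbol{\eta}}(t)\big)$. The minimizing property of $U_{\boldsymbol{\tau}}^{n}$ gives $d^{2}(U_{\boldsymbol{\tau}}^{n},U_{\boldsymbol{\tau}}^{n-1})\le 2\tau_n\big(\mathcal{E}(t_{\boldsymbol{\tau}}^{n},U_{\boldsymbol{\tau}}^{n-1})-\mathcal{E}(t_{\boldsymbol{\tau}}^{n},U_{\boldsymbol{\tau}}^{n})\big)\le 2C|\boldsymbol{\tau}|$, hence $\sup_{t\in[0,T]}d\big(\overline{U}_{\boldsymbol{\tau}}(t),\underline{U}_{\boldsymbol{\tau}}(t)\big)\to 0$; since $d^{2}_{\boldsymbol{\tau}}(t;V)$ differs from $d^{2}(\overline{U}_{\boldsymbol{\tau}}(t),V)$ by a term bounded by $d(\overline{U}_{\boldsymbol{\tau}}(t),\underline{U}_{\boldsymbol{\tau}}(t))$ times a bounded factor (and analogously for $\boldsymbol{\eta}$), the triangle inequality yields $\sup_{t\in[0,T]}d\big(\overline{U}_{\boldsymbol{\tau}}(t),\overline{U}_{\boldsymbol{\eta}}(t)\big)\le \sup_t d_{\boldsymbol{\tau\eta}}(t,t)+o(1)\to 0$. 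By completeness, $\overline{U}_{\boldsymbol{\tau}}$ converges uniformly on $[0,T]$ to a curve $u$; since $T$ is arbitrary this defines $u:[0,\infty)\to X$ with locally uniform convergence, and $\underline{U}_{\boldsymbol{\tau}}$ shares the same limit. Evaluating at $t=0$ gives $u(0)=\lim_{|\boldsymbol{\tau}|\to 0}U_{\boldsymbol{\tau}}^{0}=u_0$. For independence of the family, if $\widetilde{U}_{\boldsymbol{\eta}}^{0}$ is another admissible family with limit $\widetilde u$, I would apply \eqref{eq:estim-before-residuo} to the pair $(\boldsymbol{\tau},\boldsymbol{\eta})$ with data $U_{\boldsymbol{\tau}}^{0},\widetilde{U}_{\boldsymbol{\eta}}^{0}$; as $d(U_{\boldsymbol{\tau}}^{0},\widetilde{U}_{\boldsymbol{\eta}}^{0})\to 0$, the same chain of estimates forces $\sup_{[0,T]}d(u,\widetilde u)=0$.

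The hard part is not the final compactness argument — which, once the estimates are assembled, is a routine adaptation of \cite[pp.~91--92]{Ambrosio} — but the uniform vanishing of all residual terms in \eqref{eq:estim-before-residuo} that are new in the time-dependent setting: the term $G_{\boldsymbol{\tau\eta}}$ from \eqref{eq:residuo-G}, measuring the mismatch between evaluating $\mathcal{E}$ at $\mathcal{T}_{\boldsymbol{\tau}}(t)$ and $\mathcal{T}_{\boldsymbol{\eta}}(t)$, and the presence of the variable parameter $\widetilde{\lambda}_{\boldsymbol{\tau}}$ in place of a constant; these are exactly the obstacles dispatched by Proposition \ref{prop:residuo-G} (via the interleaving of the two partitions together with the uniform moment bound of Lemma \ref{lemma:limit-sec-discret}) and by the hypothesis $\lambda\in L^{\infty}_{loc}$ used to keep $\alpha_{\boldsymbol{\tau\eta}}$ under control.
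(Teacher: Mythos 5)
Your proposal is correct and follows essentially the same route as the paper: the Gronwall estimate \eqref{eq:estim-before-residuo} combined with \eqref{eq:estimativa-residuos-R-D}--\eqref{eq:estimativa-d}, the a priori bounds of Lemma \ref{lemma:limit-sec-discret}, and Proposition \ref{prop:residuo-G} to handle $G_{\boldsymbol{\tau\eta}}$, then passing from $d_{\boldsymbol{\tau\eta}}(t,t)$ to $d(\overline{U}_{\boldsymbol{\tau}}(t),\overline{U}_{\boldsymbol{\eta}}(t))$ and concluding by completeness. The paper compresses the last comparison into the inequality $d^{2}(\overline{U}_{\boldsymbol{\tau}}(t),\overline{U}_{\boldsymbol{\eta}}(t))\leq 3d^{2}_{\boldsymbol{\tau\eta}}(t,t)+3C(|\boldsymbol{\tau}|+|\boldsymbol{\eta}|)$, which is the same observation you make via the $O(|\boldsymbol{\tau}|)$ bound on $d(\overline{U}_{\boldsymbol{\tau}},\underline{U}_{\boldsymbol{\tau}})$.
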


\begin{rem}
\label{remark:convergence-in-Dbar} In fact, the convergence of the approximate
solutions is valid for $u_{0}\in\bar{\mathbf{D}}.$
\end{rem}

\bigskip
\begin{proof}[Proof of Theorem \ref{teor:convergence-scheme-discret}.] The proof follows essentially the same arguments in \cite{Ambrosio} by taking care of the time-dependence. We give some
steps for the reader convenience. By taking a suitable convex combination, we arrive at
\begin{equation*}
d^2(\overline{U}_{\boldsymbol{\tau}}(t),\overline{U}_{\boldsymbol{\eta}}(t))\leq 3d^2_{\boldsymbol{\tau},\boldsymbol{\eta}}
(t,t)+3C(|\boldsymbol{\tau}|+|\boldsymbol{\eta}|).
\end{equation*}
So, using \eqref{eq:estimativa-residuos-R-D}, \eqref{eq:estimativa-D}, \eqref{eq:estimativa-d} joint with Lemma
\ref{lemma:limit-sec-discret}, and the estimate \eqref{eq:estim-before-residuo}, we obtain
\begin{eqnarray*}
d_{\boldsymbol{\tau},\boldsymbol{\eta}}(t,t)&\leq& \left(d^2(U^0_{\boldsymbol{\tau}},U_{\boldsymbol{\eta}})+C(|
\boldsymbol{\tau}|+|\boldsymbol{\eta}|)+\int_0^t{e^{2\alpha_{\boldsymbol{\tau},\boldsymbol{\eta}}(t)}(G^+_{
\boldsymbol{\tau},\boldsymbol{\eta}}(t)+G^+_{
\boldsymbol{\eta},\boldsymbol{\tau}}(t))\ dt}\right)^{1/2}\\
& &+C(|\boldsymbol{\tau}|+|\boldsymbol{\eta}|).
\end{eqnarray*}
We conclude the convergence by using Proposition \ref{prop:residuo-G} and the completeness of the space $X$.
\end{proof}

\section{Regularity}

\label{sect:regularity}\hspace{0.5cm}In this section we show that the function
$u$ obtained in Theorem \ref{teor:convergence-scheme-discret} is, in fact, a
solution for (\ref{eq:sist-grad-flow})-(\ref{eq:sist-grad-flow-initial-data})
in the sense of Definition \ref{defn:def-solution}. For that matter, we need
to show some regularity properties for $u$. We begin by recalling the De
Giorgi interpolation.

\begin{defn}
\label{definition:DiGiorgi} Let $(U_{\boldsymbol{\tau}}^{n})_{n}$ be a
solution for the variational scheme (\ref{eq:minimization-problem}), defined for $t_{\boldsymbol{\tau}}^{n}\leq T+\tau^{\ast}$. Define the De Giorgi interpolation
\[
\widetilde{U}_{\boldsymbol{\tau}}(t)=\widetilde{U}_{\boldsymbol{\tau}}%
(t_{\boldsymbol{\tau}}^{n-1}+\delta), \text{ \ for \ } t\in(t_{\boldsymbol{\tau}}%
^{n-1},t_{\boldsymbol{\tau}}^{n}] \text{ \ and }\delta=t-t_{\boldsymbol{\tau}}%
^{n-1},
\]
as the unique minimizer of the functional $v\in X\rightarrow
\mathbf{E}(t_{\boldsymbol{\tau}}^{n-1}+\delta,\delta,U_{\boldsymbol{\tau}}%
^{n-1},v).$
\end{defn}

We have that the De Giorgi interpolation also converges locally uniformly to
the same function $u$ in Theorem \ref{teor:convergence-scheme-discret}.

\begin{prop}
\label{prop:DiGiorgi-interp-converg} Assume the same hypotheses of Theorem
\ref{teor:convergence-scheme-discret}. There is a constant $C>0$ independent
of $\boldsymbol{\tau}$ such that
\begin{equation}
d^{2}(\underline{U}_{\boldsymbol{\tau}}(t),\widetilde{U}_{\boldsymbol{\tau}}%
(t))\leq|\boldsymbol{\tau}|C\left(  1+\frac{t-t_{\boldsymbol{\tau}}^{n-1}%
}{\mathcal{T}_{\boldsymbol{\tau}}(t)-t}\int_{t}^{\mathcal{T}%
_{\boldsymbol{\tau}}(t)}{\beta(r)\ dr}\right)  ,
\label{eq:converg-interp-degiorgi}%
\end{equation}
for all $t\in(t_{\boldsymbol{\tau}}^{n-1},t_{\boldsymbol{\tau}}^{n}]$. Thus,
$\widetilde{U}_{\boldsymbol{\tau}}$ converges to the function $u$ given in Theorem
\ref{teor:convergence-scheme-discret} a.e. in $\lbrack0,T]$. Moreover, the convergence is uniform
provided that the function $\beta$ in {\bf{E3}} belongs to $L_{loc}^{\infty
}([0,\infty))$.
\end{prop}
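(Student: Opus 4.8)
The plan is to compare the two minimizers $\underline{U}_{\boldsymbol{\tau}}(t)=U_{\boldsymbol{\tau}}^{n-1}$ and $\widetilde{U}_{\boldsymbol{\tau}}(t)$ at a fixed $t\in(t_{\boldsymbol{\tau}}^{n-1},t_{\boldsymbol{\tau}}^{n}]$ with $\delta=t-t_{\boldsymbol{\tau}}^{n-1}$, and then let $|\boldsymbol{\tau}|\to0$. First I would recall that, by definition, $\widetilde{U}_{\boldsymbol{\tau}}(t)$ minimizes $v\mapsto\mathbf{E}(t_{\boldsymbol{\tau}}^{n-1}+\delta,\delta,U_{\boldsymbol{\tau}}^{n-1};v)$, i.e.\ the map $v\mapsto\mathcal{E}(t,v)+\frac{d^2(U_{\boldsymbol{\tau}}^{n-1},v)}{2\delta}$. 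The key tool is Corollary \ref{cor:integral-identity}, applied with base time $t_{\boldsymbol{\tau}}^{n-1}$, step $\delta$ and $u=U_{\boldsymbol{\tau}}^{n-1}$: this gives
\[
\frac{d^{2}(U_{\boldsymbol{\tau}}^{n-1},\widetilde{U}_{\boldsymbol{\tau}}(t))}{2\delta}+\int_{0}^{\delta}{\frac{d^{2}(U_{\boldsymbol{\tau}}^{n-1},u_{r}^{t_{\boldsymbol{\tau}}^{n-1}+r})}{2r^{2}}\,dr}=\int_{0}^{\delta}{\partial_{t}\mathcal{E}(t_{\boldsymbol{\tau}}^{n-1}+r,u_{r}^{t_{\boldsymbol{\tau}}^{n-1}+r})\,dr}+\mathcal{E}(t_{\boldsymbol{\tau}}^{n-1},U_{\boldsymbol{\tau}}^{n-1})-\mathcal{E}(t,\widetilde{U}_{\boldsymbol{\tau}}(t)).
\]
Dropping the nonnegative integral on the left and using \textbf{E3} to bound $\int_{0}^{\delta}|\partial_{t}\mathcal{E}(t_{\boldsymbol{\tau}}^{n-1}+r,u_{r}^{t_{\boldsymbol{\tau}}^{n-1}+r})|\,dr\le\int_{t_{\boldsymbol{\tau}}^{n-1}}^{t}\beta(r)\,dr\,(1+\sup_r d^2(u^*,u_r^{t_{\boldsymbol{\tau}}^{n-1}+r}))$, together with $\mathcal{E}(t_{\boldsymbol{\tau}}^{n-1},U_{\boldsymbol{\tau}}^{n-1})-\mathcal{E}(t,\widetilde{U}_{\boldsymbol{\tau}}(t))$, I would obtain a bound on $d^{2}(U_{\boldsymbol{\tau}}^{n-1},\widetilde{U}_{\boldsymbol{\tau}}(t))$ of the form $2\delta$ times (a residual $\mathcal{E}$-difference plus a $\beta$-integral term).

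Next I would control the $\mathcal{E}$-difference $\mathcal{E}(t_{\boldsymbol{\tau}}^{n-1},U_{\boldsymbol{\tau}}^{n-1})-\mathcal{E}(t,\widetilde{U}_{\boldsymbol{\tau}}(t))$. Using that $\widetilde{U}_{\boldsymbol{\tau}}(t)$ is the minimizer for step $\delta$ one has $\mathcal{E}(t,\widetilde{U}_{\boldsymbol{\tau}}(t))\le\mathcal{E}(t,U_{\boldsymbol{\tau}}^{n})+\frac{d^2(U_{\boldsymbol{\tau}}^{n-1},U_{\boldsymbol{\tau}}^{n})}{2\delta}-\frac{d^2(U_{\boldsymbol{\tau}}^{n-1},U_{\boldsymbol{\tau}}^{n})}{2\tau_n}$ (comparing with the full-step minimizer $U_{\boldsymbol{\tau}}^{n}$), and also $\mathcal{E}(t,\widetilde{U}_{\boldsymbol{\tau}}(t))\ge\mathcal{E}(t_{\boldsymbol{\tau}}^{n},\widetilde{U}_{\boldsymbol{\tau}}(t))-\int_t^{\mathcal{T}_{\boldsymbol{\tau}}(t)}\beta(r)\,dr\,(1+d^2(u^*,\widetilde{U}_{\boldsymbol{\tau}}(t)))\ge\mathscr{E}_{\mathcal{T}_{\boldsymbol{\tau}}(t),\tau_n}(U_{\boldsymbol{\tau}}^{n-1})-\cdots$ via \textbf{E3}. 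Combining these with the discrete energy estimate in Lemma \ref{lemma:limit-sec-discret} — which gives a uniform bound $d^2(u^*,U_{\boldsymbol{\tau}}^{j})\le C$ and $\sum_j(\mathcal{E}(t_{\boldsymbol{\tau}}^{j},U_{\boldsymbol{\tau}}^{j-1})-\mathcal{E}(t_{\boldsymbol{\tau}}^{j},U_{\boldsymbol{\tau}}^{j}))\le C$ — and with the uniform bound $d^2(u^*,\widetilde{U}_{\boldsymbol{\tau}}(t))\le C$ coming from Lemma \ref{lem:minimizer-estimative1} and Lemma \ref{lem:ambrosio-desig} applied to the De Giorgi step, I expect the $\mathcal{E}$-difference to be controlled by $C\big(1+\frac{\delta}{\tau_n-\delta}\int_t^{\mathcal{T}_{\boldsymbol{\tau}}(t)}\beta\big)$ up to a multiplicative $\frac{1}{2\delta}$ that cancels. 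Assembling everything yields \eqref{eq:converg-interp-degiorgi}, since $\tau_n=\mathcal{T}_{\boldsymbol{\tau}}(t)-t_{\boldsymbol{\tau}}^{n-1}$ and $\tau_n-\delta=\mathcal{T}_{\boldsymbol{\tau}}(t)-t$, with the factor $\delta\le|\boldsymbol{\tau}|$ extracted.

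Finally, the convergence statement: from \eqref{eq:converg-interp-degiorgi} and $\beta\in L^1_{loc}$ one gets $d^2(\underline{U}_{\boldsymbol{\tau}}(t),\widetilde{U}_{\boldsymbol{\tau}}(t))\to0$ for a.e.\ $t\in[0,T]$ (for each fixed $t$ not equal to a partition point, the factor $\frac{t-t_{\boldsymbol{\tau}}^{n-1}}{\mathcal{T}_{\boldsymbol{\tau}}(t)-t}$ multiplying $\int_t^{\mathcal{T}_{\boldsymbol{\tau}}(t)}\beta$ stays bounded along a suitable subsequence, or one argues via integrating in $t$), and since $\underline{U}_{\boldsymbol{\tau}}\to u$ locally uniformly by Theorem \ref{teor:convergence-scheme-discret}, the triangle inequality gives $\widetilde{U}_{\boldsymbol{\tau}}(t)\to u(t)$ a.e. If moreover $\beta\in L^\infty_{loc}$, then $\int_t^{\mathcal{T}_{\boldsymbol{\tau}}(t)}\beta(r)\,dr\le\|\beta\|_{L^\infty}(\mathcal{T}_{\boldsymbol{\tau}}(t)-t)$, so the offending quotient is bounded by $\|\beta\|_{L^\infty}(t-t_{\boldsymbol{\tau}}^{n-1})\le\|\beta\|_{L^\infty}|\boldsymbol{\tau}|$, and the right-hand side of \eqref{eq:converg-interp-degiorgi} is $O(|\boldsymbol{\tau}|)$ uniformly in $t$, giving uniform convergence. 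The main obstacle I anticipate is getting the $\frac{1}{2\delta}$ from Corollary \ref{cor:integral-identity} to cancel cleanly against the $\mathcal{E}$-difference bound while keeping the singular term $\frac{\delta}{\tau_n-\delta}$ under control near $t=\mathcal{T}_{\boldsymbol{\tau}}(t)$ — that is, carefully matching the De Giorgi step length $\delta$ against the full step $\tau_n$ so that the residual terms collapse to exactly the form in \eqref{eq:converg-interp-degiorgi} rather than something weaker.
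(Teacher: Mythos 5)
Your route has a genuine gap: the tool you lean on, Corollary \ref{cor:integral-identity}, is not available under the hypotheses of this proposition. That corollary inherits the assumptions of Proposition \ref{prop:differentiability-property1}, in particular that $t\mapsto\mathcal{E}(t,u)$ is differentiable with a set of differentiability points independent of $u$, so that $\partial_t\mathcal{E}$ makes sense in the identity; the proposition you are proving assumes only the hypotheses of Theorem \ref{teor:convergence-scheme-discret}, i.e. \textbf{E1}--\textbf{E5} and \eqref{eq:exist-solucao}. The paper's proof is built precisely to avoid $\partial_t\mathcal{E}$: it plays the two minimizer properties against each other --- $\widetilde U_{\boldsymbol{\tau}}(t)$ minimizes $\mathbf{E}(t,\delta,U^{n-1}_{\boldsymbol{\tau}};\cdot)$ and is compared with $\overline U_{\boldsymbol{\tau}}(t)$, while $U^{n}_{\boldsymbol{\tau}}$ minimizes $\mathbf{E}(t^{n}_{\boldsymbol{\tau}},\tau_n,U^{n-1}_{\boldsymbol{\tau}};\cdot)$ and is compared with $\widetilde U_{\boldsymbol{\tau}}(t)$ --- and then applies \textbf{E3} twice, at the fixed points $\widetilde U_{\boldsymbol{\tau}}(t)$ and $\overline U_{\boldsymbol{\tau}}(t)$, to shift between the times $t$ and $t^{n}_{\boldsymbol{\tau}}$. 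Rearranging leaves $\bigl(\tfrac{1}{2\delta}-\tfrac{1}{2\tau_n}\bigr)\bigl(d^2(\underline U_{\boldsymbol{\tau}},\widetilde U_{\boldsymbol{\tau}})-d^2(\underline U_{\boldsymbol{\tau}},\overline U_{\boldsymbol{\tau}})\bigr)$ bounded by $\int_t^{\mathcal{T}_{\boldsymbol{\tau}}(t)}\beta$ times uniformly bounded quantities, and multiplying by $\tfrac{2\delta\tau_n}{\tau_n-\delta}$ produces exactly the factor $\tfrac{t-t^{n-1}_{\boldsymbol{\tau}}}{\mathcal{T}_{\boldsymbol{\tau}}(t)-t}$ in \eqref{eq:converg-interp-degiorgi}, while $d^2(\underline U_{\boldsymbol{\tau}},\overline U_{\boldsymbol{\tau}})\le C|\boldsymbol{\tau}|$ comes from Lemma \ref{lemma:limit-sec-discret}.

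Two further remarks. First, the part of your argument that is supposed to reproduce this cancellation is not correct as written: comparing $\widetilde U_{\boldsymbol{\tau}}(t)$ with $U^{n}_{\boldsymbol{\tau}}$ gives $\mathcal{E}(t,\widetilde U_{\boldsymbol{\tau}}(t))\le\mathcal{E}(t,U^{n}_{\boldsymbol{\tau}})+\tfrac{d^2(U^{n-1}_{\boldsymbol{\tau}},U^{n}_{\boldsymbol{\tau}})}{2\delta}-\tfrac{d^2(U^{n-1}_{\boldsymbol{\tau}},\widetilde U_{\boldsymbol{\tau}}(t))}{2\delta}$, not the inequality you state, and your lower bound via $\mathscr{E}_{\mathcal{T}_{\boldsymbol{\tau}}(t),\tau_n}(U^{n-1}_{\boldsymbol{\tau}})$ silently drops the term $\tfrac{d^2(U^{n-1}_{\boldsymbol{\tau}},\widetilde U_{\boldsymbol{\tau}}(t))}{2\tau_n}$, which is exactly the term that must be kept and moved to the left; you yourself flag this assembly only as something you ``expect''. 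Second, if one did add the differentiability hypothesis so that Corollary \ref{cor:integral-identity} applies, your first-paragraph computation (combined with $\mathcal{E}(t^{n-1}_{\boldsymbol{\tau}},U^{n-1}_{\boldsymbol{\tau}})\le S+C$, the uniform bound on $d^2(u^*,u_r^{t^{n-1}_{\boldsymbol{\tau}}+r})$ from \eqref{eq:estimative-aux2}, and the lower bound $\mathcal{E}(t,\widetilde U_{\boldsymbol{\tau}}(t))\ge -C$ coming from \textbf{E4}) would in fact yield the different estimate $d^2(\underline U_{\boldsymbol{\tau}}(t),\widetilde U_{\boldsymbol{\tau}}(t))\le C|\boldsymbol{\tau}|\bigl(1+\int_{t^{n-1}_{\boldsymbol{\tau}}}^{t}\beta(r)\,dr\bigr)$, which is enough for the convergence claims (even uniformly) but is not the stated inequality \eqref{eq:converg-interp-degiorgi} and rests on assumptions the statement does not grant.
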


\begin{proof}
Let $N\in\mathbb{N}$ be such that $T\in(t_{\boldsymbol{\tau}}^{N-1},t_{\boldsymbol{\tau}}^N]$. First, we will show that the discrete solution $(U_{\boldsymbol{\tau}}^n)_{n=0}^{N}$ satisfies the inequality
\begin{equation}
\label{eq:prop-DiGiorgi-conv-aux}
\mathcal{E}(t_{\boldsymbol{\tau}}^n,U_{\boldsymbol{\tau}}^n)\leq \mathcal{E}(0,U_{\boldsymbol{\tau}}^0)+C,
\end{equation}
for some constant $C$ independent of $\boldsymbol{\tau}$. In fact, by using \textbf{E3} and the minimizer property (\ref{eq:minimization-problem}) of $U_{\boldsymbol{\tau}}^n$, we obtain
\begin{eqnarray*}
\mathcal{E}(t_{\boldsymbol{\tau}}^n,U_{\boldsymbol{\tau}}^n)\leq\mathcal{E}(t_{\boldsymbol{\tau}}^{n-1},U_
{\boldsymbol{\tau}}^{n-1})+\int_{t_{\boldsymbol{\tau}}^{n-1}}^{t_{\boldsymbol{\tau}}^{n}}{\beta(r)\ dr}(1+d(u^*,
U_{\boldsymbol{\tau}}^{n-1})).
\end{eqnarray*}
Recall that $d(u^*,U_{\boldsymbol{\tau}}^{n-1})$ is bounded by a constant $C$ that depends on $T$ and is independent of $\boldsymbol{\tau}$. Proceeding inductively, it follows that
\begin{equation*}
\mathcal{E}(t_{\boldsymbol{\tau}}^n,U_{\boldsymbol{\tau}}^n)\leq \mathcal{E}(0,U_{\boldsymbol{\tau}}^0)+C
\int_0^{T+\tau^*}{\beta(r)\ dr},
\end{equation*}
from where we get (\ref{eq:prop-DiGiorgi-conv-aux}). Now, estimate (\ref{eq:estimative-aux2}) in Lemma \ref{lem:minimizer-estimative1} and (\ref{eq:prop-DiGiorgi-conv-aux}) give
\begin{eqnarray*}
d(\widetilde{U}_{\boldsymbol{\tau}}(t),u^*)&\leq& 4\tau^*\left(\mathcal{E}(t^{n-1}_{\boldsymbol{\tau}},
\underline{U}_{\boldsymbol{\tau}}(t))+C(1+d^2(u^*,\underline{U}_{\boldsymbol{\tau}}(t)))\right)\\
&\leq&4\tau^*(\mathcal{E}(0,U_{\boldsymbol{\tau}}^0)+C),
\end{eqnarray*}
for $t\in(t^{n-1}_{\boldsymbol{\tau}},t^{n}_{\boldsymbol{\tau}}]$. Taking $\delta=t-t^{n-1}_{\boldsymbol{\tau}}$ for $t\in(t^{n-1}_{\boldsymbol{\tau}}
,t^{n}_{\boldsymbol{\tau}}]$, we can estimate
\begin{eqnarray*}
\frac{d^2(\underline{U}_{\boldsymbol{\tau}}(t),\widetilde{U}_{\boldsymbol{\tau}}(t))}{2\delta}+\mathcal{E}(t,
\widetilde{U}_{\boldsymbol{\tau}}(t))&\leq&\frac{d^2(\underline{U}_{\boldsymbol{\tau}}(t),\overline{U}_
{\boldsymbol{\tau}}(t))}{2\delta}+\mathcal{E}(t,\overline{U}_{\boldsymbol{\tau}}(t))\\
&\leq&\frac{d^2(\underline{U}_{\boldsymbol{\tau}}(t),\widetilde{U}_{\boldsymbol{\tau}}(t))}{2\tau_n}+\mathcal{E}(t_{\boldsymbol{\tau}}^{n},\widetilde{U}_{\boldsymbol{\tau}}(t))\\
&\ &+\mathcal{E}(t,\overline{U}_{\boldsymbol{\tau}}(t))-\mathcal{E}(t_{\boldsymbol{\tau}}^{n},\overline{U}_{\boldsymbol{\tau}}(t)))\\
&\ &+\left(\frac{1}{2\delta}-\frac{1}{2\tau_n}\right)d^2(\underline{U}_{\boldsymbol{\tau}}(t),\overline{U}_{\boldsymbol{\tau}}(t)).
\end{eqnarray*}
Rearranging terms and using \textbf{E3}, it follows that
\begin{eqnarray*}
\left(\frac{1}{2\delta}-\frac{1}{2\tau_n}\right)d^2(\underline{U}_{\boldsymbol{\tau}}(t),\widetilde{U}_{\boldsymbol
{\tau}}(t))&\leq&\left(\frac{1}{2\delta}-\frac{1}{2\tau_n}\right)d^2(\underline{U}_{\boldsymbol{\tau}}(t),
\overline{U}_{\boldsymbol{\tau}}(t))+\int_t^{t_{\boldsymbol{\tau}}^n}{\beta(r)\ dr}\\
&\ &\times\left(2+d^2(u^*,\overline{U}_{\boldsymbol{\tau}}(t))+d^2(u^*,\widetilde{U}_{\boldsymbol{\tau}}(t))\right).
\end{eqnarray*}
Recalling that $\mathcal{E}(0,U_{\boldsymbol{\tau}}^0)\leq S$ and using \eqref{eq:prop-DiGiorgi-conv-aux},
we obtain (\ref{eq:converg-interp-degiorgi}) and then the convergence of $\widetilde{U}_{\boldsymbol{\tau}}(t)$ to $u(t)$ in the set of Lebesgue points of $\beta$.
\end{proof}

Before proceeding, let us recall a well-known estimate for the slope
$|\partial\mathcal{E}(t)|.$ Recall that $u_{\tau}^{t+\tau}$ stands for the
minimizer of $\mathbf{E}(t+\tau,\tau,u;\cdot)$. Then $u_{\tau}^{t+\tau}\in
Dom(|\partial\mathcal{E}(t+\tau)|)$ and
\begin{equation}
|\partial\mathcal{E}(t+\tau)|(u_{\tau}^{t+\tau})\leq\frac{d(u,u_{\tau}%
^{t+\tau})}{\tau}. \label{eq:est-slope-local}%
\end{equation}
Under the convexity hypothesis \textbf{E5}, we have that the local slope
$|\partial\mathcal{E}(t)|$ is lower semicontinuous and
\begin{equation}
|\partial\mathcal{E}(t)|(u)=\sup_{v\neq u}\left(  \frac{\mathcal{E}%
(t,u)-\mathcal{E}(t,v)}{d(u,v)}+\frac{1}{2}\lambda(t)d(u,v)\right)  ^{+}.
\label{eq:formula-inclinacao}%
\end{equation}

The next lemma will be useful to show $W_{loc}^{1,1}$-regularity for functions
with a certain type of control in their variations.

\begin{lem}
\label{lem:cond-cont-absolute} Let $T>0$ and $f,g,\beta\in L^{1}([0,T])$ be
such that
\[
|f(t)-f(s)|\leq(g(t)+g(s))|t-s|+\int_{s}^{t}{\beta(r)\ dr},
\]
for $s<t$. Then $f\in W^{1,1}([h,T-h]),$ for all $0<h<T/2$.
\end{lem}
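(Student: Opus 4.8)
The plan is to reduce the stated hypothesis to the standard sufficient condition for a function to lie in a Sobolev space $W^{1,1}$ of one variable, namely a uniform (in the partition) bound on the total variation via difference quotients together with absolute continuity. First I would fix $0<h<T/2$ and work on the compact interval $[h,T-h]$. Write $B(t)=\int_0^t\beta(r)\,dr$, which is absolutely continuous on $[0,T]$ since $\beta\in L^1([0,T])$; the hypothesis then reads
\[
|f(t)-f(s)|\leq (g(t)+g(s))|t-s|+|B(t)-B(s)|,\qquad s<t.
\]
The term $(g(t)+g(s))|t-s|$ is the obstacle: $g$ is merely $L^1$, so pointwise it can be large, but it is multiplied by $|t-s|$, which is exactly the structure that makes difference quotients integrable.

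The key step is to estimate, for a fixed small increment $r>0$, the quantity $\int_h^{T-h}|f(t+r)-f(t)|\,dt$. Using the hypothesis with $s=t$, $t\rightsquigarrow t+r$,
\[
\int_h^{T-h}|f(t+r)-f(t)|\,dt\leq r\int_h^{T-h}\bigl(g(t+r)+g(t)\bigr)\,dt+\int_h^{T-h}|B(t+r)-B(t)|\,dt.
\]
The first integral on the right is at most $2r\|g\|_{L^1([0,T])}$, and the second is at most $\int_h^{T-h}\int_t^{t+r}\beta(\rho)\,d\rho\,dt\leq r\|\beta\|_{L^1([0,T])}$ by Tonelli. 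Hence
\[
\frac1r\int_h^{T-h}|f(t+r)-f(t)|\,dt\leq 2\|g\|_{L^1([0,T])}+\|\beta\|_{L^1([0,T])}=:M,
\]
a bound independent of $r$. By the classical characterization of $W^{1,1}$ on an interval (equivalently, of $BV\cap$ absolute continuity, or one may invoke that an $L^1$ function whose translates satisfy $\limsup_{r\to0}\frac1r\|f(\cdot+r)-f(\cdot)\|_{L^1}<\infty$ has a distributional derivative in the dual of $L^\infty$ that is in fact a finite measure, and then upgrade to $L^1$), it follows that $f\in BV([h,T-h])$ with $\|f'\|_{\mathrm{meas}}\le M$. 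To conclude $f\in W^{1,1}$ rather than merely $BV$, I would note that the hypothesis forces $f$ to be absolutely continuous: for any finite collection of disjoint intervals $(s_i,t_i)$,
\[
\sum_i|f(t_i)-f(s_i)|\leq \sum_i(g(t_i)+g(s_i))|t_i-s_i|+\sum_i\int_{s_i}^{t_i}\beta(r)\,dr,
\]
and if $g$ were bounded this tends to $0$ as $\sum(t_i-s_i)\to0$; since $g\in L^1$ one first truncates $g$ at level $\Lambda$, controls the tail $\int_{\{g>\Lambda\}}g$, and lets $\Lambda\to\infty$, using absolute continuity of $r\mapsto\int_0^r(g+\beta)$. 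An absolutely continuous function of bounded variation lies in $W^{1,1}$, which gives the claim.

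The main obstacle is precisely handling the unboundedness of $g$: one cannot directly bound $(g(t)+g(s))|t-s|$ pointwise, so the argument must be integral in nature throughout — the Tonelli/Fubini interchange in the translation estimate and the truncation argument for absolute continuity are where the care is needed. Everything else (compactness of $[h,T-h]$ away from the endpoints, so that $t+r\in[0,T]$ for small $r$; passing from the translation bound to membership in $W^{1,1}$) is standard real analysis. Since $f,g,\beta\in L^1([0,T])$ are given, all the integrals above are finite, and the bound $M$ is genuinely independent of the partition/increment, which is what the convergence arguments invoking this lemma will need.
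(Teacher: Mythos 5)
Your opening computation is exactly the paper's: bound the $L^{1}([h,T-h])$-norm of the difference quotients of $f$ by $2\|g\|_{L^{1}}$ plus the corresponding quotient of $B(t)=\int_0^t\beta$, uniformly in the increment. That part is fine and gives $f\in BV([h,T-h])$. The genuine gap is in your upgrade from $BV$ to $W^{1,1}$ via absolute continuity of $f$. The truncation argument you sketch cannot work: the quantity $\sum_i\bigl(g(t_i)+g(s_i)\bigr)|t_i-s_i|$ involves \emph{pointwise} values of the merely-$L^{1}$ function $g$ at endpoints you are free to choose, and these are not controlled by the tail $\int_{\{g>\Lambda\}}g$ or by any integral of $g$. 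Concretely, with $g(t)=|t-t_0|^{-1/2}$ (which is in $L^{1}$), the single interval $[s_1,t_1]=[t_0+\epsilon^{2},\,t_0+\epsilon^{2}+\epsilon]$ gives $g(s_1)(t_1-s_1)\approx 1$ while the total length $\epsilon\to0$; so the hypothesis, used interval-by-interval, simply does not yield the $\varepsilon$--$\delta$ criterion for absolute continuity, no matter how you truncate. (This does not mean $f$ fails to be absolutely continuous; it means this route cannot establish it.)

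The missing idea, which is what the paper's terse ``difference quotient argument'' relies on, is that the hypothesis gives more than a uniform $L^{1}$ bound on $\Delta_{\tilde h}f(t)=\frac{f(t+\tilde h)-f(t)}{\tilde h}$: it gives the pointwise domination $|\Delta_{\tilde h}f(t)|\le g(t)+g(t+\tilde h)+\bigl|\frac{1}{\tilde h}\int_t^{t+\tilde h}\beta(r)\,dr\bigr|$ on $[h,T-h]$. Each family on the right is uniformly integrable as $\tilde h$ ranges over $(0,h)$: for $|A|$ small, $\int_A g(t+\tilde h)\,dt=\int_{A+\tilde h}g$ and $\int_A\frac{1}{\tilde h}\int_t^{t+\tilde h}\beta\,dt\le\sup_{|E|\le|A|}\int_E\beta$ are small uniformly in $\tilde h$, by absolute continuity of the integrals of $g$ and $\beta$. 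Hence $\{\Delta_{\tilde h}f\}$ is uniformly integrable, so by Dunford--Pettis it is weakly sequentially compact in $L^{1}([h,T-h])$; any weak limit along $\tilde h_k\to0$ is the distributional derivative of $f$, which is therefore an $L^{1}$ function rather than just a measure, i.e.\ $f\in W^{1,1}([h,T-h])$. In short: keep your estimate, but replace the ``BV plus absolute continuity via truncation'' step by equi-integrability of the difference quotients and weak $L^{1}$ compactness.
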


\begin{proof}
Since the function $t\to \int_0^t\beta(r)\ dr$ belongs to $W^{1,1}([0,T])$, we have the difference quotient property
\begin{equation}
\label{eq:lem-cond-cont-absoluta} \sup_{0<\vert\tilde{h}\vert<h}\int_h^{T-h}{\left\vert\frac{1}{\tilde{h}}
\int_t^{t+\tilde{h}}{\beta(r)\ dr}\right\vert\ dt}<\infty.
\end{equation}
Using the notation
$$\Delta_{\tilde{h}}(f)(t)=\frac{f(t+\tilde{h})-f(t)}{\tilde{h}},$$
we obtain
\begin{eqnarray*}
\int_h^{T-h}{\vert\Delta_{\tilde{h}}(f)(t)\vert\ dt}&\leq&\int_h^{T-h}{g(t)+g(t+\tilde{h})+\left\vert\frac{1}{\tilde{h}}
\int_t^{t+\tilde{h}}{\beta(r)\ dr}\right\vert\ dt}\\
&\leq&2\left\|g\right\|_{L^1}+\int_h^{T-h}{\left\vert\frac{1}{\tilde{h}}\int_t^{t+\tilde{h}}{\beta(r)\ dr}\right\vert\ dt},
\end{eqnarray*}
which gives the desired regularity by employing a difference quotient argument.
\end{proof}

Now we are ready to show that the limit $u$ in Theorem
\ref{teor:convergence-scheme-discret} is a time-dependent gradient flow in the
sense of Definition \ref{defn:def-solution}.

\begin{teor}
\label{theor:sol-flux-grad} Assume \textbf{E1} to \textbf{E5}. The limit
$u:[0,\infty)\rightarrow X$ in Theorem \ref{teor:convergence-scheme-discret}
is locally absolutely continuous and its metric derivative $|u^{\prime}|$
belongs to $L_{loc}^{2}([0,\infty))$. Moreover, if the function $t\rightarrow
\mathcal{E}(t,u)$ is differentiable for $u\in\mathbf{D}$, its time-derivative
is upper semicontinuous in the $u$-variable (with respect to the metric), and
the property
\begin{equation}
\label{eq:lower-semicon-of-derivative}t_{n}\downarrow t,\ d(u_{n},u)\to0
\text{ as }n\to\infty\Rightarrow\liminf_{n\to\infty}\frac{\mathcal{E}%
(t_{n},u_{n})-\mathcal{E}(t,u_{n})} {t_{n}-t}\geq\partial_{t}\mathcal{E}(t,u)
\end{equation}
holds true, then the function $t\rightarrow\mathcal{E}(t,u(t))$ is absolutely
continuous and satisfies the identity
\begin{equation}
\mathcal{E}(t,u(t))-\mathcal{E}(0,u(0))=\int_{0}^{t}{\partial_{t}%
\mathcal{E}(s,u(s))\ ds}-\frac{1}{2}\int_{0}^{t}{|u^{\prime}|^{2}%
(s)\ ds}-\frac{1}{2}\int_{0}^{t}{|\partial\mathcal{E}(s)|^{2}(u(s))\ ds}.
\label{eq:energy-identity}%
\end{equation}
In particular, $u$ is a solution for (\ref{eq:sist-grad-flow}%
)-(\ref{eq:sist-grad-flow-initial-data}).
\end{teor}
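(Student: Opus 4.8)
The plan is to follow the classical strategy from \cite{Ambrosio} (Chapters 2--4), adapting each step to handle the time-dependence through the residual terms and the interpolations $\mathcal{T}_{\boldsymbol{\tau}}$, $\widetilde{\lambda}_{\boldsymbol{\tau}}$ introduced above. First I would establish the local absolute continuity of $u$. From Lemma \ref{lemma:limit-sec-discret} we have the uniform discrete energy-dissipation bound $\sum_j d^2(U_{\boldsymbol{\tau}}^{j},U_{\boldsymbol{\tau}}^{j-1})/(2\tau_j)\leq C$; this gives an equi-$\tfrac12$-H\"older-type control on the piecewise-constant interpolants, so that passing to the limit (using the locally uniform convergence from Theorem \ref{teor:convergence-scheme-discret}) yields that $u$ is locally absolutely continuous with $|u'|\in L^2_{loc}([0,\infty))$ and $\int_0^t |u'|^2(s)\,ds\leq\liminf_{|\boldsymbol{\tau}|\to0}\sum_{j}d^2(U_{\boldsymbol{\tau}}^{j},U_{\boldsymbol{\tau}}^{j-1})/\tau_j$ by lower semicontinuity of the $L^2$ norm of metric speeds. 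Simultaneously, the De Giorgi interpolation $\widetilde{U}_{\boldsymbol{\tau}}$ from Definition \ref{definition:DiGiorgi} satisfies, by the variational inequality for Moreau--Yosida minimizers (a time-dependent version of the De Giorgi a priori estimate), a bound of the form
\begin{equation*}
\frac12\sum_j \frac{d^2(U_{\boldsymbol{\tau}}^{j},U_{\boldsymbol{\tau}}^{j-1})}{\tau_j}+\frac12\int_0^{T}|\partial\mathcal{E}(\mathcal{T}_{\boldsymbol{\tau}}(t))|^2(\widetilde{U}_{\boldsymbol{\tau}}(t))\,dt\leq \mathcal{E}(0,U_{\boldsymbol{\tau}}^0)-\mathcal{E}(\mathcal{T}_{\boldsymbol{\tau}}(T),\overline{U}_{\boldsymbol{\tau}}(T))+\text{(residual)},
\end{equation*}
where the residual is controlled by $\int_0^{T+\tau^*}\beta(r)\,dr$ times the uniform bound on $d^2(u^*,\cdot)$ via \textbf{E3} and Lemma \ref{lem:minimizer-estimative1}. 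Using Proposition \ref{prop:DiGiorgi-interp-converg} (so that $\widetilde{U}_{\boldsymbol{\tau}}\to u$ a.e., or uniformly when $\beta\in L^\infty_{loc}$) together with the lower semicontinuity of the slope $|\partial\mathcal{E}(t)|$ stated after \eqref{eq:est-slope-local}, Fatou's lemma yields $|\partial\mathcal{E}(\cdot)|(u(\cdot))\in L^2_{loc}([0,\infty))$ and the ``$\geq$'' inequality in the energy identity.

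Next I would obtain the absolute continuity of $t\mapsto\mathcal{E}(t,u(t))$. Here Corollary \ref{cor:integral-identity} (the integral identity for the minimizer of the Moreau--Yosida approximation) is the discrete template: passing to the continuous limit along the partition and using \textbf{E3} to control $\mathcal{E}(\mathcal{T}_{\boldsymbol{\tau}}(t),\cdot)-\mathcal{E}(t,\cdot)$, one derives for the limit curve an estimate
\[
|\mathcal{E}(t,u(t))-\mathcal{E}(s,u(s))|\leq \bigl(g(t)+g(s)\bigr)|t-s|+\int_s^t\beta(r)\bigl(1+d^2(u^*,u(r))\bigr)\,dr
\]
with $g\in L^1_{loc}$ built from the metric-speed and slope bounds (Young's inequality applied to $|\mathcal{E}(t,u(t))-\mathcal{E}(t,u(s))|\leq \int_s^t|\partial\mathcal{E}(r)|(u(r))|u'|(r)\,dr$ via \eqref{eq:formula-inclinacao}); since $d^2(u^*,u(\cdot))$ is locally bounded, this is exactly the hypothesis of Lemma \ref{lem:cond-cont-absolute}, giving $t\mapsto\mathcal{E}(t,u(t))\in W^{1,1}_{loc}$, i.e. locally absolutely continuous. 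Then at a.e. $t$ the chain rule applies: writing $\tfrac{d}{dt}\mathcal{E}(t,u(t))=\lim_{h\to0}\tfrac{1}{h}[\mathcal{E}(t+h,u(t+h))-\mathcal{E}(t,u(t))]$ and splitting into $[\mathcal{E}(t+h,u(t+h))-\mathcal{E}(t,u(t+h))]+[\mathcal{E}(t,u(t+h))-\mathcal{E}(t,u(t))]$, the first bracket divided by $h$ tends to $\partial_t\mathcal{E}(t,u(t))$ using \eqref{eq:lower-semicon-of-derivative} for the $\liminf$ direction and the assumed upper semicontinuity of $\partial_t\mathcal{E}$ in $u$ for the $\limsup$ direction, while the second is estimated by $\geq -|\partial\mathcal{E}(t)|(u(t))\,|u'|(t)\geq -\tfrac12|\partial\mathcal{E}(t)|^2(u(t))-\tfrac12|u'|^2(t)$ via the metric chain rule in the time-independent variable $u\mapsto\mathcal{E}(t,u)$ (this is the standard chain-rule inequality from \cite{Ambrosio}, valid under \textbf{E5}). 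Integrating the resulting differential inequality
\[
\frac{d}{dt}\mathcal{E}(t,u(t))\leq \partial_t\mathcal{E}(t,u(t))-\frac12|\partial\mathcal{E}(t)|^2(u(t))-\frac12|u'|^2(t)
\]
gives the ``$\leq$'' in \eqref{eq:energy-identity}, and combined with the ``$\geq$'' from the De Giorgi estimate above, the identity follows; the variational inequality \eqref{eq:variational-inequality} is precisely this differential inequality, so $u$ solves \eqref{eq:sist-grad-flow}--\eqref{eq:sist-grad-flow-initial-data} in the sense of Definition \ref{defn:def-solution}.

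The main obstacle I expect is controlling the accumulated effect of the time-dependence when passing from the discrete De Giorgi estimate to its continuous counterpart: the residual terms $\mathscr{R}_{\boldsymbol{\tau}}$, $\mathscr{D}_{\boldsymbol{\tau}}$, $G_{\boldsymbol{\tau\eta}}$ and the error from replacing $\mathcal{T}_{\boldsymbol{\tau}}(t)$ by $t$ must all vanish as $|\boldsymbol{\tau}|\to0$, uniformly in $t\in[0,T]$, and this is where \textbf{E3} (rather than a pointwise bound on $\partial_t\mathcal{E}$) is essential — it converts a difference of energies at two nearby times into $\int\beta(r)\,dr\cdot(1+d^2(u^*,\cdot))$, and the a priori bound $d^2(u^*,U_{\boldsymbol{\tau}}^n)\leq C$ from Lemma \ref{lemma:limit-sec-discret} makes this controllable. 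A secondary delicate point is the passage to the $\liminf$ for the slope: since $|\partial\mathcal{E}(\mathcal{T}_{\boldsymbol{\tau}}(t))|$ is evaluated at the shifted time, one needs the lower semicontinuity of the slope jointly in $(t,u)$, which again uses \textbf{E5} and \eqref{eq:formula-inclinacao} together with continuity of $\lambda$; one argues that for fixed $v$, $\liminf_{\boldsymbol{\tau}}\bigl(\tfrac{\mathcal{E}(\mathcal{T}_{\boldsymbol{\tau}}(t),\widetilde{U}_{\boldsymbol{\tau}}(t))-\mathcal{E}(\mathcal{T}_{\boldsymbol{\tau}}(t),v)}{d(\widetilde{U}_{\boldsymbol{\tau}}(t),v)}+\tfrac12\lambda(\mathcal{T}_{\boldsymbol{\tau}}(t))d(\widetilde{U}_{\boldsymbol{\tau}}(t),v)\bigr)\geq \tfrac{\mathcal{E}(t,u(t))-\mathcal{E}(t,v)}{d(u(t),v)}+\tfrac12\lambda(t)d(u(t),v)$ using \textbf{E3} once more, then takes the supremum over $v$.
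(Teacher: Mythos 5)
Your outline assembles the same ingredients as the paper (discrete energy estimate for the scheme plus Fatou and lower semicontinuity on one side; a.e. differentiation of $t\mapsto\mathcal{E}(t,u(t))$ using \eqref{eq:lower-semicon-of-derivative} on the other; Lemma \ref{lem:cond-cont-absolute} for absolute continuity), but the two halves of \eqref{eq:energy-identity} are attributed to the wrong tools, and as written one of the two derivations does not go through. The discrete De Giorgi estimate (in the paper, Corollary \ref{cor:integral-identity} summed over the partition, i.e. \eqref{eq:teorema-sol-flux-grad}), after Fatou, lower semicontinuity of $\mathcal{E}$ and of the slope via \eqref{eq:est-slope-local}--\eqref{eq:formula-inclinacao}, gives $\frac12\int_0^t|u'|^2+\frac12\int_0^t|\partial\mathcal{E}(r)|^2(u(r))\,dr+\mathcal{E}(t,u(t))\leq\int_0^t\partial_t\mathcal{E}(r,u(r))\,dr+\mathcal{E}(0,u_0)$, i.e. the ``$\leq$'' half, not the ``$\geq$'' you claim; and for this sharp inequality you must keep the term $\int\partial_t\mathcal{E}(r,\widetilde U_{\boldsymbol{\tau}}(r))\,dr$ (which you absorbed into a residual bounded by $\int\beta$ -- enough for the $L^2$ bound on the slope, not for the identity) and pass to the $\limsup$ using the assumed upper semicontinuity of $\partial_t\mathcal{E}(t,\cdot)$, the a.e. convergence of $\widetilde U_{\boldsymbol{\tau}}$ from Proposition \ref{prop:DiGiorgi-interp-converg}, and the \textbf{E3}-type domination $\partial_t\mathcal{E}(r,\widetilde U)\leq\beta(r)(1+d^2(u^*,\widetilde U))$. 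Conversely, the pointwise differentiation you describe yields exactly the paper's final step, $\frac{d}{dt}\mathcal{E}(t,u(t))\geq\partial_t\mathcal{E}(t,u(t))-|\partial\mathcal{E}(t)|(u(t))|u'|(t)\geq\partial_t\mathcal{E}-\frac12|\partial\mathcal{E}|^2-\frac12|u'|^2$, i.e. the ``$\geq$'' half; your announced conclusion $\frac{d}{dt}\mathcal{E}\leq\partial_t\mathcal{E}-\frac12|\partial\mathcal{E}|^2-\frac12|u'|^2$ does not follow from the lower bounds you just stated, and no chain rule can produce it: a chain rule only gives $\frac{d}{dt}\mathcal{E}\leq\partial_t\mathcal{E}+|\partial\mathcal{E}||u'|$ with a plus sign, whereas the inequality with minus signs is precisely the gradient-flow inequality \eqref{eq:variational-inequality}, which has to come from the scheme. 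Swapping the roles repairs the logic and reproduces the paper's argument.

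There is a second gap in the absolute-continuity step: the bound $|\mathcal{E}(t,u(t))-\mathcal{E}(t,u(s))|\leq\int_s^t|\partial\mathcal{E}(r)|(u(r))|u'|(r)\,dr$ that you feed into Young's inequality is itself the chain rule along the curve, i.e. essentially the regularity you are trying to prove, so the argument is circular as stated. What \eqref{eq:formula-inclinacao} gives pointwise is $\mathcal{E}(t,u(t))-\mathcal{E}(t,u(s))\leq d(u(t),u(s))\bigl(|\partial\mathcal{E}(t)|(u(t))+\tfrac{\lambda_T^-}{2}d(u(t),u(s))\bigr)$, and since $d(u(t),u(s))$ is only controlled by $\|u'\|_{L^2}\sqrt{|t-s|}$, this is not of the form $(g(t)+g(s))|t-s|$ required by Lemma \ref{lem:cond-cont-absolute}. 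The paper's fix is the arc-length reparametrization of \cite[Lemma 1.1.4]{Ambrosio}: writing $u(t)=\hat u(\boldsymbol{s}(t))$ with $|\hat u'|\leq1$ and Lipschitz inverse $\boldsymbol{t}$, the function $\varphi(s)=\mathcal{E}(\boldsymbol{t}(s),\hat u(s))$ satisfies the hypotheses of Lemma \ref{lem:cond-cont-absolute} with $g(s)=|\partial\mathcal{E}(\boldsymbol{t}(s))|(\hat u(s))+\lambda_T^-C$ and the \textbf{E3} term $\beta(\boldsymbol{t}(s))\boldsymbol{t}'(s)$, from which absolute continuity of $t\mapsto\mathcal{E}(t,u(t))$ follows; with this device and the swap above, your proposal coincides with the paper's proof.
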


\begin{rem}
\label{remark:solution} Definition \ref{defn:def-solution} does not contain
\eqref{eq:lower-semicon-of-derivative}. Note also that this assumption is used
to prove \eqref{eq:energy-identity} and, in fact, is not necessary to obtain
the absolute continuity of $t\rightarrow\mathcal{E}(t,u(t)).$
\end{rem}

\begin{proof}[Proof of Theorem \ref{theor:sol-flux-grad}.]
Let $T>0$ and denote by
\begin{equation}\label{auxauxdist}
\vert U'_{\boldsymbol{\tau}}\vert(t)=\frac{d(U_{\boldsymbol{\tau}}^{n-1},U_{\boldsymbol{\tau}}^{n})}{\tau_n}
\end{equation}
the discrete derivative of $\overline{U}_{\boldsymbol{\tau}}(t)$ in each interval $(t_{\boldsymbol{\tau}}^{n-1} ,t_{\boldsymbol{\tau}}^{n}]$. By Lemma \ref{lemma:limit-sec-discret}, we have that
\begin{equation}
\int_0^t{\vert U'_{\boldsymbol{\tau}}\vert^2(s)\ ds}\leq C,
\end{equation}
for each $t\in[0,T]$. Thus, we can extract a sequence $\boldsymbol{\tau}_k$ such that $\vert\boldsymbol{\tau}_k\vert\to0$
and $\vert U'_{\boldsymbol{\tau}_k}\vert$ converges weakly in $L^2([0,T])$ for some function $m$. Fix $0\leq s<t\leq T$ and choose $p=p(s)$ and $n=n(s)\in\mathbb{N}$ with $s\in(t_{\boldsymbol{\tau}_k}^{p-1},t_{\boldsymbol
{\tau}_k}^{p}]$ and $t\in(t_{\boldsymbol{\tau}_k}^{n-1},t_{\boldsymbol{\tau}_k}^{n}]$. It follows from \eqref{auxauxdist} and triangular inequality that
\begin{eqnarray*}
d(\overline{U}_{\boldsymbol{\tau}_k}(s),\overline{U}_{\boldsymbol{\tau}_k}(t))&\leq&\int_{t_{\boldsymbol{\tau}_k}^{p-1}}^
{t_{\boldsymbol{\tau}_k}^{n}}{\vert U'_{\boldsymbol{\tau}_k}\vert(r)\ dr}.
\end{eqnarray*}
Letting $k\to+\infty$, and using the weak convergence, we conclude that $u$ is absolutely continuous and $\vert u'\vert\leq m$. Also, after a change of variables, we can employ the identity (\ref{eq:integral-identity}) to obtain
\begin{eqnarray}
\label{eq:teorema-sol-flux-grad}
\frac{1}{2}\int_0^{t_{\boldsymbol{\tau}}^{n}}{\vert U'_{\boldsymbol{\tau}}\vert^2(r)\ dr}+\int_{0}^{t_{\boldsymbol{\tau}}
^{n}}{\frac{d^2(\overline{U}_{\boldsymbol{\tau}}(r),\widetilde{U}_{\boldsymbol{\tau}}(r))}{2r^2}\ dr}
&=&\int_{0}^{t_{\boldsymbol{\tau}}^{n}}{\partial_t\mathcal{E}(r,\widetilde{U}_
{\boldsymbol{\tau}}(r))\ dr}\nonumber\\
&\ &+\mathcal{E}(0,U_{\boldsymbol{\tau}}^0)-\mathcal{E}(t_{\boldsymbol{\tau}}^n,U_{\boldsymbol
{\tau}}^n).
\end{eqnarray}
For the above subsequence, we have $$\mathcal{E}(t,u(t))\leq\displaystyle
\liminf_{k\to\infty}\mathcal{E}(t,\overline{U}_{\boldsymbol{\tau}_k}(t))=\liminf_{k\to\infty}\mathcal{E}
(\mathcal{T}_{\boldsymbol{\tau}_k}(t),\overline{U}_{\boldsymbol{\tau}_k}(t)),$$
and so, using (\ref{eq:est-slope-local}) and (\ref{eq:formula-inclinacao}), we arrive at
\begin{eqnarray}
&\ &\frac{1}{2}\int_0^t\vert u'\vert^2(r)\ dr+\frac{1}{2}\int_0^t\vert \partial \mathcal{E}(r)\vert^2(u(r))\ dr
+\mathcal{E}(t,u(t))\nonumber\\
&\leq&\liminf_{k\to\infty}\Big(\frac{1}{2}\int_0^{\mathcal{T}_{\boldsymbol{\tau}_k}(t)}{\vert U'_{\boldsymbol{\tau}_k}
\vert^2(r)\ dr}+\int_{0}^{\mathcal{T}_{\boldsymbol{\tau}_k}(t)}{\frac{d^2(\overline{U}_{\boldsymbol{\tau}_k}(r),
\widetilde{U}_{\boldsymbol{\tau}_k}(r))}{2r^2}\ dr}\nonumber\\
&\ &\hspace{1cm}+\mathcal{E}(\mathcal{T}_{\boldsymbol{\tau}_k}(t),
\overline{U}_{\boldsymbol{\tau}_k})\Big)\nonumber\\
&\leq&\limsup_{k\to\infty}\int_{0}^{\mathcal{T}_{\boldsymbol{\tau}_k}(t)}{\partial_t\mathcal{E}(r,\widetilde{U}_
{\boldsymbol{\tau}_k}(r))\ dr}+\mathcal{E}(0,u(0))\nonumber\\
&\leq&\int_{0}^{t}{\partial_t\mathcal{E}(r,u(r))\ dr}+\mathcal{E}(0,u_0),\label{eq:curva-inclin-maxim}
\end{eqnarray}
where, by convenience, we have chosen $U_{\boldsymbol{\tau}}^0=u_0$ (recall that $u$ does not depend on $U_{\boldsymbol{\tau}}^0\to u_{0}$). Notice that in particular $\displaystyle\sup_{t\in[0,T]}\mathcal{E}(t,u(t))<\infty$.
On the other hand, in view of \cite[Lemma 1.1.4 a)]{Ambrosio}, there exist an increasing absolutely continuous function $\boldsymbol{s}:[0,T]\to[0,L]$, whose inverse $\boldsymbol{t}$ is Lipschitz, and a curve $\hat{u}:[0,L]\to X$ such that
$\vert\hat{u}'\vert(s)\leq1$ and $u(t)=\hat{u}(\boldsymbol{s}(t))$. Considering the function $\varphi(s)=\mathcal{E}
(\boldsymbol{t}(s),\hat{u}(s))$ and using (\ref{eq:formula-inclinacao}), it follows that
\begin{eqnarray*}
\varphi(s_1)-\varphi(s_2)&\leq&\left(\vert\partial\mathcal{E}(\boldsymbol{t}(s_1))\vert(\hat{u}(s_1))
+\lambda_T^-C\right)\vert s_2-s_1\vert\\
&\ &+(1+C^2)\int_{s_1}^{s_2}{\beta(\boldsymbol{t}(s))\boldsymbol{t}'(s)\ ds},
\end{eqnarray*}
for $s_1<s_2$, where $C=\displaystyle\sup_{s\in[0,L]}d(u^*,\hat{u}(s))$. Replacing the roles of $s_1$ and $s_2$, we obtain
\begin{eqnarray*}
\vert\varphi(s_1)-\varphi(s_2)\vert&\leq& \left(\vert\partial\mathcal{E}(\boldsymbol{t}(s_1))\vert(\hat{u}(s_1))
+\vert\partial\mathcal{E}(\boldsymbol{t}(s_2))\vert(\hat{u}(s_2))+2\lambda_T^-C\right)\vert s_2-s_1\vert\\
&\ &+(1+C^2)\int_{s_1}^{s_2}{\beta(\boldsymbol{t}(s))\boldsymbol{t}'(s)\ ds}.
\end{eqnarray*}
By using Lemma \ref{lem:cond-cont-absolute}, we can conclude that $\varphi$ is absolutely
continuous and then $\mathcal{E}(t,u(t))$ also does so. It follows that $\mathcal{E}(t,u(t))$ is derivable at almost
every point $t\in[0,T]$. Let $t_0\in[0,T]$ be a differentiability point of $\mathcal{E}(t,u(t))$ for which the metric
derivative $\vert u'\vert(t_0)$ exists. Taking $t_n\downarrow t_0$, we get
\begin{eqnarray*}
\frac{d}{dt}\left(\mathcal{E}(t,u(t))\right)\mid_{t=t_0}&\geq&\liminf_{n\to\infty}\frac{\mathcal{E}
(t_n,u(t_n))-\mathcal{E}(t_0,u(t_n))}{t_n-t_0}\\
&\ &+\liminf_{n\to\infty}\frac{\mathcal{E}(t_0,u(t_n))-\mathcal{E}(t_0,u(t_0))}{d(u(t_n),u(t_0))}\frac{d(u(t_n),u(t_0))}{t_n-t_0}\\
&=&\partial_t\mathcal{E}(t_0,u(t_0))-\vert\partial\mathcal{E}(t_0)\vert(u(t_0))\vert u'\vert(t_0).
\end{eqnarray*}
Integrating the above inequality, and using (\ref{eq:curva-inclin-maxim}), we obtain (\ref{eq:energy-identity}).
\end{proof}

\begin{cor}
\label{cor:convergence-qtp} Under the hypotheses of Theorem
\ref{teor:convergence-scheme-discret}. There exists a subsequence of
partitions $\boldsymbol{\tau}_{k}$ such that $\mathcal{E}%
_{\boldsymbol{\tau}_{k}}(t)$ defined in \eqref{eq:functional-interpolation}
converges to $t\rightarrow\mathcal{E}(t,u(t))$ in $L_{loc}^{1}([0,\infty)),$
and therefore a.e. in $\lbrack0,\infty)$ (up to a subsequence), where $u$ is as in
Theorem \ref{teor:convergence-scheme-discret}.
\end{cor}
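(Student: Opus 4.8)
The plan is to fix $T>0$, show that $\mathcal{E}_{\boldsymbol{\tau}_k}\to\mathcal{E}(\cdot,u(\cdot))$ in $L^1([0,T])$ along every sequence of partitions with $|\boldsymbol{\tau}_k|\to 0$, and then obtain the $L^1_{loc}$ statement (and, by a standard subsequence extraction, the a.e.\ one) by a diagonal argument over $T\in\mathbb N$. The two ingredients are a pointwise $\liminf$-inequality and a matching upper bound for the integrals, combined through a Scheffé-type argument. We freely use the a priori bounds of Lemma \ref{lemma:limit-sec-discret} ($d^2(u^*,U^n_{\boldsymbol{\tau}})\le C$ and $\sum_j d^2(U^j_{\boldsymbol{\tau}},U^{j-1}_{\boldsymbol{\tau}})/(2\tau_j)\le C$), the energy bound \eqref{eq:prop-DiGiorgi-conv-aux}, and Lemma \ref{lem:ambrosio-desig} together with \textbf{E4}; these give uniform two-sided bounds $-C'\le\mathcal{E}_{\boldsymbol{\tau}}(t)\le C'$ on $[0,T]$ and, together with the uniform convergence $\overline{U}_{\boldsymbol{\tau}},\underline{U}_{\boldsymbol{\tau}}\to u$ of Theorem \ref{teor:convergence-scheme-discret} and lower semicontinuity (\textbf{E1}), show that $u(t)\in\mathbf D$ for every $t\ge0$ and $\mathcal{E}(\cdot,u(\cdot))\in L^\infty_{loc}([0,\infty))$. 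Recall also from Remark \ref{remark:solution} that $t\mapsto\mathcal{E}(t,u(t))$ is continuous under \textbf{E1}--\textbf{E5} alone.

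\emph{Lower bound.} For $t\in(t^{n-1}_{\boldsymbol{\tau}},t^n_{\boldsymbol{\tau}}]$, \textbf{E3} applied at $U^{n-1}_{\boldsymbol{\tau}},U^n_{\boldsymbol{\tau}}\in\mathbf D$ gives
\[
\mathcal{E}_{\boldsymbol{\tau}}(t)\ge(1-l_{\boldsymbol{\tau}}(t))\,\mathcal{E}(t,\underline{U}_{\boldsymbol{\tau}}(t))+l_{\boldsymbol{\tau}}(t)\,\mathcal{E}(t,\overline{U}_{\boldsymbol{\tau}}(t))-(1+C)\int_{t^{n-1}_{\boldsymbol{\tau}}}^{t^n_{\boldsymbol{\tau}}}\beta(r)\,dr .
\]
For a.e.\ $t$ the last integral tends to $0$, while $\underline{U}_{\boldsymbol{\tau}}(t),\overline{U}_{\boldsymbol{\tau}}(t)\to u(t)$; lower semicontinuity then yields $\liminf_{|\boldsymbol{\tau}|\to0}\mathcal{E}_{\boldsymbol{\tau}}(t)\ge\mathcal{E}(t,u(t))$ for a.e.\ $t\in[0,T]$.

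\emph{Upper bound for the integrals.} First, $\int_0^T\mathcal{E}_{\boldsymbol{\tau}}(t)\,dt$ differs from $\int_0^T\mathcal{E}(\mathcal{T}_{\boldsymbol{\tau}}(t),\overline{U}_{\boldsymbol{\tau}}(t))\,dt$ by at most $|\boldsymbol{\tau}|\sum_n|\mathcal{E}(t^{n-1}_{\boldsymbol{\tau}},U^{n-1}_{\boldsymbol{\tau}})-\mathcal{E}(t^n_{\boldsymbol{\tau}},U^n_{\boldsymbol{\tau}})|$, which tends to $0$ since that sum is bounded uniformly in $\boldsymbol{\tau}$ (its nonnegative part by minimality of $U^n_{\boldsymbol{\tau}}$ and Lemma \ref{lemma:limit-sec-discret}, the rest by \textbf{E3}). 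Next — the crucial point — rather than trying to pass to the limit in a discrete energy identity (not available here: without the extra differentiability hypotheses of Theorem \ref{theor:sol-flux-grad} we have no control on $\partial_t\mathcal{E}$, and \textbf{E3} only bounds $|\mathcal{E}(t,\cdot)-\mathcal{E}(s,\cdot)|$ without sign), we test the discrete variational inequality \eqref{eq:formulation-convex-discret} \emph{directly against the limit curve}: taking $u^t_\tau=U^n_{\boldsymbol{\tau}}$, $u=U^{n-1}_{\boldsymbol{\tau}}$, $\tau=\tau_n$ and $v=u(t^n_{\boldsymbol{\tau}})\in\mathbf D$, dropping $-d^2(U^{n-1}_{\boldsymbol{\tau}},U^n_{\boldsymbol{\tau}})$ and using $|d(U^{n-1}_{\boldsymbol{\tau}},v)-d(U^n_{\boldsymbol{\tau}},v)|\le d(U^{n-1}_{\boldsymbol{\tau}},U^n_{\boldsymbol{\tau}})$, we obtain for $t\in(t^{n-1}_{\boldsymbol{\tau}},t^n_{\boldsymbol{\tau}}]$
\begin{align*}
\mathcal{E}(\mathcal{T}_{\boldsymbol{\tau}}(t),\overline{U}_{\boldsymbol{\tau}}(t))&\le\mathcal{E}(\mathcal{T}_{\boldsymbol{\tau}}(t),u(\mathcal{T}_{\boldsymbol{\tau}}(t)))-\frac{\widetilde{\lambda}_{\boldsymbol{\tau}}(t)}{2}\,d^2(\overline{U}_{\boldsymbol{\tau}}(t),u(\mathcal{T}_{\boldsymbol{\tau}}(t)))\\
&\quad+\frac{1}{2}\,|U'_{\boldsymbol{\tau}}|(t)\Big(d(\underline{U}_{\boldsymbol{\tau}}(t),u(\mathcal{T}_{\boldsymbol{\tau}}(t)))+d(\overline{U}_{\boldsymbol{\tau}}(t),u(\mathcal{T}_{\boldsymbol{\tau}}(t)))\Big),
\end{align*}
with $|U'_{\boldsymbol{\tau}}|(t)=d(\underline{U}_{\boldsymbol{\tau}}(t),\overline{U}_{\boldsymbol{\tau}}(t))/\tau_n$ as in \eqref{auxauxdist}. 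Integrating over $[0,T]$: the first term converges to $\int_0^T\mathcal{E}(t,u(t))\,dt$ because $\mathcal{T}_{\boldsymbol{\tau}}(t)\to t$ uniformly and $s\mapsto\mathcal{E}(s,u(s))$ is continuous; the second tends to $0$ since $\|\widetilde{\lambda}_{\boldsymbol{\tau}}\|_{L^\infty([0,T])}\le\|\lambda\|_{L^\infty([0,T+\tau^*])}$ and $\sup_{[0,T]}d^2(\overline{U}_{\boldsymbol{\tau}},u\circ\mathcal{T}_{\boldsymbol{\tau}})\to0$; and the third is dominated by $\tfrac12\||U'_{\boldsymbol{\tau}}|\|_{L^2([0,T])}$ (uniformly bounded by Lemma \ref{lemma:limit-sec-discret}) times the $L^2([0,T])$-norm of $d(\underline{U}_{\boldsymbol{\tau}}(\cdot),u\circ\mathcal{T}_{\boldsymbol{\tau}})+d(\overline{U}_{\boldsymbol{\tau}}(\cdot),u\circ\mathcal{T}_{\boldsymbol{\tau}})$, which tends to $0$ uniformly (uniform convergence of $\overline{U}_{\boldsymbol{\tau}},\underline{U}_{\boldsymbol{\tau}}$ and uniform continuity of $u$). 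Hence $\limsup_{|\boldsymbol{\tau}|\to0}\int_0^T\mathcal{E}_{\boldsymbol{\tau}}(t)\,dt\le\int_0^T\mathcal{E}(t,u(t))\,dt$. This comparison — in particular taming the apparent $1/\tau_n$ blow-up of $\tfrac1{\tau_n}(d^2(U^{n-1}_{\boldsymbol{\tau}},v)-d^2(U^n_{\boldsymbol{\tau}},v))$ through the Cauchy--Schwarz step just used — is the step I expect to be the main obstacle.

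\emph{Conclusion.} Fix a sequence $\boldsymbol{\tau}_k$ with $|\boldsymbol{\tau}_k|\to0$. Fatou applied to $\mathcal{E}_{\boldsymbol{\tau}_k}(t)+C'\ge0$ together with the lower bound gives $\liminf_k\int_0^T\mathcal{E}_{\boldsymbol{\tau}_k}(t)\,dt\ge\int_0^T\mathcal{E}(t,u(t))\,dt$, so by the upper bound $\int_0^T\mathcal{E}_{\boldsymbol{\tau}_k}(t)\,dt\to\int_0^T\mathcal{E}(t,u(t))\,dt$. Since $(\mathcal{E}_{\boldsymbol{\tau}_k}(t)-\mathcal{E}(t,u(t)))^-\to0$ a.e.\ and is dominated by $|\mathcal{E}(t,u(t))|+C'\in L^1([0,T])$, dominated convergence yields $\int_0^T(\mathcal{E}_{\boldsymbol{\tau}_k}(t)-\mathcal{E}(t,u(t)))^-\,dt\to0$, and therefore $\int_0^T(\mathcal{E}_{\boldsymbol{\tau}_k}(t)-\mathcal{E}(t,u(t)))^+\,dt\to0$ as well; thus $\mathcal{E}_{\boldsymbol{\tau}_k}\to\mathcal{E}(\cdot,u(\cdot))$ in $L^1([0,T])$. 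Letting $T$ run through $\mathbb N$ and extracting a diagonal subsequence gives convergence in $L^1_{loc}([0,\infty))$ and, up to a further subsequence, a.e.\ in $[0,\infty)$.
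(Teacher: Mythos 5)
Your route is genuinely different from the paper's. The paper bounds the total variation on $[0,T]$ of the piecewise constant energies $f_{\boldsymbol{\tau}}(t)=\mathcal{E}(t_{\boldsymbol{\tau}}^{n},U_{\boldsymbol{\tau}}^{n})$ and $g_{\boldsymbol{\tau}}(t)=\mathcal{E}(t_{\boldsymbol{\tau}}^{n-1},U_{\boldsymbol{\tau}}^{n-1})$ uniformly in $\boldsymbol{\tau}$ (by the same minimality-plus-\textbf{E3} estimate you use for your replacement step), extracts an $L^{1}([0,T])$-convergent subsequence by the BV compactness theorem of \cite{Evans}, shows the limits satisfy $A=B\geq\mathcal{E}(t,u(t))$, and then identifies $A=\mathcal{E}(t,u(t))$ by re-running the chain of inequalities \eqref{eq:curva-inclin-maxim} with $A$ in place of $\mathcal{E}(t,u(t))$ and invoking the energy identity \eqref{eq:energy-identity}. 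You instead combine a pointwise $\liminf$ bound (\textbf{E3} plus lower semicontinuity), an integral $\limsup$ bound obtained by testing the discrete inequality \eqref{eq:formulation-convex-discret} directly against the limit curve -- with the Cauchy--Schwarz step against the $L^{2}([0,T])$ bound on $|U_{\boldsymbol{\tau}}^{\prime}|$ from Lemma \ref{lemma:limit-sec-discret} correctly taming the $1/\tau_{n}$ -- and a Scheff\'e argument. This avoids BV compactness and the energy-identity machinery and yields convergence of the whole family, not just of a subsequence, for the $L_{loc}^{1}$ statement.

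The one step that does not stand as written is the claim, attributed to Remark \ref{remark:solution}, that $t\mapsto\mathcal{E}(t,u(t))$ is continuous under \textbf{E1}--\textbf{E5} alone. The remark only says that hypothesis \eqref{eq:lower-semicon-of-derivative} is dispensable for the absolute continuity of $t\mapsto\mathcal{E}(t,u(t))$; that absolute continuity is proved in Theorem \ref{theor:sol-flux-grad} using the additional differentiability assumptions on $t\mapsto\mathcal{E}(t,u)$ (through Corollary \ref{cor:integral-identity} and the slope bound in \eqref{eq:curva-inclin-maxim}), which are not among the hypotheses of Theorem \ref{teor:convergence-scheme-discret} under which you announce the proof. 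You use this continuity precisely where it matters: the term $\int_{0}^{T}\mathcal{E}(\mathcal{T}_{\boldsymbol{\tau}}(t),u(\mathcal{T}_{\boldsymbol{\tau}}(t)))\,dt$ is a Riemann-sum-type expression, and for a merely bounded measurable integrand such sums need not converge to $\int_{0}^{T}\mathcal{E}(t,u(t))\,dt$; lower semicontinuity gives the inequality in the wrong direction here. (The paper's own proof has the same tension, since it invokes \eqref{eq:energy-identity}.) The repair is easy and keeps your argument within \textbf{E1}--\textbf{E5}: test \eqref{eq:formulation-convex-discret} with $v=u(t)$, the integration variable, instead of $v=u(\mathcal{T}_{\boldsymbol{\tau}}(t))$. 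Then \textbf{E3} converts $\mathcal{E}(\mathcal{T}_{\boldsymbol{\tau}}(t),u(t))$ into $\mathcal{E}(t,u(t))$ up to $\int_{t}^{\mathcal{T}_{\boldsymbol{\tau}}(t)}\beta(r)\,dr\,(1+\sup_{[0,T]}d^{2}(u^{*},u))$, whose integral over $[0,T]$ vanishes as $|\boldsymbol{\tau}|\to0$ by absolute continuity of the integral of $\beta$, while your distance estimates go through verbatim because $u$ is uniformly continuous on $[0,T]$ and $\overline{U}_{\boldsymbol{\tau}},\underline{U}_{\boldsymbol{\tau}}\to u$ uniformly; only the local boundedness of $\mathcal{E}(\cdot,u(\cdot))$, which you did establish, is then needed.
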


\begin{proof}
We only need to show that, for $T>0$, the functions $f_{\boldsymbol{\tau}}$ and $g_{\boldsymbol{\tau}}$ defined as
$$f_{\boldsymbol{\tau}}(t):=\mathcal{E}(t_{\boldsymbol{\tau}}^n,U_{\boldsymbol{\tau}}^n), \text{ \ for \ } t\in(t_{\boldsymbol{\tau}}^{n-1},t_{\boldsymbol{\tau}}^{n}],$$
and
$$g_{\boldsymbol{\tau}}(t):=\mathcal{E}(t_{\boldsymbol{\tau}}^{n-1},U_{\boldsymbol{\tau}}^{n-1}), \text{ \ for \ } t\in(t_{\boldsymbol{\tau}}^{n-1},t_{\boldsymbol{\tau}}^{n}],$$
converge to $t\rightarrow\mathcal{E}(t,u(t))$ in $L^1([0,T])$, as $|\boldsymbol{\tau}|\rightarrow 0$. First, note that for each partition $\{0=t_0<t_1<\cdots<t_L=T\}$ of $ [0,T]$, we can bound the variation of $f_{\boldsymbol{\tau}}$ as
\begin{eqnarray}
\sum_{l=1}^{L}{\vert f_{\boldsymbol{\tau}}(t_{l})-f_{\boldsymbol{\tau}}(t_{l-1})\vert}&\leq&\sum_{n=1}^N{\left(
(\mathcal{E}(t_{\boldsymbol{\tau}}^{n},U_{\boldsymbol{\tau}}^{n-1}))-\mathcal{E}
(t_{\boldsymbol{\tau}}^{n},U_{\boldsymbol{\tau}}^{n})\right)}\\
&\ &+C\int_{0}^{T+\tau^*}\beta(s)\ ds,\label{aux-est-4}
\end{eqnarray}
where $C>0$ is a constant independent of $\boldsymbol{\tau}$. By Lemma \ref{lemma:limit-sec-discret}, the summation in the right hand side of (\ref{aux-est-4}) is bounded, and therefore the total variation of $f_{\boldsymbol{\tau}}$ in $[0,T]$ is uniformly bounded. Analogously, the total variation of $g_{\boldsymbol{\tau}}$ in $[0,T]$ is uniformly bounded. It follows from \cite[Chap. 5, Theorem 4]{Evans} that there exist a subsequence $\boldsymbol{\tau}_k$ and functions $A,B\in L^1([0,T])$ such that $f_{\boldsymbol{\tau}_k}\to A$ and $g_{\boldsymbol{\tau}_k}\to B$ in $L^1([0,T])$, as $k\to\infty$. Also, it is not hard to show that $A=B\geq\mathcal{E}(t,u(t))$ a.e in $[0,T]$. Now, the same argument used in the proof of (\ref{eq:energy-identity}) can be used in order to show the equality $A=\mathcal{E}(t,u(t))$ a.e. $t\in[0,T]$. In fact, if $f_{\boldsymbol{\tau}_k}(t)\to A$ in $L^1([0,T])$, then
\begin{eqnarray*}
&\ &\frac{1}{2}\int_0^t\vert u'\vert^2(r)\ dr+\frac{1}{2}\int_0^t\vert \partial \mathcal{E}(r)\vert^2(u(r))\ dr
+A(t)\\
&\leq&\liminf_{k\to\infty}\left(\frac{1}{2}\int_0^{\mathcal{T}_{\boldsymbol{\tau}_k}(t)}{\vert U'_{\boldsymbol{\tau}_k}
\vert^2(r)\ dr}+\int_{0}^{\mathcal{T}_{\boldsymbol{\tau}_k}(t)}{\frac{d^2(\overline{U}_{\boldsymbol{\tau}_k}(r),
\widetilde{U}_{\boldsymbol{\tau}_k}(r))}{2r^2}\ dr}\right.\nonumber\\
&\ &\left.\hspace{1cm}+\mathcal{E}(\mathcal{T}_{\boldsymbol{\tau}_k}(t),
\overline{U}_{\boldsymbol{\tau}_k})\right)\nonumber\\
&\leq&\int_{0}^{t}{\partial_t\mathcal{E}(r,u(r))\ dr}+\mathcal{E}(0,u_0),
\end{eqnarray*}
and, using (\ref{eq:energy-identity}), we are done.
\end{proof}

\begin{rem}
\label{remark:solution-in-localslope} As a consequence, we have that the
solution $u(t)\in\text{Dom}(\vert\partial\mathcal{E}(t)\vert)$ for almost every point $t\in(0,\infty)$.
\end{rem}

\subsection{Contraction property}

\label{subsect:contraction-property}\hspace{0.5cm}Consider the condition

\begin{description}
\item[E6.-] The function $\lambda(t)$ is continuous.
\end{description}

Having at hand the estimates obtained in previous sections, the contraction property holds if we assume \textbf{E6}. Here we only sketch its proof for the reader convenience.

For $\lambda(t)$ continuous, the interpolation $\lambda_{\boldsymbol{\tau}}$
defined in (\ref{eq:lambda-discret}) converges uniformly to $\lambda$, as
$|\boldsymbol{\tau}|\rightarrow0,$ in each bounded interval of $[0,\infty)$.
Recall the following technical lemma \cite[Lemma 23.28]{Villani0}.

\begin{lem}
\label{lem:villani} Let $F=F(t,s)$ be a function $[0,\infty)\times
\lbrack0,\infty)\rightarrow\mathbb{R}$ locally absolutely continuous in the
variable $t$ and uniformly continuous in $s$, and locally absolutely
continuous in $s$ and uniformly in $t$; that is, there exists a nonnegative
$m\in L_{loc}^{1}([0,\infty))$ such that
\[
|F(t,s)-F(t^{\prime},s)|\leq\int_{t^{\prime}}^{t}{m(r)\ dr}\text{ and
}|F(t,s)-F(t,s^{\prime})|\leq\int_{s^{\prime}}^{s}{m(r)\ dr},
\]
where $m$ does not depend on $s$ in the first inequality and on $t$ in the
second one. Then, the function $\delta(t):=F(t,t)$ is locally absolutely
continuous and, for almost every point $t_{0}\in\lbrack0,\infty)$, we have
\begin{equation}
\frac{d}{dt}|_{t=t_{0}}\delta(t)\leq\limsup_{t\uparrow t_{0}}\left(
\frac{F(t_{0},t)-\delta(t_{0})}{t-t_{0}}\right)  +\limsup_{t\downarrow t_{0}%
}\left(  \frac{F(t,t_{0})-\delta(t_{0})}{t-t_{0}}\right)  .
\label{eq:lem-villani}%
\end{equation}

\end{lem}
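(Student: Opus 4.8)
The plan is to handle two things: the local absolute continuity of $\delta$, which is elementary, and then the pointwise inequality for $\delta'$, which I would obtain by combining two one-sided difference quotients of $\delta$, each decomposed through a suitable ``corner'' so as to produce \emph{exactly} the two $\limsup$-terms on the right-hand side, plus a controllable remainder.

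\emph{Local absolute continuity.} For $0<a<b$, applying the two triangle-type estimates of the hypothesis (first move the first variable with second variable held at $a$, then move the second variable with first variable held at $b$),
\[
|\delta(b)-\delta(a)|=|F(b,b)-F(a,a)|\le|F(b,b)-F(b,a)|+|F(b,a)-F(a,a)|\le 2\int_a^b m(r)\,dr .
\]
Since $r\mapsto\int_0^r m$ is locally absolutely continuous, so is $\delta$; hence $\delta'$ exists a.e.\ and $\delta(b)-\delta(a)=\int_a^b\delta'(r)\,dr$.

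\emph{The derivative inequality.} Fix a point $t_0$ at which $\delta$ is differentiable and which is a Lebesgue point of $m$, so that both one-sided difference quotients of $\delta$ at $t_0$ converge to $\delta'(t_0)$. For small $h>0$, write the right increment of $\delta$ through the corner $(t_0+h,t_0)$ and the left increment through the corner $(t_0,t_0-h)$:
\[
\delta(t_0+h)-\delta(t_0)=\bigl[F(t_0+h,t_0+h)-F(t_0+h,t_0)\bigr]+\bigl[F(t_0+h,t_0)-\delta(t_0)\bigr],
\]
\[
\delta(t_0)-\delta(t_0-h)=\bigl[F(t_0,t_0)-F(t_0,t_0-h)\bigr]+\bigl[F(t_0,t_0-h)-F(t_0-h,t_0-h)\bigr].
\]
Divide each by $h$, add, and let $h\downarrow0$: the second bracket of the first line contributes, after taking $\limsup$, exactly $\limsup_{t\downarrow t_0}\frac{F(t,t_0)-\delta(t_0)}{t-t_0}$; the first bracket of the second line contributes $\limsup_{t\uparrow t_0}\frac{F(t_0,t)-\delta(t_0)}{t-t_0}$; and the sum of the two left-hand sides divided by $h$ tends to $2\delta'(t_0)$. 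This yields
\[
2\delta'(t_0)\le\limsup_{h\downarrow0}\frac{S_h}{h}+\limsup_{t\uparrow t_0}\frac{F(t_0,t)-\delta(t_0)}{t-t_0}+\limsup_{t\downarrow t_0}\frac{F(t,t_0)-\delta(t_0)}{t-t_0},
\]
where the cross remainder is $S_h:=\bigl[F(t_0+h,t_0+h)-F(t_0+h,t_0)\bigr]+\bigl[F(t_0,t_0-h)-F(t_0-h,t_0-h)\bigr]$, a sum of increments of $F$ in a single variable evaluated at \emph{shifted} values of the other variable. The proof closes once one shows $\limsup_{h\downarrow0}S_h/h\le\delta'(t_0)$ at a.e.\ $t_0$, since then the first term on the right is moved to the left, leaving $\delta'(t_0)$ bounded by the two desired $\limsup$'s.

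\emph{Main obstacle.} The delicate point is exactly this bound on $S_h$. The crude estimate $|S_h|\le\int_{t_0-h}^{t_0+h}m$ only gives $\limsup_{h\downarrow0}S_h/h\le 2m(t_0)$, which is insufficient; one must exploit that the dominating function $m$ is the \emph{same} for both variables, together with the differentiability of $\delta$ at $t_0$ — for instance, rewriting $S_h$ back in terms of $\delta$-increments and of the two quotients already appearing on the right and carrying out a careful $\liminf/\limsup$ accounting. This is precisely the content of \cite[Lemma 23.28]{Villani0}, on which one may simply rely. An alternative I would consider is to mollify $F$ in both variables, apply the classical chain rule to $\delta_\varepsilon(t)=F_\varepsilon(t,t)$ (so that $\delta_\varepsilon'(t)=\partial_1F_\varepsilon(t,t)+\partial_2F_\varepsilon(t,t)$), and pass to the limit using the uniform $L^1_{loc}$-domination of $\delta_\varepsilon'$ by $2\,m_\varepsilon$; the subtlety there is again to control the off-diagonal behaviour of $\partial_1F$ and $\partial_2F$ near the diagonal.
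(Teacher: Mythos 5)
The paper offers no proof of this lemma at all: it is recalled verbatim from \cite[Lemma 23.28]{Villani0}, so the paper's ``proof'' is precisely that citation. Your fallback of relying on Villani therefore coincides with what the paper does, and your argument for the local absolute continuity of $\delta$ (the two-step triangle estimate giving $|\delta(b)-\delta(a)|\le 2\int_a^b m(r)\,dr$) is the standard one and is correct, as is the identification of the two ``good'' quotients in your corner decomposition with the two $\limsup$'s of the statement.

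Read as a self-contained proof of the differential inequality, however, your argument has a genuine gap, and the way you propose to close it is circular. Everything is fine up to the reduction to showing $\limsup_{h\downarrow0}S_h/h\le\delta'(t_0)$ for a.e.\ $t_0$, but neither your decomposition nor the hypotheses yield this bound: as you observe, the crude estimate only gives $2m(t_0)$, and the $\limsup/\liminf$ bookkeeping on the two remaining quotients only gives $\limsup_{h\downarrow0}S_h/h\le 2\delta'(t_0)-\liminf_h Q_1(h)-\liminf_h Q_2(h)$, where $Q_1,Q_2$ are the quotients whose $\limsup$'s are the two terms of \eqref{eq:lem-villani}; since these quotients need not have limits, this again only produces $2\delta'(t_0)+2m(t_0)$, not $\delta'(t_0)$. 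Moreover, the missing bound on $S_h$ is \emph{not} ``precisely the content of'' \cite[Lemma 23.28]{Villani0}: that lemma is exactly the statement you are trying to prove (inequality \eqref{eq:lem-villani} itself), not an auxiliary estimate on off-diagonal increments at shifted base points, so invoking it at that stage amounts to assuming the conclusion. In short: if your intention is simply to quote Villani, that is legitimate and is what the paper does; if the intention is an independent proof, the key step — controlling the increments $F(t_0+h,t_0+h)-F(t_0+h,t_0)$ and $F(t_0,t_0-h)-F(t_0-h,t_0-h)$ without picking up the extra $2m(t_0)$ — is missing, and your mollification alternative faces the same off-diagonal difficulty, as you yourself note.
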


Integrating (\ref{eq:desig-var-interp}) from $s$ to $t$ with $0\leq s<t\leq T$
and taking the subsequence $\boldsymbol{\tau}_{k}$ given in Corollary
\ref{cor:convergence-qtp}, we can pass the limit and use \textbf{E6} in order
to obtain the inequality
\begin{equation}
\frac{1}{2}d^{2}(u(t),V)-\frac{1}{2}d^{2}(u(s),V)+\int_{s}^{t}{\frac
{\lambda(r)}{2}d^{2}(u(r),V)+\mathcal{E}(r,u(r))\ dr}\leq\int_{s}%
^{t}{\mathcal{E}(r,V)}dr. \label{eq:convex-reform1}%
\end{equation}
Let $u,v$ be two solutions given by Theorem
\ref{teor:convergence-scheme-discret} with initial data $u_{0},v_{0}%
\in\mathbf{D}$, respectively. Recall that, by Lemma
\ref{lemma:limit-sec-discret}, both curves $u$ and $v$ are locally bounded.
Also,
\begin{align*}
|d^{2}(u(t),v(s))-d^{2}(u(t^{\prime}),v(s))|  &  \leq d(u(t),u(t^{\prime
}))(d(u(t),v(s))+d(u(t^{\prime}),v(s)))\\
&  \leq C(T)\int_{t^{\prime}}^{t}{|u^{\prime}|(r)\ dr}.
\end{align*}
where $C(T)=2\displaystyle\sup_{0\leq t\leq T}(d(u(t),u^{\ast})+d(v(t),u^{\ast
}))$. Similarly, one can show the local absolute continuity in the variable
$s$ for the function $F(t,s)=d^{2}(u(t),v(s))$. It follows that $d^{2}%
(u(t),v(s))$ verifies the hypotheses in Lemma \ref{lem:villani}. Next, using
\eqref{eq:convex-reform1}, a direct computation gives
\begin{equation}
\frac{d}{dt}d^{2}(u(t),v(t))+2\lambda(t)d^{2}(u(t),v(t))\leq0,
\label{eq:convex-refom2}%
\end{equation}
for almost every point $t\in\lbrack0,\infty)$, which implies
\begin{equation}
d(u(t),v(t))\leq e^{-\int_{0}^{t}{\lambda(s)\ ds}}d(u_{0},v_{0}).
\label{eq:contraction-property}%
\end{equation}

\begin{rem}
\label{contract-1} The time-dependent functional $\mathcal{E}$ can be
\textquotedblleft weakly\textquotedblright\ convex ($\lambda(t)<0$) at a
certain $t=t_{0}$. In fact, we could have $\int_{0}^{t_{0}}{\lambda(s)\ ds<0}$
and solutions distance themselves. However, according to the behavior of
$\lambda(t)$, the convexity could be improved ($\lambda(t)>0$ and $\int
_{0}^{t}{\lambda(s)\ ds>0}$) as $t$ increases. In this case, we would
recover the time-exponential approximation between the solutions $u$ and $v.$
\end{rem}

\section{Applications for PDEs in the Wasserstein space}

\label{sect:applications-wasserstein}\hspace{0.5cm}In this section we apply
the theory developed in previous ones for time-dependent functionals associated to PDEs in the Wasserstein space. This space has a very nice geometric structure and is suitable to address gradient flow equations.

We start by recalling some definitions and properties of that space. We denote
by $\mathscr{P}_{2}(\mathbb{R}^{d})$ the set of Borel probability measures in
$\mathbb{R}^{d}$ with finite second order moment, i.e. $\mu\in\mathscr{P}_{2}%
(\mathbb{R}^{d})$ if $\mu$ is a positive Borel measure,
\[
\mu(\mathbb{R}^{d})=1\text{ and }M_{2}(\mu):=\int_{\mathbb{R}^{d}}%
{|x|^{2}\ d\mu(x)<\infty}.
\]
We can endow $\mathscr{P}_{2}(\mathbb{R}^{d})$ with the weak-topology or the
so-called narrow topology by considering the following notion of convergence:
\begin{equation}
\mu_{k}\rightharpoonup\mu\text{ as }k\rightarrow\infty\Leftrightarrow
\lim_{k\rightarrow\infty}\int_{\mathbb{R}^{d}}{f(x)}\ d\mu_{k}(x)=\int
_{\mathbb{R}^{d}}{f(x)}\ d\mu(x), \label{eq:narrow-topology}%
\end{equation}
for all $f\in C_{b}^{0}(\mathbb{R}^{d})$, where $C_{b}^{0}(\mathbb{R}^{d})$
stands for the set of bounded continuous functions. On the other hand,
$\mathscr{P}_{2}(\mathbb{R}^{d})$ endowed with the Wasserstein distance is a
complete metric space. This metric is defined by means of the
Monge-Kantorovich problem and reads as%

\begin{equation}
\mathbf{d}_{2}^{2}(\mu,\nu)=\min\left\{  \int_{\mathbb{R}^{d}\times
\mathbb{R}^{d}}{|x-y|^{2}\ d\gamma(x,y)}:\gamma\in\Gamma(\mu,\nu)\right\}  ,
\label{eq:wasserstein-metric}%
\end{equation}
where $\Gamma(\mu,\nu)=\left\{  \gamma\in\mathscr{P}(\mathbb{R}^{d}%
\times\mathbb{R}^{d}):\gamma(A\times\mathbb{R}^{d})=\mu(A),\ \gamma
(\mathbb{R}^{d}\times B)=\nu(B)\right\}  $. In fact, there exists at least one
probability measure in $\mathscr{P}(\mathbb{R}^{d}\times\mathbb{R}^{d})$ that
reaches the minimum in (\ref{eq:wasserstein-metric}). This is called the
optimal transport plane and is supported in the graphic of the subdifferential
of a convex lower semicontinuous function (see \cite{Villani}). We denote by
$\mathscr{P}_{2,ac}(\mathbb{R}^{d})$ the set of probability measures in
$\mathscr{P}_{2}(\mathbb{R}^{d})$ that are absolutely continuous with respect
to the Lebesgue measure. If $\mu$ does not give mass to sets with
Hausdorff-dimension less than $d-1$ (e.g. if $\mu\in\mathscr{P}_{2,ac}%
(\mathbb{R}^{d})$), there exists a map $\mathbf{t}_{\mu}^{\nu}:\mathbb{R}%
^{d}\rightarrow\mathbb{R}^{d}$ that coincides with the gradient of a convex
lower semicontinuous function, such that $\nu=\mathbf{t}_{\mu}^{\nu}\#\mu$ and
the optimal transport plane $\gamma_{0}$ is given by the push-forward of $\mu$
via the map $Id\times\mathbf{t}_{\mu}^{\nu}$, i.e. $\gamma_{0}=(Id\times
\mathbf{t}_{\mu}^{\nu})\#\mu.$ Thus, we have that (see \cite[theorem
2.12]{Villani})
\begin{equation}
\mathbf{d}_{2}^{2}(\mu,\nu)=\int_{\mathbb{R}^{d}}{|x-\mathbf{t}_{\nu}^{\nu
}(x)|^{2}\ d\mu(x)}. \label{eq:wasserstein-metric-monge}%
\end{equation}
We also recall the concept of generalized geodesics \cite{Ambrosio}.

\begin{defn}
\label{def:ganaralized geodesics} Let $\sigma,\mu_{0},\mu_{1}\in
\mathscr{P}_{2}(\mathbb{R}^{d})$ and let $\gamma_{0},\gamma_{1}\in
\mathscr{P}(\mathbb{R}^{d}\times\mathbb{R}^{d})$ be two optimal plans that
reach the minimum in (\ref{eq:wasserstein-metric}) for $d(\sigma,\mu_{0})$ and
$d(\sigma,\mu_{1}),$ respectively. Let $\boldsymbol{\gamma}\in
\mathscr{P}(\mathbb{R}^{d}\times\mathbb{R}^{d}\times\mathbb{R}^{d})$ be a
3-plane such that $P_{1,2}\#\boldsymbol{\gamma}=\gamma_{0}$ and $P_{1,3}%
\#\boldsymbol{\gamma}=\gamma_{1}$ where $P_{i,j}$ denotes the projections on
the coordinates $x_{i}$ and $x_{j}$. A generalized geodesic with base point
$\sigma$ connecting $\mu_{0}$ to $\mu_{1}$ is defined by $\mu_{t}%
=((1-t)P_{2}+tP_{3})\#\boldsymbol{\gamma},$ for $t\in\lbrack0,1]$.
\end{defn}

Although the theory in previous sections can be used to analyze general
functionals in $\mathscr{P}_{2}(\mathbb{R}^{d})$, we shall concentrate our
attention in the following cases:

\textbf{The time-dependent potential energy}
\begin{equation}
\mathcal{V}(t,\mu)=\int_{\mathbb{R}^{d}}{V(t,x)\ d\mu(x)},
\label{eq:potential-energy-td}%
\end{equation}
where $V:[0,\infty)\times\mathbb{R}^{d}\rightarrow\mathbb{R}$ is a
time-dependent potential and \textbf{the time-dependent interaction energy}%

\begin{equation}
\mathcal{W}(t,\mu)=\frac{1}{2}\int_{\mathbb{R}^{d}\times\mathbb{R}^{d}%
}{W(t,x,y)}\ d(\mu\times\mu)(x,y), \label{eq:interaction-energy-td}%
\end{equation}
where $W:[0,+\infty)\times\mathbb{R}^{d}\times\mathbb{R}^{d}\rightarrow
\mathbb{R}$ is an interaction potential. We also are interested in the case of
\textbf{time-dependent diffusion coefficient} in the internal energy functional
\begin{equation}
\mathcal{U}(t,\mu)=\kappa(t)\mathcal{U}(\mu)=\kappa(t)\int_{\mathbb{R}^{d}%
}{\rho\log(\rho)\ dx}, \label{eq:entropy-td}%
\end{equation}
where $\kappa:[0,\infty)\rightarrow(0,\infty)$ and $d\mu=\rho\ dx$ is an
absolutely continuous measure with respect to the Lebesgue one. For singular
measures, we set $\mathcal{U}(t,\mu)=+\infty.$

\begin{rem}
\label{aux-appl-1}The tools developed in previous sections are not directly
applicable for time-dependent $\kappa$. The reason is that the condition
{\bf{E3}} is not satisfied for arbitrary $\kappa$, but only for $\kappa$
constant. So, we postpone the case of $\kappa$ depending on $t$ for later.
\end{rem}

\subsection{The case with constant diffusion}

\label{subsect:constant-diffusion}\hspace{0.5cm}We consider the functionals%

\[
\mathcal{E}_{1}(t,\mu)=\kappa\mathcal{U}(\mu)+\mathcal{V}(t,\mu)
\]
and
\[
\mathcal{E}_{2}(t,\mu)=\mathcal{W}(t,\mu),
\]
where $\mathcal{U}$ is defined in \eqref{eq:entropy-td} and $\kappa\geq0$ is a
constant. In order to apply the theory, we assume some conditions on the potentials.

\begin{description}
\item[V1.-] For each fixed $t\geq0$, $V(t,\cdot)$ is $\lambda(t)-$convex, for
some function $\lambda:[0,\infty)\rightarrow\mathbb{R}$ in $L_{loc}^{\infty
}([0,\infty))$, that is, $V(t,x)-\frac{\lambda(t)}{2}|x|^{2}$ is convex.

\item[V2.-] Let $\partial^{\circ}V(t,x)$ denote the element of minimal norm in
the subdifferential of $V(t,\cdot)$ at the point $x\in\mathbb{R}^{d}$. We
assume that $|\partial^{\circ}V(t,0)|$ is locally bounded and $t\rightarrow
V(t,0)$ is locally bounded from below.

\item[V3.-] There exists a function $\beta\in L_{loc}^{1}([0,+\infty))$ such
that
\begin{equation}
|V(s,x)-V(t,x)|\leq\int_{s}^{t}{\beta(r)\ dr}(1+|x|^{2}),\text{ for }0\leq
s<t. \label{eq:V3}%
\end{equation}

\end{description}

We consider \textbf{V2} for $x=0$ only for simplicity. Indeed, this condition
can be assumed for any (fixed) $x_{0}\in\mathbb{R}^{d}$. Moreover, it is not
necessary to choose the element of minimal norm in the subdifferential. In fact, it would be enough to make a measurable choice (in $t$) in the subdifferential.

We start with the following result.

\begin{prop}
\label{prop:Hip-V} Assume the hypotheses \textbf{V1} to \textbf{V3}. If there
exists $\mu\in\mathscr{P}_{2}(\mathbb{R}^{d})$ such that $d\mu=\rho dx$,
\begin{equation}
\int_{\mathbb{R}^{d}}{\rho\log(\rho)\ dx}<\infty,\text{ and }\int
_{\mathbb{R}^{d}}{V(0,x)\ d\mu(x)}<\infty, \label{eq:prop-hip-V-aux}%
\end{equation}
then the functional $\mathcal{E}_{1}$ satisfies \textbf{E1} to \textbf{E5}.
\end{prop}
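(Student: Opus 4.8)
The plan is to verify the five conditions one at a time, the point being to translate \textbf{V1}--\textbf{V3} into the abstract hypotheses of Section \ref{set:preliminaries} with $X=\mathscr{P}_2(\mathbb{R}^d)$, $d=\mathbf{d}_2$, and distinguished point $u^*=\delta_0$, for which $d^2(\mu,u^*)=\mathbf{d}_2^2(\mu,\delta_0)=M_2(\mu)$. First I would record two lower bounds used throughout. Comparing $\rho\log\rho$ with $\rho\log g$ for $g$ the standard Gaussian density gives $\mathcal{U}(\mu)\ge -c_d\bigl(1+M_2(\mu)\bigr)$ for all $\mu\in\mathscr{P}_2(\mathbb{R}^d)$, the relative entropy of $\mu$ with respect to $g$ being nonnegative; and from \textbf{V1}--\textbf{V2}, for $t$ in a bounded interval $[0,T]$ one has $V(t,x)\ge V(t,0)+\langle\partial^{\circ}V(t,0),x\rangle+\tfrac{\lambda(t)}{2}|x|^2\ge -C_T(1+|x|^2)$, hence $\mathcal{V}(t,\mu)\ge -C_T(1+M_2(\mu))$ and $\mathcal{E}_1(t,\mu)\ge -C_T(1+M_2(\mu))$ on $[0,T]\times\mathscr{P}_2(\mathbb{R}^d)$.

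For \textbf{E1}: properness at $t=0$ is exactly \eqref{eq:prop-hip-V-aux}, and for general $t$ the bound \textbf{V3} gives $\mathcal{V}(t,\mu)\le\mathcal{V}(0,\mu)+\int_0^t\beta(r)\,dr\,(1+M_2(\mu))<\infty$ for the same $\mu$, while the displayed lower bound shows $\mathcal{E}_1(t,\cdot)$ is never $-\infty$. Lower semicontinuity is taken with respect to $\mathbf{d}_2$-convergence, which implies narrow convergence together with convergence of second moments; writing $\mathcal{U}$ as (Gaussian relative entropy, narrowly l.s.c.) $-\,c_d-\tfrac12 M_2$ and $V(t,x)=[V(t,x)+C_T(1+|x|^2)]-C_T(1+|x|^2)$ as an l.s.c.\ nonnegative part minus a $\mathbf{d}_2$-continuous part, both $\mathcal{U}$ and $\mathcal{V}(t,\cdot)$, hence $\mathcal{E}_1(t,\cdot)$, are l.s.c.\ (cf.\ \cite{Ambrosio}). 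For \textbf{E2}, every $\mu\in\mathscr{P}_2(\mathbb{R}^d)$ has finite second moment, so \textbf{V3} yields $|\mathcal{V}(t,\mu)-\mathcal{V}(0,\mu)|<\infty$; thus $\mathbf{D}=\{\mu\in\mathscr{P}_{2,ac}(\mathbb{R}^d):\mathcal{U}(\mu)<\infty,\ \mathcal{V}(0,\mu)<\infty\}$ (the entropy constraint being dropped when $\kappa=0$) is independent of $t$ and nonempty by hypothesis. For \textbf{E3}, on $\mathbf{D}$ the entropy term cancels and $\mathcal{E}_1(t,\mu)-\mathcal{E}_1(s,\mu)=\int_{\mathbb{R}^d}\bigl(V(t,x)-V(s,x)\bigr)\,d\mu(x)$, so \textbf{V3} gives $|\mathcal{E}_1(t,\mu)-\mathcal{E}_1(s,\mu)|\le\int_s^t\beta(r)\,dr\,(1+M_2(\mu))=\int_s^t\beta(r)\,dr\,(1+d^2(\mu,\delta_0))$, which is \eqref{eq:E3-hypothesis} with $u^*=\delta_0$ and the same $\beta$. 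For \textbf{E4}, using $\mathcal{E}_1(t,v)\ge -C_T(1+M_2(v))$ and $\mathbf{d}_2^2(\delta_0,v)=M_2(v)$, one has $\mathbf{E}(t,\tau,\delta_0;v)\ge -C_T+\bigl(\tfrac1{2\tau}-C_T\bigr)M_2(v)$, so choosing $\tau^*(T)$ with $\tfrac1{2\tau^*}>C_T$ (and small enough as in Remark \ref{Rem-aux-1}) makes $\mathscr{E}_{t,\tau^*}(\delta_0)\ge -C_T$ for all $t\in[0,T]$.

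The substantive point is \textbf{E5}. Given $u,v_0,v_1\in\mathscr{P}_2(\mathbb{R}^d)$, I would take $\gamma$ to be a generalized geodesic $(\mu_s)_{s\in[0,1]}$ from $v_0$ to $v_1$ with base point $u$ (Definition \ref{def:ganaralized geodesics}), induced by a $3$-plan $\boldsymbol{\gamma}$; such curves exist for all triples, matching Remark \ref{rem:aux-2}. If $v_0\notin\mathbf{D}$ or $v_1\notin\mathbf{D}$, then $\mathcal{E}_1(t,v_i)=+\infty$ (by \textbf{V3} again for the potential part) and \eqref{eq:convexity} is trivial. If $v_0,v_1\in\mathbf{D}$, I would add the three convexity-along-generalized-geodesics inequalities from \cite[Ch.~9]{Ambrosio}: $\kappa\mathcal{U}$ is convex (since $z\mapsto z\log z$ satisfies McCann's condition), $\mathcal{V}(t,\cdot)$ is $\lambda(t)$-convex (as $V(t,\cdot)$ is $\lambda(t)$-convex by \textbf{V1}), and $v\mapsto\tfrac1{2\tau}\mathbf{d}_2^2(u,v)$ is $\tfrac1\tau$-convex along generalized geodesics based at $u$; all three hold simultaneously along the single geodesic $\mu_s$, the residual quadratic term of the last two being $\int|x_2-x_3|^2\,d\boldsymbol{\gamma}\ge\mathbf{d}_2^2(v_0,v_1)$. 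Summing,
\[
\mathbf{E}(t,\tau,u;\mu_s)\le(1-s)\mathbf{E}(t,\tau,u;v_0)+s\,\mathbf{E}(t,\tau,u;v_1)-\frac{1+\tau\lambda(t)}{2\tau}\,s(1-s)\int|x_2-x_3|^2\,d\boldsymbol{\gamma},
\]
and since $0<\tau<1/\lambda_T^-$ forces $1+\tau\lambda(t)>0$ on $[0,T]$, replacing $\int|x_2-x_3|^2\,d\boldsymbol{\gamma}$ by the smaller quantity $\mathbf{d}_2^2(v_0,v_1)$ only increases the right-hand side and yields \eqref{eq:convexity}.

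I expect the main obstacles to be bookkeeping rather than conceptual: ensuring a single generalized geodesic simultaneously realizes the convexity of the entropy, of the \emph{time-dependent} $\lambda(t)$-convex potential energy, and of the squared Wasserstein distance, and then fixing the sign of the residual term through the constraint $\tau<1/\lambda_T^-$; and, in \textbf{E1}, obtaining lower semicontinuity of $\mathcal{V}(t,\cdot)$ (and of $\mathcal{U}$) from only a quadratic lower bound — which is precisely why one must work with $\mathbf{d}_2$-convergence, upgrading narrow convergence with convergence of second moments, rather than with narrow convergence alone.
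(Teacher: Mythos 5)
Your proposal is correct and follows essentially the same route as the paper's proof: the quadratic lower bound on $V(t,\cdot)$ from the subgradient inequality at $x=0$ together with \textbf{V2}, lower semicontinuity of $\mathcal{U}$ and $\mathcal{V}(t,\cdot)$ for \textbf{E1}, \textbf{V3} with $u^{*}=\delta_{0}$ for \textbf{E2}--\textbf{E3}, convexity along generalized geodesics with base point $u$ (entropy, $\lambda(t)$-convex potential, and the $2$-convexity of $\mathbf{d}_2^2(u,\cdot)$) for \textbf{E5}, and a smallness choice of $\tau^{*}(T)$ for \textbf{E4}. The only deviation is that you bound the entropy from below by comparison with the Gaussian (linear in $M_{2}(\mu)$) instead of the sublinear estimate \eqref{eq:otto-estimate} used in the paper; this merely makes $\tau^{*}(T)$ depend also on $\kappa$ and the dimensional constant, which is harmless since \textbf{E4} only asks for some $\tau^{*}(T)>0$.
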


\begin{proof}
Taking $s=0$ in {\bf V3}, it follows that
\begin{eqnarray}
\label{eq:prop-hip-V-0}
\vert V(t,x)-V(0,x)\vert&\leq&\int_0^{t}{\beta(s)\ ds}(1+\vert x\vert^2).
\end{eqnarray}
Next, we show the estimate
\begin{equation}
\label{eq:hip-V-1}
V(t,x)\geq -A(t)-B_T\vert x\vert^2
\end{equation}
where $A(t)=-V(t,0)+\frac{1}{2}\vert\partial^{\circ}V(t,0)\vert^2$ and  $B_T=\frac{1}{2}(1+\lambda_T^-)$ for all $t
\in[0,T]$.
In fact, by the definition of subdifferential, we have
\begin{eqnarray*}
V(t,x)&\geq& V(t,0)+\langle \partial^{\circ}V(t,0),x\rangle -\frac{\lambda_T^-}{2}\vert x\vert^2\\
&\geq& V(t,0)-\frac{1}{2}\vert \partial^{\circ}V(t,0)\vert^2-\frac{1}{2}(1+\lambda_T^-)\vert x\vert^2,
\end{eqnarray*}
and so \eqref{eq:hip-V-1} follows. This estimate implies that the functional $\mathcal{V}$ is
lower semicontinuous with respect to the Wasserstein metric, for each fixed $t\geq0$. Since the internal energy functional
$\mathcal{U}$ is also lower semicontinuous (see \cite{Villani}), we obtain {\bf E1}. Using \eqref{eq:prop-hip-V-0}, the second condition in (\ref{eq:prop-hip-V-aux}), and \eqref{eq:hip-V-1}, it follows that $\text{Dom}(\mathcal{V}(t,\cdot))$ is nonempty and time-independent, which gives {\bf E2}. The property {\bf E3} is a direct consequence of {\bf V3} by taking $u^*=\delta_0\in\mathscr{P}_2(\mathbb{R}^d)$. {\bf E5} follows by using the convexity of the function $\mu\to\mathbf{d}_2^2(\sigma,\mu)$ along generalized geodesics with base point $\sigma$ and the convexity of
the potential $V(t,\cdot)$.
Next, we turn to {\bf E4}. Recall the estimate \cite{Jordan-Otto}
\begin{equation}
\label{eq:otto-estimate}
\int_{\mathbb{R}^d}{\rho\log(\rho)\ dx}\geq -C(1+M_2(\mu))^{\alpha},
\end{equation}
where $\alpha\in(0,1)$ and $C>0$ are constants depending only on the dimension $d$, and $d\mu=\rho dx\in\mathscr{P}_{2,ac}(\mathbb{R}^d)$. Thus, we obtain from (\ref{eq:hip-V-1}) that
\begin{eqnarray*}
\frac{\mathbf{d}^2_2(\delta_0,\mu)}{2\tau^*}+\mathcal{E}_1(t,\mu)&\geq& \frac{1}{2\tau^*}M_2(\mu)-\kappa C(1+M_2(\mu)
)^{\alpha}-A(t)-BM_2(\mu)\\
&=&\left(\frac{1}{2\tau^*}-B\right)M_{2}(\mu)-\kappa C(1+M_2(\mu))^{\alpha}-A(t).
\end{eqnarray*}
Choosing $\tau^*(T)>0$ such that $\frac{1}{\tau^*(T)}>1+\lambda_T^-$, and using {\bf V2}, the last expression is bounded from below
by a constant depending on $\alpha,\kappa,d,\lambda_T^-,\tau^*, T$, and so {\bf E4} follows.
\end{proof}

In view of the hypotheses in Theorem \ref{theor:sol-flux-grad}, we need to
impose one more condition on $V$ in order to obtain the needed regularity for
the functional $\mathcal{V}$, as expected.

\begin{lem}
\label{lem:t-diff-V} Let $\kappa\geq0$ and $\mathbf{D_{1}}=\mbox{Dom}(\mathcal{E}_{1})$. If, in addition to \textbf{V1, V2 }and\textbf{ V3}, we
assume that $t\rightarrow V(t,x)$ is differentiable for each $x\in
\mathbb{R}^{d}$, then the function $t\rightarrow\mathcal{V}(t,\mu)$ is
differentiable for each $\mu\in\mathbf{D_{1}.}$ Moreover, for each sequence
$t_{n}\rightarrow t$ and $\mathbf{d}_{2}(\mu_{n},\mu)\rightarrow0,$ we have
that
\begin{equation}
\lim_{n\rightarrow\infty}\frac{\mathcal{V}(t_{n},\mu_{n})-\mathcal{V}%
(t,\mu_{n})}{t_{n}-t}=\int_{\mathbb{R}^{d}}{\frac{\partial}{\partial
t}V(t,x)\ d\mu(x).} \label{eq:deriv-V}%
\end{equation}

\end{lem}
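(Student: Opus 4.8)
The plan is to prove both assertions by differentiating under the integral sign, the pointwise control being supplied by the added hypothesis and the domination by \textbf{V3}, and then to upgrade to the joint limit $(t_{n},\mu_{n})\to(t,\mu)$ by exploiting that $\mathbf{d}_{2}$-convergence is equivalent to narrow convergence together with convergence of second moments, so that continuous integrands of at most quadratic growth pass to the limit against $\mu_n$.

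First I would fix $\mu\in\mathbf{D_{1}}$ and $t\ge 0$ and write, for small $h\neq 0$, the quotient $\tfrac{\mathcal{V}(t+h,\mu)-\mathcal{V}(t,\mu)}{h}=\int_{\mathbb{R}^{d}}\tfrac{V(t+h,x)-V(t,x)}{h}\,d\mu(x)$. By hypothesis the integrand converges pointwise to $\partial_{t}V(t,x)$, and by \textbf{V3} one has $\bigl|\tfrac{V(t+h,x)-V(t,x)}{h}\bigr|\le c_{h}\,(1+|x|^{2})$ with $c_{h}:=\tfrac{1}{|h|}\bigl|\int_{t}^{t+h}\beta(r)\,dr\bigr|$, the prefactor $c_{h}$ being bounded as $h\to 0$ (it tends to $\beta(t)$ at each Lebesgue point of $\beta$, hence for a.e.\ $t$; for a general $t$ one truncates in $x$ and uses the pointwise differentiability of $V(\cdot,x)$ on each ball). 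Since $\int_{\mathbb{R}^{d}}(1+|x|^{2})\,d\mu=1+M_{2}(\mu)<\infty$, dominated convergence gives the differentiability of $t\mapsto\mathcal{V}(t,\mu)$ together with $\tfrac{d}{dt}\mathcal{V}(t,\mu)=\int_{\mathbb{R}^{d}}\partial_{t}V(t,x)\,d\mu(x)$. The same bound yields $|\partial_{t}V(t,x)|\le\beta(t)(1+|x|^{2})$ for a.e.\ $t$, so $\partial_{t}V(t,\cdot)$ has at most quadratic growth; moreover, using \textbf{V1}--\textbf{V2}, the difference quotients $x\mapsto\tfrac{V(t+h,x)-V(t,x)}{h}$ are uniformly controlled differences of convex functions (up to the harmless quadratic term $\tfrac{\lambda(t+h)-\lambda(t)}{2h}|x|^{2}$), which lets one check that $\partial_{t}V(t,\cdot)$ is continuous and that the difference quotients converge uniformly on compact sets.

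For the \textbf{moreover} part, put $h_{n}:=t_{n}-t\to 0$ and $q_{n}(x):=\tfrac{V(t_{n},x)-V(t,x)}{h_{n}}$, so the left-hand side of (\ref{eq:deriv-V}) equals $\int_{\mathbb{R}^{d}}q_{n}\,d\mu_{n}$, and I would split $\int q_{n}\,d\mu_{n}=\int\bigl(q_{n}-\partial_{t}V(t,\cdot)\bigr)\,d\mu_{n}+\int\partial_{t}V(t,\cdot)\,d\mu_{n}$. The second term tends to $\int\partial_{t}V(t,x)\,d\mu(x)$ because $\mu_{n}\to\mu$ in $(\mathscr{P}_{2},\mathbf{d}_{2})$ and $\partial_{t}V(t,\cdot)$ is continuous with at most quadratic growth. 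For the first term one uses that $q_{n}-\partial_{t}V(t,\cdot)\to 0$ pointwise and, by the first step, locally uniformly, that $|q_{n}-\partial_{t}V(t,\cdot)|\le C(1+|x|^{2})$ with $C$ independent of $n$ for $n$ large, and that $|x|^{2}$ is uniformly integrable with respect to $\{\mu_{n}\}$ (a consequence of $\mathbf{d}_{2}$-convergence): choosing a large ball that uniformly absorbs the tails of all $\mu_{n}$ and invoking the uniform convergence on that ball, this term tends to $0$. Combining the two contributions yields (\ref{eq:deriv-V}).

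The step I expect to be the main obstacle is this last passage to the limit, i.e.\ the need for the difference quotients $q_{n}$ to converge \emph{uniformly on compact sets} (equivalently, that the $C(\overline{B_{R}})$-valued curve $s\mapsto V(s,\cdot)|_{\overline{B_{R}}}$ be differentiable at $t$ in norm, not merely pointwise, with continuous derivative): mere pointwise convergence plus a uniform quadratic bound is not enough once the reference measure $\mu_{n}$ itself moves, since the mass of $\mu_{n}$ could concentrate on a shrinking region where $q_{n}-\partial_{t}V(t,\cdot)$ is still large. This is precisely where \textbf{V1} (the $\lambda(t)$-convexity) and \textbf{V2} (local boundedness of the minimal element of $\partial V(t,\cdot)$) enter: they force the $q_{n}$ to be equi-controlled differences of convex functions, so that a Rockafellar/Arzel\`a--Ascoli compactness argument produces the local uniform convergence and, automatically, the continuity of $\partial_{t}V(t,\cdot)$. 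The remaining ingredients---dominated convergence and the characterization of $\mathbf{d}_{2}$-convergence via narrow convergence plus second moments---are routine.
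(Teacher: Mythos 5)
Your overall decomposition (split $\int q_n\,d\mu_n$ into $\int(q_n-\partial_tV(t,\cdot))\,d\mu_n+\int\partial_tV(t,\cdot)\,d\mu_n$ and use that $\mathbf{d}_2$-convergence is narrow convergence plus convergence of second moments) is reasonable, but the step you yourself flag as the crux is a genuine gap, and the mechanism you propose to close it does not work. Rockafellar-type local Lipschitz estimates control a convex function by its own local sup bound; here the two convex pieces are $\frac{1}{h_n}\bigl(V(t_n,\cdot)-\tfrac{\lambda(t_n)}{2}|\cdot|^2\bigr)$ and $\frac{1}{h_n}\bigl(V(t,\cdot)-\tfrac{\lambda(t)}{2}|\cdot|^2\bigr)$, whose local sup norms are of order $1/|h_n|$, and the uniform bound $|q_n|\le C(1+|x|^2)$ on their difference gives no equi-Lipschitz property and no Arzel\`a--Ascoli compactness: a difference of two convex functions can be uniformly small yet oscillate with unbounded Lipschitz constant. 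Concretely, take disjoint bumps $\psi_j(x)=\psi\bigl((x-y_j)/\sqrt{s_j}\bigr)$ centered at $y_j\to y_0$ with $s_j\downarrow0$, and $V(t,x)=\sum_j s_j\,\phi(t/s_j)\,\psi_j(x)$ with $\psi,\phi$ smooth and bounded, $\psi(0)=1$, $\phi(0)=\phi'(0)=0$, $\phi(1)=1$: then \textbf{V1}--\textbf{V3} hold with constant $\lambda$ and constant $\beta$, $t\mapsto V(t,x)$ is differentiable at every $t$ for every $x$, and $\partial_tV(0,\cdot)\equiv0$, yet $q_{s_n}(y_n)=1$ for all $n$, so the quotients do not converge uniformly on any ball containing the $y_j$; taking instead $\phi'(0)=1$ makes $x\mapsto\partial_tV(0,x)$ discontinuous at $y_0$, so the continuity of $\partial_tV(t,\cdot)$ needed for your second term is also unavailable from the hypotheses. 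Moreover, already for $\kappa=0$ (allowed in the statement, so that Dirac masses lie in $\mathbf{D_1}$), choosing $\mu_n=\delta_{y_n}$, $\mu=\delta_{y_0}$, $t_n=s_n$ in this example makes the left-hand side of \eqref{eq:deriv-V} equal to $1$ while the right-hand side is $0$; hence no soft argument based only on \textbf{V1}--\textbf{V3}, pointwise $t$-differentiability and $\mathbf{d}_2$-convergence can fill the gap, and some joint regularity of the difference quotients in $(t,x)$ has to be brought in. A secondary point: your domination constant $c_h$ is bounded only at Lebesgue points of $\beta$, and truncating in $x$ does not restore domination on the ball, so as written the first part gives differentiability of $t\mapsto\mathcal{V}(t,\mu)$ only for a.e.\ $t$.

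For comparison, the paper argues along a different route: it fixes a reference measure $\sigma\in\mathscr{P}_{2,ac}(\mathbb{R}^d)$, rewrites $\mathcal{V}(t_n,\mu_n)-\mathcal{V}(t,\mu_n)$ as $\int\bigl(V(t_n,\mathbf{t}_{\sigma}^{\mu_n}(x))-V(t,\mathbf{t}_{\sigma}^{\mu_n}(x))\bigr)d\sigma(x)$ via the optimal maps, and then uses the $\sigma$-a.e.\ stability $\mathbf{t}_{\sigma}^{\mu_n}\to\mathbf{t}_{\sigma}^{\mu}$ together with a dominated convergence argument; this replaces your requirement of uniform-in-$x$ convergence of the quotients by an a.e.\ convergence statement along the transported points (it implicitly uses continuity of the difference quotients at those moving points, which is exactly the kind of extra $(t,x)$-regularity your argument would also need). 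If you wish to keep your decomposition, you should add an explicit hypothesis of local uniform $t$-differentiability of $V$ (equivalently, norm-differentiability of $s\mapsto V(s,\cdot)$ in $C(\overline{B_R})$ with continuous derivative), under which both of your limit passages are indeed routine.
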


\begin{proof}
We take $\sigma\in\mathscr{P}_{2,ac}(\mathbb{R}^d)$ and the maps $\mathbf{t}_{\sigma}^{\mu_n}$ and $\mathbf{t}_{\sigma}^{\mu}$ that realize the optimal transports from $\sigma$ to $\mu_n$ and from $\sigma$ to $\mu$, respectively. Then,
\begin{eqnarray}
\frac{\mathcal{V}(t_n,\mu_n)-\mathcal{V}(t,\mu_n)}{t_n-t}=\int_{\mathbb{R}^d}{\frac{V(t_n,\mathbf{t}_{\sigma}^{\mu_n})
-V(t,\mathbf{t}_{\sigma}^{\mu_n})}{t_n-t}\ d\sigma(x)}. \label{aux-est-5}
\end{eqnarray}
By \cite[pag. 71]{Villani}, we have that $\mathbf{t}_{\sigma}^{\mu_n}(x)\to\mathbf{t}_{\sigma}^{\mu}(x)$ a.e. in $\mathbb{R}^d$ with respect to $\sigma$. Using a version of the dominated convergence theorem, we can take
the limit in (\ref{aux-est-5}), as $n\to\infty$, and obtain (\ref{eq:deriv-V}).
\end{proof}

The metric space $\mathscr{P}_{2}(\mathbb{R}^{d})$ and functionals addressed
here present more structure than those in previous sections, where an abstract
theory has been developed. So, it is natural to wonder if gradient flow
solutions as in Definition \ref{defn:def-solution} is related to other senses
of solutions in$\mathscr{P}_{2}(\mathbb{R}^{d}).$ In this direction, we show
that the solution $u$ associated to the functional $\mathcal{E}_{1}$ is in fact
a distributional solution for the Fokker-Planck equation. In the next result, we state precisely
this fact and give some properties for $u$.

\begin{teor}
\label{teor:Fokk-Planck-eq} Consider the functional $\mathcal{E}_{1}$ with
$\kappa\geq0$ and potential $V$ satisfying the assumptions \textbf{V1} to
\textbf{V3} and the differentiability condition in Lemma \ref{lem:t-diff-V}.
Then, given $\mu_{0}\in\mathscr{P}_{2}(\mathbb{R}^{d})$, the curve
$\mu:[0,\infty)\rightarrow\mathscr{P}_{2}(\mathbb{R}^{d})$ given in Theorem
\ref{teor:convergence-scheme-discret} is a distributional solution for the
Fokker-Planck equation
\begin{equation}
\partial_{t}\rho=\kappa\Delta\rho+\nabla\cdot(\nabla V(t,x)\rho),
\label{eq:fokker-planck-1}%
\end{equation}
with $\lim_{t\rightarrow0^{+}}\mu(t)=\mu_{0}$ weakly as measure. If $\kappa
>0$, such curve is absolutely continuous with respect to the Lebesgue measure,
i.e. $d\mu_{t}(x)=\rho(t,x)dx$, and $\rho(t,\cdot)\in W_{loc}^{1,1}(\mathbb{R}^{d})$. Also, $\mu$ satisfies the energy identity
\begin{equation}
\mathcal{E}_{1}(s,\mu(s))=\mathcal{E}_{1}(t,\mu(t))+\int_{s}^{t}%
{\int_{\mathbb{R}^{d}}{(|\Psi_{1}(r,t)|^{2}-\partial_{t}V(r,x))\ d\mu_{r}%
(x)}\ dr} \label{eq:energy-fokker-planc}%
\end{equation}
for $s<t$, where $\Psi_{1}:[0,\infty)\times\mathbb{R}^{d}\rightarrow
\mathbb{R}^{d}$ is a vector field satisfying the identity
\begin{equation}
\rho(t,x)\Psi_{1}(t,x)=\kappa\nabla\rho(t,x)+\rho(t,x)\nabla_{x}V(t,x),\; for\;\kappa>0,
\label{eq:fokker-planck-vector-field}%
\end{equation}
and $\Psi_{1}=\partial^{\circ}V(t,x)$ for $\kappa=0$. Moreover, if the function $\lambda$ satisfies {\bf{E6}}, and $\mu_{1}%
,\mu_{2}$ are two solutions, we have the contraction property
\begin{equation}
\mathbf{d}_{2}(\mu_{1}(t),\mu_{2}(t))\leq e^{-\int_{0}^{t}\lambda
(s)\ ds}\mathbf{d}_{2}(\mu_{0},\mu_{1}). \label{eq:fokker-planck-contraction}%
\end{equation}

\end{teor}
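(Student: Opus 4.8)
The plan is to deduce everything from the abstract machinery of Sections~\ref{sect:construc-and-propert}--\ref{sect:regularity} together with the Wasserstein differential calculus of \cite{Ambrosio} for internal and potential energies, the time-dependence of $V$ being handled as a lower-order perturbation at each frozen time. First I would check that $\mathcal{E}_{1}$ meets all standing hypotheses: Proposition~\ref{prop:Hip-V} gives \textbf{E1}--\textbf{E5}, Lemma~\ref{lem:t-diff-V} gives $t$-differentiability of $\mathcal{V}(t,\mu)$ together with \eqref{eq:deriv-V}, and the latter also yields \eqref{eq:lower-semicon-of-derivative} for $\mathcal{E}_{1}$ (upper and lower semicontinuity of the $t$-derivative in $\mu$ follow from \eqref{eq:deriv-V} applied along $\mathbf{t}_{\sigma}^{\mu_{n}}\to\mathbf{t}_{\sigma}^{\mu}$). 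Hence Theorem~\ref{teor:convergence-scheme-discret} produces the curve $\mu$ with $\mu(0)=\mu_{0}$ (Remark~\ref{remark:convergence-in-Dbar} extends admissibility to $\mu_{0}\in\overline{\mathbf{D}}$, which here is all of $\mathscr{P}_{2}(\mathbb{R}^{d})$), Theorem~\ref{theor:sol-flux-grad} gives its local absolute continuity, $|\mu'|\in L_{loc}^{2}$, and the energy identity \eqref{eq:energy-identity}, and Remark~\ref{remark:solution-in-localslope} gives $\mu(t)\in\text{Dom}(|\partial\mathcal{E}_{1}(t)|)$ for a.e.\ $t$. Narrow continuity at $t=0^{+}$ is immediate since $\mathbf{d}_{2}$-convergence implies narrow convergence.

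The crux is the spatial identification of the slope and of the velocity of the curve. For each fixed $t$ the functional $\mu\mapsto\kappa\mathcal{U}(\mu)+\mathcal{V}(t,\mu)$ is an Ambrosio--Gigli--Savar\'{e} internal-plus-potential energy with $\lambda(t)$-convex potential, so by \cite[Ch.~10]{Ambrosio} its slope is finite exactly when, for $\kappa>0$, $\mu=\rho\,dx$ with $\rho\in W_{loc}^{1,1}(\mathbb{R}^{d})$ and $\kappa\nabla\rho/\rho+\nabla_{x}V(t,\cdot)\in L^{2}(\mu)$, and then
\[
|\partial\mathcal{E}_{1}(t)|^{2}(\mu)=\int_{\mathbb{R}^{d}}\Bigl|\kappa\frac{\nabla\rho}{\rho}+\nabla_{x}V(t,x)\Bigr|^{2}d\mu(x),
\]
with $\Psi_{1}(t,\cdot)$ the element of minimal norm in the Wasserstein subdifferential of $\mathcal{E}_{1}(t,\cdot)$ at $\mu$; for $\kappa=0$ one gets $\Psi_{1}=\partial^{\circ}V(t,\cdot)$. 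Combined with Remark~\ref{remark:solution-in-localslope} and the finiteness of $\mathcal{E}_{1}$ along the curve (which for $\kappa>0$ already forces $\mu_{t}\ll\mathcal{L}^{d}$), this yields the claimed $\rho(t,\cdot)\in W_{loc}^{1,1}$ for a.e.\ $t$.

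Next I would recall that, $\mu$ being absolutely continuous, \cite[Ch.~8]{Ambrosio} furnishes a velocity field $v_{t}$ with $\partial_{t}\mu_{t}+\nabla\cdot(v_{t}\mu_{t})=0$ in $\mathcal{D}'$ and $\|v_{t}\|_{L^{2}(\mu_{t})}=|\mu'|(t)$. Writing \eqref{eq:energy-identity} between $s$ and $t$ and using \eqref{eq:deriv-V} to turn $\partial_{t}\mathcal{E}_{1}(r,\mu_{r})$ into $\int\partial_{t}V(r,x)\,d\mu_{r}$, the identity says precisely that $\mu$ is a curve of maximal slope for $\mathcal{E}_{1}(t,\cdot)$; by the sharp chain rule and the subdifferential characterization of \cite[Ch.~10--11]{Ambrosio}, equality propagates to $v_{t}=-\Psi_{1}(t,\cdot)$ for a.e.\ $t$, whence $v_{t}\mu_{t}=-(\kappa\nabla\rho_{t}+\rho_{t}\nabla_{x}V(t,\cdot))$ and the continuity equation becomes \eqref{eq:fokker-planck-1}. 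The energy identity \eqref{eq:energy-fokker-planc} is then just \eqref{eq:energy-identity} rewritten on $[s,t]$: the two dissipation terms coincide and sum to $\int_{s}^{t}\!\int|\Psi_{1}|^{2}d\mu_{r}\,dr$, while the $\partial_{t}\mathcal{E}_{1}$ contribution is $\int_{s}^{t}\!\int\partial_{t}V(r,x)\,d\mu_{r}\,dr$. Finally, the contraction \eqref{eq:fokker-planck-contraction} is a direct instance of \eqref{eq:contraction-property} from Subsection~\ref{subsect:contraction-property}, valid once \textbf{E6} holds, with $\lambda$ the convexity modulus of \textbf{V1}.

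The main obstacle is the identification $v_{t}=-\Psi_{1}(t,\cdot)$: that the metric gradient flow really carries the ``correct'' velocity so that the continuity equation specializes to Fokker--Planck. This rests on the sharp chain rule for $\mathcal{E}_{1}(t,\cdot)$ in $\mathscr{P}_{2}(\mathbb{R}^{d})$ and on computing the minimal selection in its Wasserstein subdifferential; one must also verify that the time-dependence of $V$ leaves this frozen-time computation intact, which it does because \textbf{V3} and the differentiability assumption confine the effect of $t\mapsto\mathcal{V}(t,\mu)$ to the lower-order term $\int\partial_{t}V\,d\mu$ already accounted for in \eqref{eq:energy-identity}.
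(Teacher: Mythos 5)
Your proposal is correct and follows essentially the same route as the paper: verify \textbf{E1}--\textbf{E5} and the time-differentiability via Proposition \ref{prop:Hip-V} and Lemma \ref{lem:t-diff-V}, invoke the abstract Theorems \ref{teor:convergence-scheme-discret} and \ref{theor:sol-flux-grad} for the curve and the energy identity \eqref{eq:energy-identity}, and then use the Wasserstein chain rule together with the identification of the slope/minimal selection and the velocity field to force $v_t=-\Psi_1(t,\cdot)$, which yields \eqref{eq:fokker-planck-1}, \eqref{eq:energy-fokker-planc}, and the contraction from \eqref{eq:contraction-property}. The paper's proof is just a compressed version of this argument, computing $\frac{d}{dt}\mathcal{E}_1(t,\mu(t))$ by splitting the difference quotient and comparing with \eqref{eq:energy-identity} to get the equality case, exactly as you describe.
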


\begin{proof}
First we calculate the variation of $\mathcal{E}_1(t,\mu(t))$. We have that
\begin{eqnarray}
\mathcal{E}_1(s,\mu(s))-\mathcal{E}_1(t,\mu(t))&=&\kappa\left(\mathcal{U}(\mu(s))-\mathcal{U}(\mu(t))\right)
+\int_{\mathbb{R}^d}{\left(V(s,x)-V(t,x)\right)\ d\mu_s}\\
&\ &+\int_{\mathbb{R}^d}{V(t,x)\ d\mu_s}-\int_{\mathbb{R}^d}{V(t,x)\ d\mu_t}.\label{aux-est-6}
\end{eqnarray}
Dividing (\ref{aux-est-6}) by $s-t$, using Lemma \ref{lem:t-diff-V}, and recalling that the function $\mathcal{E}_1(t,\mu(t))$ is absolutely
continuous, we get
\begin{equation}
\label{eq:formula-derivative-E1}
\frac{d}{dt}\mathcal{E}_1(t,\mu(t))=-\int_{\mathbb{R}^d}{\langle\Psi_1(t,x),v(t,x)\rangle\ d\mu_t(x)}
+\int_{\mathbb{R}^d}{\partial_tV(t,x)\ d\mu_t(x)},
\end{equation}
where $v:[0,\infty)\times\mathbb{R}^d\to\mathbb{R}^d$ is the vector field associated to the absolutely continuous curve $\mu_t$, $\lVert v(t,\cdot)\rVert_{L^2(\mu_t;\mathbb{R}^d)}=\vert\mu'\vert(t)$, and $\Psi_1$ is the vector field satisfying $\lVert \Psi_1(t)\rVert_{L^2(\mu_t;\mathbb{R}^d)}=\vert\partial\mathcal{E}_1(t)\vert(\mu_t)$. Moreover, $v$ verifies the continuity equation
\begin{equation}
\partial_t\mu_t+\nabla\cdot(v_t\mu_t)=0,
\end{equation}
in the distributional sense, with $v_t(x)=v(t,x)$. Using (\ref{eq:energy-identity}) together with (\ref{eq:formula-derivative-E1}), we obtain $-\Psi_1(t,x)=v(t,x)$ for $\mu_t$-a.e. $x\in\mathbb{R}^d$ and the identity \eqref{eq:energy-fokker-planc}.
\end{proof}

Similar results hold true for the functional $\mathcal{E}_{2}$. In what
follows, we state the hypotheses for $W$ and $\mathcal{W}$ in order to treat
$\mathcal{E}_{2}$ in light of the abstract Theorem \ref{theor:sol-flux-grad}
in metric spaces.

\begin{description}
\item[W1.-] For each fixed $t\geq0$, the interaction potential $W(t,x,y)$ is
symmetric and, for $t=0$, it satisfies a quadratic growth condition, namely
$W(t,x,y)=W(t,y,x)$ and $W(0,x,y)\leq C(1+|x|^{2}+|y|^{2})$.

\item[W2.-] For each fixed $t\geq0$, $W(t,\cdot)$ is $\lambda(t)-$convex, for
some function $\lambda:[0,\infty)\rightarrow\mathbb{R}$ as in \textbf{E5}. Let
$\partial^{\circ}W(t,x,y)$ denote the element of minimal norm in the
subdifferential of $W(t,\cdot)$ at the point $(x,y)\in\mathbb{R}^{d}%
\times\mathbb{R}^{d}$. We assume that $|\partial^{\circ}W(t,0,0)|$ is locally
bounded and $t\rightarrow W(t,0,0)$ is locally bounded from below.

\item[W3.-] There exists a function $\beta\in L_{loc}^{1}([0,+\infty))$ such
that
\begin{equation}
|W(s,x,y)-W(t,x,y)|\leq\int_{s}^{t}{\beta(r)\ dr}(1+|x|^{2}+|y|^{2}),\text{
for }0\leq s<t. \label{eq:W3}%
\end{equation}

\end{description}

The reason for assuming \textbf{W1} is to obtain a quadratic growth for
$W(t,x,y),$ for each $t>0,$ and then one can use the results in
\cite{Carrillo-lisini-mainini}. In fact, using \textbf{W1}, this growth
follows directly from \textbf{W3}. Proceeding as in Proposition
\ref{prop:Hip-V}, again we get that the functional $\mathcal{E}_{2}$ satisfies
\textbf{E1} to \textbf{E5}. Assuming a differentiability property in the
$t$-variable, we obtain the analogous of Lemma \ref{lem:t-diff-V}. Here we
only state the results for the functional $\mathcal{E}_{2}$. The proof is
similar to that of Theorem \ref{teor:Fokk-Planck-eq} and is left to the reader.

\begin{teor}
\label{teor:Mckean-Vlasov-equation} Consider the functional $\mathcal{E}_{2}$
with the interaction potential $W$ satisfying \textbf{W1} to \textbf{W3}.
Suppose also that for $(x,y)\in\mathbb{R}^{d}\times\mathbb{R}^{d}$ the
function $t\rightarrow W(t,x,y)$ is differentiable. Then, given $\mu_{0}%
\in\mathscr{P}_{2}(\mathbb{R}^{d})$, the curve $\mu:[0,\infty)\rightarrow
\mathscr{P}_{2}(\mathbb{R}^{d})$ given in Theorem
\ref{teor:convergence-scheme-discret} is a distributional solution for the
continuity equation
\begin{equation}
\partial_{t}\rho=\nabla\cdot(\mathbf{v}(t,x)\rho), \label{eq:mckean-Vlasov-1}%
\end{equation}
with $\lim_{t\rightarrow0^{+}}\mu(t)=\mu_{0}$ weakly as measure, where
\begin{equation}
\mathbf{v}(t,x)=\int_{\mathbb{R}^{d}}{\eta(t,x,y)\rho(y)\ dy},\text{ \ }%
\mu\text{-a.e. in \ }\mathbb{R}^{d},
\label{eq:vector-field-mckean-vlasov}%
\end{equation}
and $\eta(t,x,y)=\frac{1}{2}(\eta_{1}(t,x,y)+\eta_{2}(t,y,x))$ for some Borel
measurable selection $(\eta_{1},\eta_{2})\in\partial W(t,\cdot,\cdot)$.
Moreover, $\mu$ satisfies the energy identity
\begin{equation}
\mathcal{E}_{2}(s,\mu(s))=\mathcal{E}_{2}(t,\mu(t))+\int_{s}^{t}%
{\int_{\mathbb{R}^{d}}{(|\mathbf{v}(r,x)|^{2}-\partial_{t}W(r,x))\ d\mu
_{r}(x)}\ dr} \label{eq:energy-mckean-vlasov}%
\end{equation}
for $s<t$. Furthermore, if the function $\lambda$ satisfies {\bf{E6}} and
$\mu_{1},\mu_{2}$ are two solutions, we have the contraction property
\begin{equation}
\mathbf{d}_{2}(\mu_{1}(t),\mu_{2}(t))\leq e^{-\int_{0}^{t}\lambda
(s)\ ds}\mathbf{d}_{2}(\mu_{0},\mu_{1}). \label{eq:mckean-vlasov-contraction}%
\end{equation}

\end{teor}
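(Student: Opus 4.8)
The plan is to mirror the proof of Theorem \ref{teor:Fokk-Planck-eq}, with the interaction energy $\mathcal{E}_2(t,\cdot)=\mathcal{W}(t,\cdot)$ playing the role of $\mathcal{E}_1$. First I would verify that $\mathcal{E}_2$ fits the abstract framework. Arguing as in Proposition \ref{prop:Hip-V} with $u^{\ast}=\delta_0$, hypotheses \textbf{W1}--\textbf{W3} give a lower bound $W(t,x,y)\ge -A(t)-B_T(|x|^2+|y|^2)$ on $[0,T]$, whence $\mathcal{W}(t,\cdot)$ is $\mathbf{d}_2$-lower semicontinuous (\textbf{E1}); the quadratic growth of $W(t,\cdot,\cdot)$, which as noted follows from \textbf{W1} and \textbf{W3}, makes $\mathrm{Dom}(\mathcal{W}(t,\cdot))$ nonempty and time-independent (\textbf{E2}); \textbf{E3} is \textbf{W3} integrated against $\mu\times\mu$; \textbf{E4} follows from the lower bound together with finiteness of the second moment and the choice $1/\tau^{\ast}>1+\lambda_T^-$; and \textbf{E5} comes from $\lambda(t)$-convexity of $W(t,\cdot)$ along generalized geodesics with a common base point (Definition \ref{def:ganaralized geodesics}) together with the convexity of $\mu\mapsto\mathbf{d}_2^2(\sigma,\mu)$ along them. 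The differentiability hypothesis on $t\mapsto W(t,x,y)$ then yields, by the dominated-convergence argument of Lemma \ref{lem:t-diff-V},
\[
\lim_{n\to\infty}\frac{\mathcal{W}(t_n,\mu_n)-\mathcal{W}(t,\mu_n)}{t_n-t}=\frac12\iint_{\mathbb{R}^d\times\mathbb{R}^d}\partial_tW(t,x,y)\,d(\mu\times\mu)(x,y)
\]
whenever $t_n\to t$ and $\mathbf{d}_2(\mu_n,\mu)\to0$, which in particular gives upper semicontinuity in the $\mu$-variable and property \eqref{eq:lower-semicon-of-derivative} of Theorem \ref{theor:sol-flux-grad}.

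With these checks done, Theorems \ref{teor:convergence-scheme-discret} and \ref{theor:sol-flux-grad} produce the curve $\mu$, its local absolute continuity, $|\mu'|\in L^2_{loc}$, the absolute continuity of $t\mapsto\mathcal{E}_2(t,\mu(t))$, and the energy identity \eqref{eq:energy-identity}. I would then identify the metric slope of $\mathcal{W}(t,\cdot)$: combining the characterization \eqref{eq:formula-inclinacao} with the (sub)differential calculus for interaction functionals of quadratic growth in $\mathscr{P}_2(\mathbb{R}^d)$ (see \cite{Carrillo-lisini-mainini} and \cite{Ambrosio}), one gets $|\partial\mathcal{E}_2(t)|(\mu)=\lVert\mathbf{v}(t,\cdot)\rVert_{L^2(\mu;\mathbb{R}^d)}$, where $\mathbf{v}$ is given by \eqref{eq:vector-field-mckean-vlasov} with the symmetrized kernel $\eta(t,x,y)=\tfrac12(\eta_1(t,x,y)+\eta_2(t,y,x))$ for a Borel selection $(\eta_1,\eta_2)\in\partial W(t,\cdot,\cdot)$, the factor $\tfrac12$ appearing because one differentiates the double integral $\tfrac12\iint W\,d\mu\,d\mu$. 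Letting $v_t$ denote the velocity field of the curve $\mu$ (so that $\partial_t\mu_t+\nabla\!\cdot\!(v_t\mu_t)=0$ in $\mathcal{D}'$ and $\lVert v_t\rVert_{L^2(\mu_t)}=|\mu'|(t)$), the decomposition of $\mathcal{E}_2(s,\mu(s))-\mathcal{E}_2(t,\mu(t))$ into a $t$-variation term and a spatial term, divided by $s-t$ and passed to the limit with the displayed identity above, gives
\[
\frac{d}{dt}\mathcal{E}_2(t,\mu(t))=-\int_{\mathbb{R}^d}\langle\mathbf{v}(t,x),v(t,x)\rangle\,d\mu_t(x)+\int_{\mathbb{R}^d}\partial_tW(t,x)\,d\mu_t(x),
\]
where the last integral stands for $\tfrac12\iint\partial_tW(t,x,y)\,d(\mu_t\times\mu_t)$.

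Comparing this with \eqref{eq:energy-identity} and completing the square exactly as in Theorem \ref{teor:Fokk-Planck-eq} forces $v(t,x)=-\mathbf{v}(t,x)$ for $\mu_t$-a.e.\ $x$; hence $\partial_t\mu_t=\nabla\!\cdot\!(\mathbf{v}(t,\cdot)\mu_t)$ in the distributional sense, which is \eqref{eq:mckean-Vlasov-1}, and substituting $v=-\mathbf{v}$ back and integrating over $[s,t]$ yields \eqref{eq:energy-mckean-vlasov}. The initial condition ($\mu(t)\rightharpoonup\mu_0$ as $t\to0^+$) follows from $\mu(0)=\mu_0$, continuity of $\mu$ in $\mathbf{d}_2$, and the fact that $\mathbf{d}_2$-convergence implies narrow convergence. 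Finally, if $\lambda$ satisfies \textbf{E6}, the contraction estimate \eqref{eq:mckean-vlasov-contraction} is a direct application of \eqref{eq:contraction-property} from Subsection \ref{subsect:contraction-property}. The step I expect to be the main obstacle is the slope identification: one must exhibit a Borel measurable selection $(\eta_1,\eta_2)\in\partial W(t,\cdot,\cdot)$, check that the resulting $\mathbf{v}(t,\cdot)$ belongs to $L^2(\mu_t)$ with norm equal to $|\partial\mathcal{E}_2(t)|(\mu_t)$, and deal with the possible non-smoothness of $W(t,\cdot,\cdot)$ — this is precisely where the quadratic-growth hypothesis \textbf{W1} and the results of \cite{Carrillo-lisini-mainini} are needed.
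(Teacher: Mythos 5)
Your proposal is correct and follows essentially the same route the paper intends: the paper explicitly leaves this proof to the reader as "similar to that of Theorem \ref{teor:Fokk-Planck-eq}", after noting that \textbf{W1}--\textbf{W3} give \textbf{E1}--\textbf{E5} as in Proposition \ref{prop:Hip-V} and that the characterization of $\mathbf{v}$ via a Borel selection in $\partial W(t,\cdot,\cdot)$ comes from \cite{Carrillo-lisini-mainini} (Remark \ref{rem-aux-3}). Your verification of the hypotheses, the slope identification, the completing-the-square argument yielding $v=-\mathbf{v}$, the energy identity, and the contraction via \eqref{eq:contraction-property} match the paper's intended argument.
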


\begin{rem}
\label{rem-aux-3} Let us observe that the vector field $\mathbf{v}$ in
\eqref{eq:mckean-Vlasov-1} is characterized by the form
\eqref{eq:vector-field-mckean-vlasov} thanks to the results of
Carrillo-Lisini-Mainini \cite{Carrillo-lisini-mainini}. They showed that, in
general, the Borel measurable selection of $\eta$ depends on the probability
$\mu\in\mathscr{P}_{2}(\mathbb{R}^{d})$ and is not necessarily given by the
minimal selection in the subdifferential of $W$. In the particular case when
$W(t,x,y)=w(t,y-x)$ is given by a symmetric function $w:[0,\infty
)\times\mathbb{R}^{d}\rightarrow\mathbb{R}$, the $\lambda(t)$-convexity of $W(t,\cdot
,\cdot)$ follows from the one of $w$ only if $\lambda(t)\leq0$ and therefore we can not use the
results for any $\lambda(t)$-convexity of $w$.

Of course, we can consider a functional of the type (see subsection below to the
time-dependent viscosity)
\[
\mathcal{E}(t,\mu)=\mathcal{U}(\mu)+\mathcal{W}(t,\mu)
\]
and apply the metric theory in order to obtain existence of curves satisfying
the conclusions in Theorem \ref{theor:sol-flux-grad}, the contraction property
and a continuity equation. On the other hand, we do not know how to describe
the velocity field $\mathbf{v}$ in this general case, however it is expect
that $\mu$ satisfies a Mackean-Vlasov equation of the type $\partial_{t}\mu
=\Delta\mu+\nabla\cdot(\mathbf{v}\mu)$ for $\mathbf{v}$ as in \eqref{eq:vector-field-mckean-vlasov}.

Finally, if we assume that $W(t,x,y)=w(t,y-x)$ is $\lambda(t)$-convex with
$\lambda(t)\leq0$ and satisfies a doubling condition property $w(t,x+y)\leq
C_{t}(1+w(x)+w(y))$, then one can show that the curve $\mu_{t}$ given in
Theorem \ref{teor:convergence-scheme-discret} is a distributional solution of
the Mackean-Vlasov equation
\[
\partial_{t}\mu_{t}=\Delta\mu_{t}+\nabla\cdot((\nabla w(t)\ast\mu_{t})\mu
_{t}).
\]

\end{rem}

\subsection{The case with time-dependent diffusion}

\label{subsect:variable-diffusion}\hspace{0.5cm}Now we consider the case when
$\kappa:[0,\infty)\rightarrow(0,\infty).$ For the sake of simplicity, we
consider the functional%

\begin{equation}
\mathcal{E}(t,\mu)=\kappa(t)\mathcal{U}(\mu)+\mathcal{V}(t,\mu),
\label{eq:kappa-time-dependent}%
\end{equation}
where $\mathcal{U}$ and $\mathcal{V}$ are defined in \eqref{eq:entropy-td} and
\eqref{eq:potential-energy-td}, respectively, and $\kappa$ is a positive
function locally absolutely continuous. Also, we assume that \textbf{V1} to
\textbf{V3} hold true. Thus, by assuming that $V(0,\cdot)$ satisfies
\eqref{eq:prop-hip-V-aux}, we have that the domain of $\mathcal{E}$ is
time-independent. Notice that the functional $\mathcal{E}$ satisfies
\textbf{E1}, \textbf{E2}, \textbf{E4} and \textbf{E5}, but not \textbf{E3}. In
fact, as observed in Remark \ref{aux-appl-1}, \textbf{E3 }holds true, if and
only if, $\kappa(t)$ is constant. Here we need to assume that $\kappa$ is
non-increasing. An important fact is the following:

\begin{rem}
\label{rem:entropy-boundedness} Let $\mu_{n}\in\mathscr{P}_{2}(\mathbb{R}%
^{d})$ and $t_{n}\in\lbrack0,\infty)$ be two bounded sequences, where $\mu
_{n}$ is bounded with respect to the Wasserstein metric $\mathbf{d}_{2}$, such
that the numeric sequence $\mathcal{E}(t_{n},\mu_{n})$ is bounded from above.
Then, the numeric sequences $\mathcal{U}(\mu_{n})$ and $\mathcal{V}(t_{n}%
,\mu_{n})$ are bounded. In fact, it follows from \eqref{eq:otto-estimate} that
the sequence $\kappa(t_{n})\mathcal{U}(\mu_{n})$ is bounded from below and
thus $\mathcal{V}(t_{n},\mu_{n})$ is bounded from above. Similarly, from
\eqref{eq:hip-V-1} we have that $\mathcal{V}(t_{n},\mu_{n})$ is bounded from
below, and then $\kappa(t_{n})\mathcal{U}(\mu_{n})$ is bounded from above.
\end{rem}

Using Remark \ref{rem:entropy-boundedness}, we obtain easily the same
conclusions of Lemmas \ref{lemma:existence-minimizer} and
\ref{lem:continuity-minimizer}. Let the potential $V$ be differentiable in the $t$-variable. Using the minimality of $\mu_{\tau}^{t+\tau}$, we have that for
$\tau_{0}<\tau_{1}$
\begin{align*}
\mathscr{E}_{t+\tau_{1},\tau_{1}}(u)-\mathscr{E}_{t+\tau_{0},\tau_{0}}(u)  &
\leq(\kappa(t+\tau_{1})-\kappa(t+\tau_{0}))\mathcal{U}(\mu_{\tau_{0}}%
^{t+\tau_{0}})+\mathcal{V}(t+\tau_{1},u_{\tau_{0}}^{t+\tau_{0}})\\
&  \ -\mathcal{V}(t+\tau_{0},u_{\tau_{0}}^{t+\tau_{0}})+\frac{\tau_{0}%
-\tau_{1}}{2\tau_{1}\tau_{0}}\mathbf{d}_{2}^{2}(u,u_{\tau_{0}}^{t+\tau_{0}})\\
&  \leq(\kappa(t+\tau_{1})-\kappa(t+\tau_{0}))\mathcal{U}(\mu_{\tau_{0}%
}^{t+\tau_{0}})+\frac{\tau_{0}-\tau_{1}}{2\tau_{1}\tau_{0}}\mathbf{d}_{2}%
^{2}(u,u_{\tau_{0}}^{t+\tau_{0}})\\
&  \ +\int_{t+\tau_{0}}^{t+\tau_{1}}{\beta(r)\ dr}(1+M(u_{\tau_{0}}%
^{t+\tau_{0}})),
\end{align*}
where above we used the estimate \eqref{eq:otto-estimate}. Analogously, the
reverse inequality follows. In view of Remark \ref{rem:entropy-boundedness},
we can argue as in Proposition \ref{prop:differentiability-property1} and
Corollary \ref{cor:integral-identity} in order to obtain the identity
\eqref{eq:integral-identity} for the functional (\ref{eq:kappa-time-dependent}).

Up until this point, notice that we have not needed the monotonicity
hypothesis for $\kappa$. In what follows, we comment on an essential step in
order to recover Lemma \ref{lemma:limit-sec-discret}. In fact, recalling the
notation for discrete solution $(U_{\boldsymbol{\tau}}^{j})$ of the
variational scheme, and using that $\kappa(t)$ is non-increasing and
\eqref{eq:otto-estimate}, we estimate
\begin{align*}
\sum_{j=1}^{n}{\kappa(t_{\boldsymbol{\tau}}^{j})(\mathcal{U}%
(U_{\boldsymbol{\tau}}^{j-1})-\mathcal{U}(U_{\boldsymbol{\tau}}^{j}))}  &
\leq\kappa(0)\mathcal{U}(U_{\boldsymbol{\tau}}^{0})-\kappa
(t_{\boldsymbol{\tau}}^{n})\mathcal{U}(U_{\boldsymbol {\tau}}^{n})\\
&  \ -C\sum_{j=1}^{n}{(\kappa(t_{\boldsymbol{\tau}}^{j})-\kappa
(t_{\boldsymbol{\tau}}^{j-1}))}(1+M_{2}(U_{\boldsymbol {\tau}}^{j-1})).
\end{align*}
From here we can repeat the arguments in order to obtain the same conclusion
of Lemma \ref{lemma:limit-sec-discret}. In the case when $\mathcal{V}\equiv0$,
it is not necessary to suppose the monotonicity of $\kappa$ because the
difference $\mathcal{U}(U_{\boldsymbol{\tau}}^{j-1})-\mathcal{U}%
(U_{\boldsymbol{\tau}}^{j})$ is positive and a more direct estimate can be performed.

Going back to Section \ref{sect:apriori-estimates}, it is easy to see that it
remains only to estimate
\begin{equation}
\int_{0}^{t}{[(1-l_{\boldsymbol{\tau}}(s))(\kappa(\mathcal{T}%
_{\boldsymbol{\eta}}(s))-\kappa(\mathcal{T}_{\boldsymbol{\tau}}%
(s)))\mathcal{U}(\underline{U}_{\boldsymbol{\tau}}(s))]^{+}\ ds},\text{ for
}0\leq t\leq T, \label{aux-est-7}%
\end{equation}
where $T>0$ is fixed and $\boldsymbol{\tau}$, $\boldsymbol{\eta}$ are two
partitions with small sizes. Indeed, since $\mathcal{E}(\mathcal{T}%
_{\boldsymbol{\tau}}(t),\overline{U}_{\boldsymbol{\tau}}(t))$ is bounded from
above by a constant independent of $\boldsymbol{\tau}$, it follows from Remark
\ref{rem:entropy-boundedness} that $\mathcal{U}(\underline{U}%
_{\boldsymbol{\tau}}(t))$ is bounded by a constant independent of
$\boldsymbol{\tau}$. Thus, the integral (\ref{aux-est-7}) can be estimated by
proceeding similarly to Proposition \ref{prop:residuo-G}, and then we obtain
the convergence of the approximate solutions \eqref{eq:sol-interpolante1}. In
this way, the functional $\mathcal{E}$ defined in
\eqref{eq:kappa-time-dependent} presents properties and results contained in
Sections \ref{sect:apriori-estimates} and \ref{sect:regularity}. So, we have
the following:

\begin{teor}
\label{teor:kappa-time-depend} Let $\mathcal{E}$ be the functional defined in
\eqref{eq:kappa-time-dependent} with $\kappa:[0,\infty)\rightarrow(0,\infty)$
an non-increasing absolutely continuous function and let the potential $V$
satisfy \textbf{V1} to \textbf{V3} and the differentiability condition in
Lemma \ref{lem:t-diff-V}. Then, given $\mu_{0}\in\mathscr{P}_{2}%
(\mathbb{R}^{d})$, the curve $\mu:[0,\infty)\rightarrow\mathscr{P}_{2}%
(\mathbb{R}^{d})$ obtained in Theorem \ref{teor:convergence-scheme-discret} is
absolutely continuous with respect to the Lebesgue measure, i.e. $d\mu
_{t}(x)=\rho(t,x)dx$, $\rho(t,\cdot)\in W_{loc}^{1,1}(\mathbb{R}^{d})$ for
each $t\in\lbrack0,\infty)$, and $\rho$ is a distributional solution for the
Fokker-Planck equation
\begin{equation}
\partial_{t}\rho=\kappa(t)\Delta\rho+\nabla\cdot(\nabla V(t,x)\rho),
\label{eq:kappa-time-dependent-fokker-planck-1}%
\end{equation}
with $\lim_{t\rightarrow0^{+}}\mu(t)=\mu_{0}$ weakly as measure. Also,
$\mu(t)$ satisfies the energy identity
\begin{align}
\mathcal{E}_{1}(s,\mu(s))  &  =\mathcal{E}_{1}(t,\mu(t))+\int_{s}^{t}%
{\int_{\mathbb{R}^{d}}{(|\Psi_{1}(r,t)|^{2}-\partial_{t}V(r,x))\rho
(r,x)\ dx}\ dr}\nonumber\\
&  \ -\int_{s}^{t}{\int_{\mathbb{R}^{d}}{\kappa^{\prime}(r)\rho(r,x)\log
(\rho(r,x))\ dx}\ dr,}\text{ for }s<t,
\label{eq:kappa-time-dependent-energy-fokker-planc}%
\end{align}
where $\Psi_{1}:[0,\infty)\times\mathbb{R}^{d}\rightarrow\mathbb{R}^{d}$ is a
vector field satisfying
\begin{equation}
\rho(t,x)\Psi_{1}(t,x)=\kappa(t)\nabla\rho(t,x)+\rho(t,x)\nabla V(t,x)\,\text{
for }\mu_{t}\text{-a.e. \ }x\in\mathbb{R}^{d}. \label{eq:kappa-time-dependent-fokker-planck-vector-field}%
\end{equation}
Moreover, if the function $\lambda$ satisfies {\bf{E6}} and $\mu_{1},\mu
_{2}$ are two solutions, we have the contraction property
\begin{equation}
\mathbf{d}_{2}(\mu_{1}(t),\mu_{2}(t))\leq e^{-\int_{0}^{t}\lambda
(s)\ ds}\mathbf{d}_{2}(\mu_{0},\mu_{1}).
\label{eq:kappa-time-dependent-fokker-planck-contraction}%
\end{equation}

\end{teor}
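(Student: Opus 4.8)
The plan is to check that the functional $\mathcal{E}$ in \eqref{eq:kappa-time-dependent} falls under the scope of the abstract results of Sections \ref{sect:apriori-estimates} and \ref{sect:regularity}, and then to identify the resulting curve with a distributional solution of \eqref{eq:kappa-time-dependent-fokker-planck-1}, following the pattern of the proof of Theorem \ref{teor:Fokk-Planck-eq}. First I would invoke Theorem \ref{teor:convergence-scheme-discret} to produce the limit curve $\mu$ with $\mu(0)=\mu_0$, which is legitimate because, as explained in the discussion preceding the statement, Lemma \ref{lemma:limit-sec-discret} and the residual estimate corresponding to \eqref{aux-est-7} were recovered using Remark \ref{rem:entropy-boundedness} together with the monotonicity of $\kappa$. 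To pass to Theorem \ref{theor:sol-flux-grad} I must verify its additional hypotheses for $\mathcal{E}$: differentiability of $t\mapsto\mathcal{E}(t,\mu)$ for $\mu\in\mathbf{D}$ follows from Lemma \ref{lem:t-diff-V} and the absolute continuity of $\kappa$, with $\partial_t\mathcal{E}(t,\mu)=\kappa'(t)\mathcal{U}(\mu)+\int_{\mathbb{R}^d}\partial_tV(t,x)\,d\mu(x)$; upper semicontinuity of this derivative in the Wasserstein variable follows from \eqref{eq:deriv-V} for the potential term and from the lower semicontinuity of $\mathcal{U}$ combined with $\kappa'\le 0$ for the entropy term --- this is the point where the non-increasing assumption on $\kappa$ is used once more; and the one-sided bound \eqref{eq:lower-semicon-of-derivative} is obtained by the same lower semicontinuity argument. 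Theorem \ref{theor:sol-flux-grad} then yields local absolute continuity of $\mu$, $|\mu'|\in L^2_{loc}([0,\infty))$, and the energy identity \eqref{eq:energy-identity} for $\mathcal{E}$.

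The second step is the PDE identification. By the subdifferential calculus in $\mathscr{P}_2(\mathbb{R}^d)$ for sums of internal and potential energies (see \cite{Ambrosio}), at almost every $t$ the finiteness of $|\partial\mathcal{E}(t)|(\mu_t)$ forces $\mu_t=\rho(t,\cdot)\,dx$ with $\nabla\rho(t,\cdot)$ locally integrable and identifies the minimal-norm element of the Wasserstein subdifferential with the field $\Psi_1(t,x)$ of \eqref{eq:kappa-time-dependent-fokker-planck-vector-field}, so that $\|\Psi_1(t,\cdot)\|_{L^2(\mu_t)}=|\partial\mathcal{E}(t)|(\mu_t)$; since $\kappa(t)>0$, solving \eqref{eq:kappa-time-dependent-fokker-planck-vector-field} for $\nabla\rho$ gives $\rho(t,\cdot)\in W^{1,1}_{loc}(\mathbb{R}^d)$. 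On the other hand, $\mu$ being absolutely continuous carries a velocity field $v_t$ with $\partial_t\mu_t+\nabla\cdot(v_t\mu_t)=0$ and $\|v_t\|_{L^2(\mu_t)}=|\mu'|(t)$. Differentiating $\mathcal{E}(t,\mu(t))$ along the curve gives $\frac{d}{dt}\mathcal{E}(t,\mu(t))=-\int\langle\Psi_1,v_t\rangle\,d\mu_t+\int\partial_tV\,d\mu_t+\kappa'(t)\mathcal{U}(\mu_t)$, and comparing with \eqref{eq:energy-identity} forces, by the usual Cauchy-Schwarz equality case, $v_t=-\Psi_1(t,\cdot)$ $\mu_t$-a.e.; substituting this into the continuity equation produces $\partial_t\rho=\nabla\cdot(\Psi_1\rho)=\kappa(t)\Delta\rho+\nabla\cdot(\nabla V(t,x)\rho)$, i.e. \eqref{eq:kappa-time-dependent-fokker-planck-1} in the distributional sense, while substituting it into \eqref{eq:energy-identity} gives \eqref{eq:kappa-time-dependent-energy-fokker-planc}, the extra term $-\int_s^t\kappa'(r)\mathcal{U}(\mu_r)\,dr$ arising precisely from $\kappa'(t)\mathcal{U}(\mu_t)$. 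The convergence $\mu(t)\rightharpoonup\mu_0$ as $t\to 0^+$ is immediate from narrow continuity of an absolutely continuous curve and from $\mu(0)=\mu_0$.

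Finally, under \textbf{E6} the contraction estimate \eqref{eq:kappa-time-dependent-fokker-planck-contraction} follows from \eqref{eq:contraction-property} of subsection \ref{subsect:contraction-property}, since $\mathcal{E}(t,\cdot)$ is $\lambda(t)$-convex along generalized geodesics --- convexity of $\mathcal{U}$ along such geodesics together with the $\lambda(t)$-convexity of $V(t,\cdot)$, exactly as in Proposition \ref{prop:Hip-V}. The hard part of the whole argument is not the PDE identification, which is routine once the machinery is available and parallels Theorem \ref{teor:Fokk-Planck-eq}; rather it is the bookkeeping already carried out before the statement --- recovering Lemma \ref{lemma:limit-sec-discret} and controlling \eqref{aux-est-7} in the absence of \textbf{E3} --- together with the closely related verification that $\partial_t\mathcal{E}$ is upper semicontinuous in the measure variable, both of which are what make the monotonicity hypothesis on $\kappa$ unavoidable here.
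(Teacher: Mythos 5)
Your proposal is correct and follows essentially the same route as the paper, whose own proof of this theorem consists precisely of the discussion preceding the statement (recovering the scheme estimates of Lemma \ref{lemma:limit-sec-discret} and the residual control \eqref{aux-est-7} via Remark \ref{rem:entropy-boundedness} and the monotonicity of $\kappa$) followed by a repetition of the identification argument in the proof of Theorem \ref{teor:Fokk-Planck-eq}, which is exactly your second step. The one point where you are glossier than you acknowledge is the verification of \eqref{eq:lower-semicon-of-derivative} for the entropy term: when $\kappa'(t)<0$, lower semicontinuity of $\mathcal{U}$ alone gives the inequality in the wrong direction, and one must also invoke the boundedness of $\mathcal{U}$ along the relevant sequences (Remark \ref{rem:entropy-boundedness}) --- but the paper leaves this at the same level of detail, so this is not a departure from its argument.
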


\subsection{More general internal energy}

\label{sub-general}\hspace{0.5cm}In this subsection we give the outline to construct the
time-dependent gradient flow for more general internal energy functionals. Let
$U:[0,\infty)\times\lbrack0,\infty)\rightarrow\mathbb{R}$ be a continuous
function such that $C^{1}((0,\infty)\times(0,\infty))$. Consider the internal
energy functional
\begin{equation}
\mathcal{U}(t,\mu)=\left\{
\begin{array}
[c]{ccc}%
\int_{\mathbb{R}^{d}}{U(t,\rho(x))\ dx,} & \text{ if } & d\mu=\rho\ dx\\
+\infty, & \text{ } & \text{otherwise.}%
\end{array}
\right.  \label{eq:general-internal-energy}%
\end{equation}

We assume the following condition on $U$.

\begin{description}
\item[U1.-] There exist functions $a,A:[0,\infty)\rightarrow\lbrack0,\infty)$
with $a\in L_{loc}^{1}([0,\infty))$ and $A\in L^{1}([0,\infty))$ such that
\begin{equation}
-A(t)U^{+}(0,z)\leq\frac{\partial U}{\partial t}(t,z)\leq a(t)U^{-}(0,z),
\label{eq:U1-cond}%
\end{equation}
for all $t,z\in\lbrack0,+\infty)$, and $U(0,z)$ has superlinear growth at
infinite, i.e. $\displaystyle\lim_{z\rightarrow+\infty}\frac{U(0,z)}%
{z}=+\infty$.

\item[U2.-] There exist $\alpha\in(0,1)$ with $\alpha>\frac{d}{d+2}$ and
positive constants $c_{1},c_{2}\geq0$ such that
\[
U(0,z)\geq-c_{1}z-c_{2}z^{\alpha}.
\]

\item[U3.-] $U(0,0)=0$, $z\rightarrow U(t,z)$ is convex$,$ and $z\rightarrow
z^{d}U(t,z^{-d})$ is convex and non-increasing on $(0,+\infty),$ for each
$t>0$.
\end{description}

Without loss of generality, we can assume that $\Vert A\Vert_{1}=\int
_{0}^{\infty}{A(t)\ dt<1}$; otherwise, we can replace $U$ by $\frac{U}{\Vert
A\Vert_{1}+1}$. Firstly, let us note that \textbf{U1} and \textbf{U2} imply
\begin{align}
U(t,z)  &  =\int_{0}^{t}{\frac{\partial U}{\partial t}(r,z)\ dr}%
+U(0,z)\nonumber\\
&  \geq-\left(  \int_{0}^{t}{A(r)\ dr}\right)  U^{+}(0,z)+U(0,z)\nonumber\\
&  =\left(  1-\int_{0}^{t}{A(r)\ dr}\right)  U^{+}(0,z)-U^{-}%
(0,z)\label{eq:super-lineality}\\
&  \geq-U^{-}(0,z)\geq-c_{1}z-c_{2}z^{\alpha}. \label{eq:under-bound}%
\end{align}
Then, recalling that $\alpha>\frac{d}{d+2}$, it follows from
\eqref{eq:under-bound} that
\begin{align}
\mathcal{U}(t,\mu)  &  \geq-\left(  c_{1}+c_{2}\int_{\mathbb{R}^{d}}%
{\rho(x)^{\alpha}\ dx}\right) \nonumber\\
&  \geq-\left(  c_{1}+c_{2}\left(  \int_{\mathbb{R}^{d}}{(1+|x|^{2}%
)\rho(x)\ dx}\right)  ^{\alpha}\left(  \int_{\mathbb{R}^{d}}{\frac
{1}{(1+|x|^{2})^{\frac{\alpha}{1-\alpha}}}\ dx}\right)  ^{1-\alpha}\right)
\nonumber\\
&  =-\left(  c_{1}+c_{2}C_{\alpha}(1+M_{2}(\mu))^{\alpha}\right)  .
\label{eq:under-bound1}%
\end{align}
Therefore, the functional in \eqref{eq:general-internal-energy} is
well-defined from $[0,+\infty)\times\mathscr{P}_{2}(\mathbb{R}^{d})$ to
$(-\infty,+\infty]$. It follows from \eqref{eq:super-lineality} that
$U(t,\cdot)$ has a superlinear growth, for each fixed $t\geq0$. So, by
standard arguments (see \cite{McCann}), one can show that the functional
$\mathcal{U}(t,\cdot)$ is lower semicontinuous with respect to the weak
topology, for each fixed $t\geq0$. Thus, $\mathcal{U}(t,\cdot)$ verifies
\textbf{E1}.

Let $\mu\in\mathscr{P}_{2,ac}(\mathbb{R}^{d})$ be such that $d\mu=\rho dx$ and
$\mathcal{U}(0,\mu)<\infty$. We have
\[
U(t,\rho(x))\leq\left(  \int_{0}^{t}{a(r)\ dr}\right)  U^{-}(0,\rho
(x))+U(0,\rho(x)),
\]
and then $\mathcal{U}(t,\mu)<\infty$ for all $t>0$. On the other hand, if
$d\mu=\rho dx$ is such that $\mathcal{U}(t,\mu)<+\infty$ for all $t>0$, then,
by substituting $z=\rho(x)$ in \eqref{eq:super-lineality}, we get%

\begin{equation}
\left(  1-\int_{0}^{t}{A(r)\ dr}\right)  U^{+}(0,\rho(x))-U^{-}(0,\rho(x))\leq
U(t,\rho(x)). \label{aux-internal-1}%
\end{equation}
It follows by integrating (\ref{aux-internal-1}) that $\mathcal{U}%
(0,\mu)<\infty.$ So, $\mathcal{U}(t,\mu)$ verifies \textbf{E2}.

Denote $\mathit{Dom}(\mathcal{U}(t,\cdot))=\mathbf{D}\subset\mathscr{P}_{2,ac}%
(\mathbb{R}^{d}).$ Since $U^{-}(0,0)=U^{+}(0,0)=0$, note that
\[
U(t,0)=\int_{0}^{t}{\frac{\partial U}{\partial t}(r,0)\ dr}=0,
\]
and then $\mathbf{D}$ is nonempty. For $s<t$ and $\mu\in\mathbf{D}$ with
$d\mu=\rho dx$, we have
\[
U(t,\rho(x))-U(s,\rho(x))\leq\left(  \int_{s}^{t}{a(r)\ dr}\right)  (c_{1}%
\rho(x)+c_{2}\rho(x)^{\alpha}).
\]
The same arguments used in \eqref{eq:under-bound1} lead us to
\begin{equation}
\mathcal{U}(t,\mu)-\mathcal{U}(s,\mu)\leq\left(  \int_{s}^{t}{a(r)\ dr}%
\right)  \left(  c_{1}+c_{2}C_{\alpha}(1+M_{2}(\mu))\right)  \text{, for all
}0\leq s<t. \label{eq:otto-inequality-generalized}%
\end{equation}
We are going to use \eqref{eq:otto-inequality-generalized} as a substitute for
the condition \textbf{E3}. Also, \textbf{E4 }follows from
\eqref{eq:otto-inequality-generalized}. In fact,
\[
\mathcal{U}(t,\mu)+\frac{\mathbf{d}_{2}^{2}(\mu,\delta_{0})}{2\tau^{\ast}}%
\geq-(c_{1}+c_{2}C_{\alpha}(1+M_{2}(\mu))^{\alpha})+\frac{M_{2}(\mu)}%
{2\tau^{\ast}}.
\]
Now, it is easy to see that for $\tau^{\ast}>0$ small enough the last
expression is bounded from below, as desired. Note that we have \textbf{E5}
with $\lambda\equiv0$ because \textbf{U3} implies that $\mathcal{U}(t,\cdot)$
is convex along of generalized geodesics.

Let us remark that Lemmas \ref{lemma:existence-minimizer} and
\ref{lem:continuity-minimizer} can be proved by proceeding as in Section
\ref{sect:construc-and-propert} (and using \eqref{eq:super-lineality} for
Lemma \ref{lem:continuity-minimizer}). In order to recover the
differentiability property in Proposition
\ref{prop:differentiability-property1}, we recall the notation
\[
\mathscr{E}_{t,\tau}(\mu)=\inf_{\nu\in\mathscr{P}_{2}(\mathbb{R}^{d})}\left\{
\frac{\mathbf{d}_{2}^{2}(\mu,\nu)}{2\tau}+\mathcal{U}(t,\nu)\right\}
=\frac{\mathbf{d}_{2}^{2}(\mu,\mu_{\tau}^{t})}{2\tau}+\mathcal{U}(t,\mu_{\tau
}^{t}).
\]
Then, by taking $\tau_{0}<\tau_{1}$ and $\mu\in\mathbf{D,}$ we have
\begin{align}
\mathscr{E}_{t+\tau_{1},\tau_{1}}(\mu)-\mathscr{E}_{t+\tau_{0},\tau_{0}}(\mu)
&  \leq\mathcal{U}(t+\tau_{1},\mu_{\tau_{0}}^{t+\tau_{0}})-\mathcal{U}%
(t+\tau_{0},\mu_{\tau_{0}}^{t+\tau_{0}})\nonumber\\
&  \ +\frac{\tau_{0}-\tau_{1}}{2\tau_{0}\tau_{1}}\mathbf{d}_{2}^{2}(\mu
,\mu_{\tau_{0}}^{t+\tau_{0}}). \label{eq:est-dif-intern-en1}%
\end{align}
Denoting $d\mu_{\tau}^{t+\tau}=\rho_{\tau}^{t+\tau}dx,$ we can estimate
\begin{align*}
\mathcal{U}(t+\tau_{1},\mu_{\tau_{0}}^{t+\tau_{0}})-\mathcal{U}(t+\tau_{0}%
,\mu_{\tau_{0}}^{t+\tau_{0}})  &  =\int_{\mathbb{R}^{d}}{\int_{t+\tau_{0}%
}^{t+\tau_{1}}{\frac{\partial U}{\partial r}(r,\rho_{\tau_{0}}^{t+\tau_{0}%
}(x))\ dr}\ dx}\\
&  \leq\int_{t+\tau_{0}}^{t+\tau_{1}}{a(r)\ dr}\int_{\mathbb{R}^{d}}%
{U^{-}(0,\rho_{\tau_{0}}^{t+\tau_{0}}(x))\ dx}.
\end{align*}
The last integral on $\mathbb{R}^{d}$ is uniformly bounded in $\tau_{0}$ on
compact sets of $(0,\tau^{\ast}]$. Replacing the roles of $\tau_{0}$ and
$\tau_{1},$ we get
\begin{align}
\mathscr{E}_{t+\tau_{1},\tau_{1}}(\mu)-\mathscr{E}_{t+\tau_{0},\tau_{0}}(\mu)
&  \geq\mathcal{U}(t+\tau_{1},\mu_{\tau_{1}}^{t+\tau_{1}})-\mathcal{U}%
(t+\tau_{0},\mu_{\tau_{1}}^{t+\tau_{1}})\nonumber\\
&  \ +\frac{\tau_{0}-\tau_{1}}{2\tau_{0}\tau_{1}}\mathbf{d}_{2}^{2}(\mu
,\mu_{\tau_{2}}^{t+\tau_{1}}) \label{eq:est-dif-intern-en2}%
\end{align}
and
\[
\mathcal{U}(t+\tau_{1},\mu_{\tau_{1}}^{t+\tau_{1}})-\mathcal{U}(t+\tau_{0}%
,\mu_{\tau_{1}}^{t+\tau_{1}})\geq-\int_{t+\tau_{0}}^{t+\tau^{1}}{A(r)\ dr}%
\int_{\mathbb{R}^{d}}{U^{+}(0,\rho_{\tau_{1}}^{t+\tau_{1}})\ dx.}%
\]
Now we need a uniform estimate for $\int_{\mathbb{R}^{d}}{U^{+}(0,\rho
_{\tau_{1}}^{t+\tau_{1}}(x))dx}$. Substituting $t=t+\tau_{1}$ and
$z=\rho_{\tau_{1}}^{t+\tau_{1}}(x)$ in \eqref{eq:super-lineality}, and
afterwards integrating it, we arrive at
\begin{equation}
\left(  1-\Vert A\Vert_{1}\right)  \int_{\mathbb{R}^{d}}{U^{+}(0,\rho
_{\tau_{1}}^{t+\tau_{1}}(x))\ dx}\leq\int_{\mathbb{R}^{d}}{U^{-}(0,\rho
_{\tau_{1}}^{t+\tau_{1}}(x))\ dx}+\mathscr{E}_{t+\tau_{1},\tau_{1}}(\mu).
\label{eq:est-dif-intern-en3}%
\end{equation}
The first term in the right hand side of \eqref{eq:est-dif-intern-en3} is locally
uniformly bounded in $(0,\tau^{\ast}]$. By the continuity of the map
$\tau\rightarrow\mathscr{E}_{t+\tau,\tau}(\mu),$ the second term also verifies
so. Therefore, we conclude that the function $\tau\rightarrow
\mathscr{E}_{t+\tau,\tau}(\mu)$ is absolutely continuous in each compact
subinterval of $(0,\tau^{\ast}]$. Now a version of the dominated convergence
theorem leads us to the formula

\begin{equation}
\frac{d}{d\tau}\mathscr{E}_{t+\tau,\tau}(\mu)=\int_{\mathbb{R}^{d}}%
{\frac{\partial U}{\partial r}(t+\tau,\rho_{\tau}^{t+\tau}(x))\ dx}%
-\frac{\mathbf{d}_{2}^{2}(\mu,\mu_{\tau}^{t+\tau})}{2\tau^{2}},
\label{eq:formula-deriv-internal-energy}%
\end{equation}
for each differentiability point $\tau\in(0,\tau^{\ast}].$ The identity above
implies the integral equality (\ref{eq:integral-identity}) in Corollary
\ref{cor:integral-identity}.

In the sequel, we sketch the proof of Lemma \ref{lemma:limit-sec-discret} in
the case of this present section. Recalling the notation for the discrete
solution in \eqref{eq:minimization-problem}, we have
\begin{align*}
\frac{1}{2}(M_{2}(U_{\boldsymbol{\tau}}^{n})-M_{2}(U_{\boldsymbol{\tau}}%
^{0}))  &  \leq\sum_{j=1}^{n}\frac{1}{2}(M_{2}(U_{\boldsymbol{\tau}}%
^{j})-M_{2}(U_{\boldsymbol{\tau}}^{j-1}))\\
&  \leq\sum_{j=1}^{n}\frac{1}{2}(\tau^{\ast}\frac{\mathbf{d}_{2}%
^{2}(U_{\boldsymbol{\tau}}^{j},U_{\boldsymbol{\tau}}^{j-1})}{2\tau_{j}}%
+2\tau_{j}\frac{M_{2}(U_{\boldsymbol{\tau}}^{j})}{\tau^{\ast}})\\
&  \leq\frac{\tau^{\ast}}{2}(\mathcal{U}(0,U_{\boldsymbol{\tau}}%
^{0})-\mathcal{U}(t_{\boldsymbol{\tau}}^{n},U_{\boldsymbol{\tau}}^{n}%
))+\sum_{j=1}^{n}\frac{\tau_{j}}{\tau^{\ast}}M_{2}(U_{\boldsymbol{\tau}}%
^{j})\\
&  \ +\frac{\tau^{\ast}}{2}\sum_{j=1}^{n}(\mathcal{U}(t_{\boldsymbol{\tau}}%
^{j},U_{\boldsymbol{\tau}}^{j-1})-\mathcal{U}(t_{\boldsymbol{\tau}}%
^{j-1},U_{\boldsymbol{\tau}}^{j-1})).
\end{align*}
Using the above estimate and \eqref{eq:otto-inequality-generalized}, we can
proceed as in Lemma \ref{lemma:limit-sec-discret} and reobtain the conclusions
of this lemma for the functional $\mathcal{U}(t,\mu).$

Now we deal with the convergence of the approximate solutions. In comparison
with subsection 4.2, there is only a new term that reads as%
\[
\int_{0}^{t}{(1-l_{\boldsymbol{\tau}}(t))\left[  \mathcal{U}(\mathcal{T}%
_{\boldsymbol{\tau}}(t),\underline{U}_{\boldsymbol{\tau}}(t))-\mathcal{U}%
(\mathcal{T}_{\boldsymbol{\eta}}(t),\underline{U}_{\boldsymbol{\tau}}%
(t))\right]  \ dt}.
\]
Notice that it is necessary to consider only the case $\mathcal{T}%
_{\boldsymbol{\tau}}(t)<\mathcal{T}_{\boldsymbol{\eta}}(t)$. So, we have that
\begin{equation}
\mathcal{U}(\mathcal{T}_{\boldsymbol{\tau}}(t),\underline{U}%
_{\boldsymbol{\tau}}(t))-\mathcal{U}(\mathcal{T}_{\boldsymbol{\eta}}%
(t),\underline{U}_{\boldsymbol{\tau}}(t))\leq\left(  \int_{\mathcal{T}%
_{\boldsymbol{\tau}}(t)}^{\mathcal{T}_{\boldsymbol{\eta}}(t)}{A(r)\ dr}%
\right)  \int_{\mathbb{R}^{d}}{U^{+}(0,\underline{U}_{\boldsymbol{\tau}}%
(t,x))\ dx}. \label{aux-discrete-1}%
\end{equation}
Using \eqref{eq:super-lineality} we can estimate the integral over
$\mathbb{R}^{d}$ in (\ref{aux-discrete-1}) locally uniformly in $[0,\infty)$.
Now, by replacing the function $\beta$ by $a$ or $A,$ one can repeat the same
arguments in the proof of Proposition \ref{prop:residuo-G}, obtain the
estimate (\ref{eq:proposition-est-G}) and afterwards the convergence of the
approximate solutions \eqref{eq:sol-interpolante1} to a curve $\mu
:[0,\infty)\rightarrow\mathscr{P}_{2}(\mathbb{R}^{d})$.

In what follows, we sketch the main arguments for the convergence of the De Giorgi
interpolation (see Proposition \ref{prop:DiGiorgi-interp-converg}).

Let $\delta=t-t_{\boldsymbol{\tau}}^{n-1}$, for $t\in(t_{\boldsymbol{\tau}}%
^{n-1},t_{\boldsymbol{\tau}}^{n}]$. Using the minimizer property of
$\widetilde{U}_{\boldsymbol{\tau}}$ and $\overline{U}_{\boldsymbol{\tau}}$, we
can obtain
\begin{align}
\frac{\tau_{n}-\delta}{2\tau_{n}\delta}\mathbf{d}_{2}^{2}(\underline
{U}_{\boldsymbol{\tau}}(t),  &  \widetilde{U}_{\boldsymbol{\tau}}%
(t))+\mathcal{U}(t,\widetilde{U}_{\boldsymbol{\tau}}(t))-\mathcal{U}%
(t_{\boldsymbol{\tau}}^{n},\widetilde{U}_{\boldsymbol{\tau}}(t))\nonumber\\
&  \leq\frac{\tau_{n}-\delta}{2\tau_{n}\delta}\mathbf{d}_{2}^{2}(\underline
{U}_{\boldsymbol{\tau}}(t),\overline{U}_{\boldsymbol{\tau}}(t))+\mathcal{U}%
(t,\overline{U}_{\boldsymbol{\tau}}(t))-\mathcal{U}(t_{\boldsymbol{\tau}}%
^{n},\overline{U}_{\boldsymbol{\tau}}(t)), \label{aux-gen-1}%
\end{align}
and so
\begin{align}
\frac{\tau_{n}-\delta}{2\tau_{n}\delta}\mathbf{d}_{2}^{2}(\underline
{U}_{\boldsymbol{\tau}}(t),\widetilde{U}_{\boldsymbol{\tau}}(t))  &  \leq
\frac{\tau_{n}-\delta}{2\tau_{n}\delta}\mathbf{d}_{2}^{2}(\underline
{U}_{\boldsymbol{\tau}}(t),\overline{U}_{\boldsymbol{\tau}}(t))\nonumber\\
&  \ +\int_{t}^{t_{\boldsymbol{\tau}}^{n}}{a(r)\ dr}\int_{\mathbb{R}^{d}%
}{U^{-}(0,\widetilde{U}_{\boldsymbol{\tau}}(t,x))\ dx}\nonumber\\
&  \ +\int_{t}^{t_{\boldsymbol{\tau}}^{n}}{A(r)\ dr}\int_{\mathbb{R}^{d}%
}{U^{+}(0,\overline{U}_{\boldsymbol{\tau}}(t,x))\ dx}. \label{aux-gen-3}%
\end{align}
By using these estimates, one can obtain the conclusions of Proposition
\ref{prop:DiGiorgi-interp-converg}. In order to recover some properties in Theorem
\ref{theor:sol-flux-grad}, note that the inequality $\geq$ in
\eqref{eq:energy-identity} follows from the same arguments and Fatou's Lemma. The absolutely continuity of the map $t\rightarrow\mathcal{U}(t,\mu(t))$ in
each bounded interval of $[0,\infty)$ is a consequence of
\eqref{eq:formula-inclinacao} with $\lambda(t)\equiv0$ and the estimate
(\ref{eq:super-lineality}).

Although we are not able to obtain the reverse inequality $\leq$ in
\eqref{eq:energy-identity}, and so the energy identity
\eqref{eq:energy-identity} (see Remark \ref{remark:flexibility-2} below), it is not hard to show the estimate
\begin{align*}
\mathcal{U}(t,\mu(t))-\mathcal{U}(s,\mu(s))  &  \leq\int_{s}^{t}%
{\int_{\mathbb{R}^{d}}{\frac{\partial U}{\partial t}(r,\mu(r))\ dx}\ dr}%
-\frac{1}{2}\int_{s}^{t}{|\mu^{\prime}|^{2}(r)\ dr}\\
&  \ -\frac{1}{2}\int_{s}^{t}{|\partial\mathcal{U}(r)|^{2}(\mu(r))\ dr}.
\end{align*}

Even without the energy identity and minimal selection, it is possible to show
that the approximate solutions converge to a distributional solution $\mu$.
Denote $P(t,z)=z\frac{\partial U}{\partial z}(t,z)-U(t,z)$. We summarize the
results for the functional (\ref{eq:general-internal-energy}) in the theorem below.

\begin{teor}
\label{teor:internal energy} Consider the internal energy functional defined
in \eqref{eq:general-internal-energy} and assume \textbf{U1} to \textbf{U3}.
Then the curve $\mu:[0,\infty)\rightarrow\mathscr{P}_{2}(\mathbb{R}^{d}),$
$d\mu_{t}=\rho(x,t)dx$, given by Theorem \ref{teor:convergence-scheme-discret}
is a distributional solution of the equation
\begin{equation}
\partial_{t}\rho-\nabla_{x}\cdot\left(  \nabla_{x}P(t,\rho(t,x))\right)  =0
\label{eq:difussion-general}%
\end{equation}
with initial condition $\mu_{0}\in\mathscr{P}_{2}(\mathbb{R}^{d})$. Moreover,
the contraction property (\ref{eq:contraction-property}) holds.
\end{teor}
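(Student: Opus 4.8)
The plan is to verify the two assertions separately: that $\mu$ solves \eqref{eq:difussion-general} distributionally, and the contraction property. First I would record that the adaptations carried out in this subsection make all of Sections \ref{sect:construc-and-propert}--\ref{sect:regularity} available for the functional \eqref{eq:general-internal-energy} with the choice $\lambda\equiv0$; in particular Theorem \ref{teor:convergence-scheme-discret} (together with Remark \ref{remark:convergence-in-Dbar}, noting that $\mathbf{D}$ is dense in $\mathscr{P}_{2}(\mathbb{R}^{d})$) yields a locally absolutely continuous curve $\mu:[0,\infty)\to(\mathscr{P}_{2}(\mathbb{R}^{d}),\mathbf{d}_{2})$ with $\mu(0)=\mu_{0}$, $|\mu'|\in L_{loc}^{2}([0,\infty))$, and $t\mapsto\mathcal{U}(t,\mu(t))$ locally bounded. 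Since $\mathcal{U}(t,\cdot)\equiv+\infty$ on singular measures, each $\mu_{t}$ is absolutely continuous, $\mu_{t}=\rho(t,\cdot)\,dx$. By the theory of the continuity equation in $\mathscr{P}_{2}$ (see \cite{Ambrosio}) there is a Borel vector field $v:(0,\infty)\times\mathbb{R}^{d}\to\mathbb{R}^{d}$ with $v_{t}\in L^{2}(\mu_{t};\mathbb{R}^{d})$, $\Vert v_{t}\Vert_{L^{2}(\mu_{t})}=|\mu'|(t)$ for a.e.\ $t$, and $\partial_{t}\mu_{t}+\nabla_{x}\cdot(v_{t}\mu_{t})=0$ in the distributional sense.

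Next I would identify $v_{t}$. The one-sided energy estimate established above, $\mathcal{U}(t,\mu(t))-\mathcal{U}(s,\mu(s))\le\int_{s}^{t}\!\int_{\mathbb{R}^{d}}\partial_{t}U(r,\rho_{r})\,dx\,dr-\tfrac12\int_{s}^{t}|\mu'|^{2}(r)\,dr-\tfrac12\int_{s}^{t}|\partial\mathcal{U}(r)|^{2}(\mu(r))\,dr$, shows in particular that $|\partial\mathcal{U}(r)|(\mu_{r})<\infty$ for a.e.\ $r$. Because \textbf{U3} is McCann's displacement-convexity condition and \textbf{U1}--\textbf{U2} control the growth, for each fixed $r$ the functional $\mathcal{U}(r,\cdot)$ is convex along generalized geodesics (this is exactly how \textbf{E5} was obtained) and its metric slope obeys the Otto--McCann formula: whenever $|\partial\mathcal{U}(r)|(\mu_{r})<\infty$ one has $\rho_{r}\in W_{loc}^{1,1}(\mathbb{R}^{d})$, $\nabla_{x}P(r,\rho_{r})\in L_{loc}^{1}$ with $P(r,z)=z\,\partial_{z}U(r,z)-U(r,z)$, and, setting $w_{r}:=\nabla_{x}P(r,\rho_{r})/\rho_{r}$, $|\partial\mathcal{U}(r)|^{2}(\mu_{r})=\int|w_{r}|^{2}\,d\mu_{r}$ with $w_{r}$ the element of minimal norm of the Wasserstein subdifferential of $\mathcal{U}(r,\cdot)$ at $\mu_{r}$ (see \cite{Ambrosio}). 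Since $t\mapsto\mathcal{U}(t,\mu(t))$ is absolutely continuous, the Wasserstein chain rule for $\lambda$-convex functionals together with a Leibniz-type argument for the explicit time dependence (cf.\ Lemma \ref{lem:villani}) gives $\frac{d}{dt}\mathcal{U}(t,\mu(t))=\int_{\mathbb{R}^{d}}\partial_{t}U(t,\rho_{t})\,dx+\langle w_{t},v_{t}\rangle_{L^{2}(\mu_{t})}$ for a.e.\ $t$; integrating on $(s,t)$ and subtracting the energy estimate above yields $\int_{s}^{t}\tfrac12\Vert v_{r}+w_{r}\Vert_{L^{2}(\mu_{r})}^{2}\,dr\le0$, hence $v_{r}=-w_{r}$ a.e. Substituting into the continuity equation gives $\partial_{t}\rho-\nabla_{x}\cdot(\nabla_{x}P(t,\rho))=0$ in $\mathcal{D}'((0,\infty)\times\mathbb{R}^{d})$, i.e.\ \eqref{eq:difussion-general}, while $\mathbf{d}_{2}(\mu(t),\mu_{0})\to0$ as $t\to0^{+}$ gives the initial datum. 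Equivalently one may avoid the energy estimate altogether and pass to the limit directly in the discrete optimality relations $\tau_{n}^{-1}(\mathrm{id}-\mathbf{t}_{U_{\boldsymbol{\tau}}^{n}}^{U_{\boldsymbol{\tau}}^{n-1}})=-\nabla_{x}P(t_{\boldsymbol{\tau}}^{n},\rho_{\boldsymbol{\tau}}^{n})/\rho_{\boldsymbol{\tau}}^{n}$; this is the route that needs neither the energy identity nor a measurable selection. Finally, since $\lambda\equiv0$ is continuous, \textbf{E6} holds and the contraction \eqref{eq:contraction-property}, already available for $\mathcal{U}$ by the adaptations of Section \ref{sect:regularity}, reads $\mathbf{d}_{2}(\mu_{1}(t),\mu_{2}(t))\le\mathbf{d}_{2}(\mu_{1}(0),\mu_{2}(0))$.

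The main obstacle is the slope step together with the passage to the limit behind the energy estimate. On one hand, one must run the time-independent McCann--Otto argument for the $t$-dependent density $U(t,\cdot)$, deducing $\rho_{r}\in W_{loc}^{1,1}$ and $\nabla_{x}P(r,\rho_{r})/\rho_{r}\in L^{2}(\mu_{r})$ from finiteness of $|\partial\mathcal{U}(r)|(\mu_{r})$ and checking that $w_{r}$ can be chosen Borel in $r$; on the other hand, establishing the one-sided energy estimate requires a Fatou-type limit in the discrete identity in which \textbf{U1}'s two-sided bound on $\partial_{t}U$ and the uniform $L^{1}$-control of $U^{+}(0,\rho_{\boldsymbol{\tau}})$ coming from \eqref{eq:super-lineality} play the decisive role. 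Everything else is a matter of assembling pieces already proved above.
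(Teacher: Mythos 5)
Your main argument has a genuine gap at the velocity-identification step. You identify $v_{t}=-w_{t}$ by combining a one-sided energy estimate with the chain rule
\begin{equation*}
\frac{d}{dt}\mathcal{U}(t,\mu(t))=\int_{\mathbb{R}^{d}}{\partial_{t}U(t,\rho_{t})\ dx}+\langle w_{t},v_{t}\rangle_{L^{2}(\mu_{t})},
\end{equation*}
but proving this chain rule requires exactly the limit \eqref{aux-monge}, i.e. a condition of the type \eqref{eq:lower-semicon-of-derivative} for the functional \eqref{eq:general-internal-energy}. For a general density $U(t,\cdot)$ satisfying only \textbf{U1}--\textbf{U3} this amounts to a stability statement for optimal maps (after the change of variables of Villani) which is tied to second-order stability for the Monge--Amp\`ere equation, and Remark \ref{remark:flexibility-2} states explicitly that the reverse inequality in \eqref{eq:energy-identity} --- equivalently the conclusion $v_{r}=-w_{r}$ of your argument --- is \emph{not} obtained in this generality. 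In the same vein, your appeal to the Otto--McCann slope formula and the minimal selection $w_{r}=\nabla_{x}P(r,\rho_{r})/\rho_{r}$ in the Wasserstein subdifferential is not justified under \textbf{U1}--\textbf{U3} alone; the characterization of the subdifferential of an internal energy in \cite{Ambrosio} uses additional structure (e.g. a doubling-type condition), and the point of the theorem is precisely that the conclusion can be reached ``without the energy identity and minimal selection''.

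The one-sentence aside in your proposal --- passing to the limit directly in the discrete optimality conditions --- is in fact the correct route and is the proof given in the paper, but it cannot be left as an afterthought: one perturbs the minimizer $U_{\boldsymbol{\tau}}^{n}$ along the flow $\Phi_{\delta}$ of a field $\xi=\nabla\zeta$ with $\zeta\in C_{0}^{\infty}$, uses the minimality to derive the weak discrete Euler--Lagrange identity \eqref{eq:cont-aprox-internal} (no pointwise identity for a minimal selection is needed, only the integrated relation against $\nabla\zeta$), observes that $P(n\tau,U_{\tau}^{n})\in W^{1,2}(\mathbb{R}^{d})$ with bounds uniform in $\tau$, and then passes to the limit using the a priori estimates of Lemma \ref{lemma:limit-sec-discret} and weak convergence. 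Your treatment of the contraction property is fine: \textbf{E5} holds with $\lambda\equiv0$, so \textbf{E6} is automatic and \eqref{eq:contraction-property} gives $\mathbf{d}_{2}(\mu_{1}(t),\mu_{2}(t))\leq\mathbf{d}_{2}(\mu_{1}(0),\mu_{2}(0))$. But as written, the core of your proof rests on a chain rule and a slope characterization that are unavailable in this setting, so the argument does not go through without replacing it by the discrete Euler--Lagrange limit argument.
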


\begin{proof}
For simplicity take a uniform step size $\tau>0$, consider the partition $\{0<\tau<2\tau<3\tau<\cdots\}$, and choose
$\xi\in C^{\infty}_0(\mathbb{R}^{d} ;\mathbb{R}^d)$. Consider also the flow $\Phi_{\delta}$ associated to the field $\xi$, i.e.
\begin{equation}
(\Phi_{\delta})'=\xi(\Phi).\label{aux-ineq-1}
\end{equation}
Then, by the minimizer property of $U_{\tau}^n$, we have
\begin{eqnarray}
\mathcal{U}(n\tau,U_{\delta})+\frac{\mathbf{d}_2^2(U_{\tau}^{n-1},U_{\delta})}{\tau}-\mathcal{U}(n\tau,U_{\tau}^n)
-\frac{\mathbf{d}_2^2(U_{\tau}^{n-1},U_{\tau}^n)}{\tau}\geq0,\label{aux-ineq-2}
\end{eqnarray}
where $U_{\delta}=\Phi_{\delta}\#U_{\tau}^n$ is the push-forward of $U_{\tau}^n$ via $\Phi_{\delta}$. Then, by standard arguments (see e.g. \cite{Jordan-Otto}), it follows that
\begin{equation*}
\lim_{\delta\to0}\frac{\mathcal{U}(n\tau,U_{\delta})-\mathcal{U}(n\tau,U_{\tau}^n)}{\delta}=\int_{\mathbb{R}^d}
{-P(n\tau,U_{\tau}^n(x))\text{div}\xi\ dx}
\end{equation*}
and
\begin{equation*}
\lim_{\delta\to0}\tau^{-1}\frac{\mathbf{d}_2^2(U_{\tau}^{n-1},U_{\delta})-\mathbf{d}_2^2(U_{\tau}^{n-1},U_{\tau}^n)}{\delta}
=\int_{\mathbb{R}^d\times\mathbb{R}^d}{\frac{(x-y)}{\tau}\cdot \xi(y)\ d\gamma(x,y)},
\end{equation*}
where $\gamma\in\Gamma(U_{\tau}^{n-1},U_{\tau}^{n})$ is an optimal plane for the transport from $U_{\tau}^{n-1}$ to
$U_{\tau}^{n}$. Changing $\xi$ by $-\xi$ in \eqref{aux-ineq-1} (by symmetry in \eqref{aux-ineq-2}) and taking $\xi=\nabla\zeta$, we obtain that
\begin{equation}
\label{eq:cont-aprox-internal}
\int_{\mathbb{R}^d\times\mathbb{R}^d}{\frac{(x-y)}{\tau}\cdot \nabla\zeta(y)\ d\gamma(x,y)}-\int_{\mathbb{R}^d}{P(n\tau,
U_{\tau}^n(x))\Delta\zeta\ dx}=0.
\end{equation}
Let us remark that the above calculations also allow to conclude that $P(n\tau,U_{\tau}^n)\in W^{1,2}(\mathbb{R}^d)$
is bounded uniformly. Thus, we can use an argument of weak convergence and estimates as in Lemma \ref{lemma:limit-sec-discret} in order to obtain that the curve $\mu:[0,\infty)\to\mathscr{P}_2(\mathbb{R}^d)$ solves \eqref{eq:difussion-general} in the distributional
sense.
\end{proof}

\begin{rem}
\label{remark:flexibility} The conditions \textbf{U1} to \textbf{U3} work well
if we consider a functional as being the sum of the internal energy and
another functional as in two previous subsections. In the present subsection,
we have preferred to consider only the internal energy for the sake of simplicity.
\end{rem}

\begin{rem}
\label{remark:flexibility-2} Let us observe that the energy identity was not
obtained in Theorem \ref{teor:internal energy}. The reason is that, in order
to obtain such property in this general case, it would be necessary to handle
the limit
\begin{equation}
\lim_{t\rightarrow t_{0}}\frac{\mathcal{U}(t,\mu(t))-\mathcal{U}(t_{0}%
,\mu(t))}{t-t_{0}}. \label{aux-monge}%
\end{equation}
By making a change of variable (see \cite[Theorem 4.8]{Villani}), the calculus
of \eqref{aux-monge} is related to stability results for the Monge-Amp\`{e}re
equation. However, as far as we know, such results are available in the
literature (see \cite{DePhili-Fig-2}) under restrictions stronger than the
ones that we have in our context.
\end{rem}

\end{document}